\newtheorem{theorem}{Theorem}[section]
\newtheorem{lemma}[theorem]{Lemma}
\newtheorem{corollary}[theorem]{Corollary}
\newtheorem{proposition}[theorem]{Proposition}
\newtheorem{assumption}[theorem]{Assumption}
\theoremstyle{definition}
\newtheorem{definition}[theorem]{Definition}
\theoremstyle{remark}
\newtheorem{remark}[theorem]{Remark}
\numberwithin{equation}{section}
\def\t{\tau}
\def\G{\Gamma}
\def\O{\Omega}
\newcommand{\reals}{\mathbb{R}}
\newcommand{\norm}[1]{\left\|#1\right\|}
\author[Y. Guo]{Yanqiu Guo}
\address{Department of Mathematics \& Statistics \\Florida International University\\
Miami, Florida 33199, USA} \email{yanguo@fiu.edu}
\author[M. A. Rammaha]{Mohammad A. Rammaha}
\address{Department of
Mathematics \\ University of Nebraska-Lincoln \\
Lincoln, NE  68588-0130, USA} \email{mrammaha1@unl.edu}
\author[S. Sakuntasathien]{Sawanya Sakuntasathien}
\address{Department of Mathematics \\ Faculty of Science \\ Silpakorn University  \\
Nakhonpathom, 73000, Thailand} \email{sakuntasathien\_s@silpakorn.edu}
\title[Energy decay of a viscoelastic wave equation]
{Energy decay of a viscoelastic wave equation with supercritical nonlinearities}
\date{January 7, 2018}
\keywords{nonlinear waves, viscoelasticity, memory, source, damping, supercritical, energy decay}
\subjclass[2010]{35L05, 35L10, 35L71, 35B35, 35B40}
\begin{document}

\maketitle

\begin{abstract}
This paper presents a study of the asymptotic behavior of the solutions for the history value problem of a viscoelastic wave equation which features a fading memory term as well as a supercritical source term and a frictional damping term:
\begin{align*}
\begin{cases}
u_{tt}- k(0) \Delta u -  \int_0^{\infty} k'(s) \Delta u(t-s) ds +|u_t|^{m-1}u_t =|u|^{p-1}u,   \text{\;\;\;\;in\;\;} \O \times (0,T), \\
u(x,t)=u_0(x,t), \quad \text{ in } \O \times (-\infty,0],
\end{cases}
\end{align*}
where $\O$ is a bounded domain in $\mathbb R^3$ with a Dirichl\'et boundary condition and $u_0$ represents the history value. A suitable notion of a potential well is introduced for the system, and  global existence of solutions is justified provided that the history value $u_0$ is taken from a subset of the potential well. Also, uniform energy decay rate is obtained which depends on the relaxation kernel $-k'(s)$ as well as the growth rate of the damping term. This manuscript complements our previous work \cite{GRSTT,GRS} where Hadamard well-posedness and the singularity formulation have been studied for the system. It is worth stressing the special features of the model, namely, the source term here has a supercritical growth rate and the memory term accounts to the full past history that goes back to $-\infty$.
\end{abstract}

\section {Introduction}\label{S1}
\subsection{The model}

Viscoelasticity is the property of materials that exhibit both viscous and elastic characteristics when undergoing deformation. 
Skin tissue is viscoelastic, which can be seen by pinching the skin and it takes time to recover back to its original flat position.
Synthetic polymers, wood, as well as metals at high temperature, display significant viscoelastic effects. In the nineteenth century, Boltzmann discovered that the behavior of viscoelastic materials should be modeled through constitutive relations that involve long but fading memory. In this paper, we study a system that models wave propagation in a viscoelastic medium under the influence of a frictional damping and an energy-amplifying strong source.

In particular, the model under investigation can be represented by a hyperbolic partial differential equation: 
\begin{align} \label{1.1}
\begin{cases}
u_{tt}- k(0) \Delta u -  \int_0^{\infty} k'(s) \Delta u(t-s) ds + |u_t|^{m-1}u_t=|u|^{p-1}u,\text{\;in\;} \O \times (0,T)\\
u(x,t)=0, \quad \text{ on }  \Gamma \times (-\infty,T) \\
u(x,t)=u_0(x,t), \quad \text{ in } \O \times (-\infty,0],
\end{cases}
\end{align}
where the unknown quantity $u(x,t)$ is a real-valued function defined on $\Omega \times (-\infty,T)$, and $\O\subset \reals^3$ is an open, bounded, and connected domain with a smooth boundary $\Gamma$. Notice that the history value of $u$ is assigned for all non-positive time $t\in (-\infty,0]$. The term $-\int_0^{\infty} k'(s) \Delta u(t-s) ds$ is a linear memory term that accounts to the full past history that goes back to $-\infty$. The kernel  $-k'(s)$ is often called the relaxation kernel, which characterizes the rate at which the memory fades. The term $|u_t|^{m-1}u_t$, $m\geq 1$ stands for a frictional damping that strongly dissipates energy, whereas the term $|u|^{p-1}u$, $p\in [1,6)$,  represents a nonlinear source of \emph{supercritical} growth rate which models an external force that amplifies energy and drives the system to possible instability. It is evident that the linear memory term, the damping term, and the source term are the three major players in the model. Their interactions stimulate many interesting phenomena which deserve careful investigation.

The present work relies on the rigorously justified Hadamard well-posedness result for (\ref{1.1}) in our previous paper \cite{GRSTT} coauthored with Titi and Toundykov. Moreover, in another recent paper \cite{GRS}, we have provided a delicate study on the formation of singularities  for (\ref{1.1}) under the scenario of both positive and negative initial energy. In order to have a more complete analysis of system (\ref{1.1}), in this paper we study the asymptotic behavior of global solutions in a potential well and obtain a uniform decay rate of energy.

Model (\ref{1.1}) describes wave propagation in a viscoelastic medium. Such phenomena are common in nature. For instance, in \cite{PZT} a viscoelastic wave model is introduced to describe acoustic signal propagation in a shallow sea, and it is mentioned in \cite{PZT} that the low-frequency acoustic waves propagate not only in the seawater, but also in the near-surface layers of the ocean bottom, together forming a geo-acoustic waveguide, so that the acoustic field is substantially influenced by the shear elasticity of sediments and rocks composing the bottom, as well as the sound attenuation in these materials.
On the other hand, as a consequence of the widespread use of polymers and other modern materials which exhibit stress relaxation, the theory of viscoelasticity has provided important applications in materials science and engineering. We refer the reader to \cite{Coleman, CN, FM, RHN} for mathematical foundation as well as applications for viscoelasticity.

We would like to point out that the topic of this work is within the field of mathematical control theory, an area of application-oriented mathematics that deals with the basic principles underlying the analysis and design of control systems. To control an object means to influence its behavior so as to achieve a desired goal. In our system (\ref{1.1}), intrinsic viscoelastic damping and external frictional damping mechanism acting on the system are responsible for  dissipation of its energy. 
The purpose of this line of study is to find conditions on initial state and determine certain amount of dissipation that are needed in order to obtain the optimal decay rate of the energy. In other words, the goal is to discover an adequate choice of the controls that can drive the system from a given initial state to a final given state, in a given time.

Let us briefly give an overview of some related results in the literature regarding the stability and the large time behavior for solutions to viscoelastic wave equations. The seminal papers \cite{Daf2,Daf1} by Dafermos were among earliest results on the asymptotic behavior of solutions to the equations of linear viscoelasticity at large time. Pata and Zucchi \cite{Pata} established the theory of finite-dimensional attractors for a damped hyperbolic equation with a linear memory. In an innovative work \cite{Cav5}, Cavalcanti and Oquendo studied the energy decay rate for a partially viscoelastic nonlinear wave equation subject to a nonlinear and localized frictional damping. Also, Berrimi and Messaoudi \cite{BM} examined the decay of solutions of a viscoelastic equation with a subcritical nonlinear source. We refer the reader to \cite{ACC,ACS,CCLW,DPZ,Fab1,LMM,LW} and the references therein for other related results.

The study of the interaction of nonlinear damping and source terms in wave equations was initiated by Georgiev and Todorova  \cite{G-T}. In this line of research, an important breakthrough was made by Bociu and Lasiecka in a series of papers \cite{BL3,BL2,BL1} where they provided a complete study of a wave equation with damping and supercritical sources in the interior and on the boundary of the domain. Indeed, a source term $|u|^{p-1} u$ is called subcritical if $1\leq p<3$, critical if $p=3$, and supercritical if $p>3$, in three space dimensions.  In the latter case  $p>3$, the source $u\mapsto |u|^{p-1} u$ is not locally Lipschitz from $H^1_0(\Omega)$ to $L^2(\Omega)$, which makes analysis of such problems more subtle. We mention here the references \cite{BGRT,BRT,GR,GR1,GR2,GRSTT,GRS,KR,PRT2,RW} that address hyperbolic equations influenced by supercritical sources.

In this paper, we follow the framework established by Lasiecka and Tataru \cite{LT} to study the asymptotic behavior of solutions to (\ref{1.1}). 
We also adopt important ideas from \cite{Cav5, LMM}. The novelty of our results can be seen from the following aspects.
\begin{itemize}
\item The source term in our model (\ref{1.1}) is supercritical, and the system is influenced by the full past history back to $-\infty$, unlike most related results in the literature which often dealt with subcritical sources and finite-time memories. Notice that, a viscoelastic model with a finite-time memory is typically given as an initial value problem, which is very different from our history value problem (\ref{1.1}).

\item We introduce a potential well in $L^2_{\mu}(\mathbb R^+,H^1_0(\Omega))$ which is particularly tailored for the history value problem (\ref{1.1}). 

\item Our proof for the energy decay rates is concise in the sense that we have successfully removed the compactness--uniqueness argument that was a common technique often used in the literature for the purpose of absorption of the lower-order terms (e.g., \cite{ACCRT,BRT,GR2,PP,PRT,RTW}).

\item In a few relevant papers (e.g., \cite{GR2,PP,PRT,RTW}), the energy decay estimate was conducted for data in a certain closed subset of the ``good" part of a potential well. Such restriction has been loosen in this work, so that the data admit a broader selection.  

\end{itemize}

\subsection{Summary of our previous results in \cite{GRSTT,GRS}}

We shall summarize the well-posedness and  blow-up results that we obtained for system (\ref{1.1}) in our papers \cite{GRSTT,GRS}.

Throughout, we denote
\begin{align*}
\mathbb R^+=[0,\infty),   \;\;\;\;\mathbb R&^-=(-\infty,0].
\end{align*}
Also, $C$ denotes a generic positive constant that may differ from line to line.

For the purpose of defining a proper function space for the history value $u_0$, we set $\mu(s)=-k'(s)$ and assume $k$ is monotone decreasing in $\mathbb R^+$. Thus, $\mu: \reals^+\longrightarrow \reals^+$, and in Assumption \ref{ass} precise assumptions on $\mu$ and $k$ will be imposed. We assume that the history value is represented by a function $u_0(x,t)$ defined for non-positive  times $t\in \mathbb R^-$.  In particular,  $u_0(x,t): \Omega\times \mathbb R^- \rightarrow \mathbb R$ belongs to a weighted Hilbert space $L^2_{\mu}(\mathbb R^-, H^1_0(\O))$, i.e.,
$$\|u_0\|^2_{L^2_{\mu}(\mathbb R^-, H^1_0(\O))}=\int_0^{\infty} \int_{\O} |\nabla u_0(x,-t)|^2 dx \mu(t) dt<\infty.$$
Also, we assume that $\partial_t u_0\in L^2_{\mu}(\reals^-,L^2(\O))$. Throughout, the standard $L^s(\O)$-norm will be denoted by:
\begin{align*}
\norm{u}_s=\norm{u}_{L^s(\O)}.
\end{align*}

The following assumptions will be imposed throughout the manuscript.
\begin{assumption} \label{ass}\leavevmode
\begin{itemize}
\item ~~ $m\geq 1$, $1\leq p<6$, $p\frac{m+1}{m}<6$;
\item ~~ $k\in C^2(\reals^+)$ such that $k'(s)<0$ for all $s>0$ and $k(\infty)=1$;
\item ~~ $\mu(s)=-k'(s)$ such that $\mu \in C^1(\reals^+)\cap L^1(\reals^+)$ and $\mu(s)> 0$, $\mu'(s)\leq 0$ for all $s>0$, and $\mu(\infty)=0$;
\item ~~ $u_0(x,t)\in L^2_{\mu}(\reals^-,H^1_0(\O))$,\, $\partial_t u_0(x,t)\in L^2_{\mu}(\reals^-,L^2(\O))$ and  such that\\ $u_0: \mathbb R^- \rightarrow H^1_0(\O)$ and
$\partial_t u_0(x,t): \mathbb R^- \rightarrow L^2(\O)$ are weakly continuous at $t=0$. In addition, for all $t\leq 0$, $u_0(x,t)=0$ on $\G$.
\end{itemize}
\end{assumption}

We define weak solutions of (\ref{1.1}) as follows.
\begin{definition} \label{def-weak}
A function $u(x,t)$ is said to be a \emph{weak solution} of (\ref{1.1}) defined on the time interval $(-\infty,T]$
provided $u\in C([0,T];H^1_0(\O))$ such that $u_t\in C([0,T];L^2(\O))\cap L^{m+1}(\O \times (0,T))$ with:
\begin{itemize}
\item $u(x,t)=u_0(x,t)\in L^2_{\mu}(\reals^-,H^1_0(\O))$ for $t\leq 0$;
\item The following variational identity holds for all $t\in [0,T]$  and all test functions $\phi \in \mathscr{F}$:
\begin{align} \label{weak}
&\int_{\O}u_t(t)\phi(t) dx -  \int_{\O}u_t(0)\phi(0) dx   -\int_0^t \int_{\O}u_t(\t)\phi_t(\t)dx d\t\notag\\
&+k(0)\int_0^t \int_{\O} \nabla u(\t) \cdot \nabla \phi(\t) dx d\t
+\int_0^t \int_0^{\infty} \int_{\O} \nabla u(\t-s) \cdot \nabla \phi(\t) dx k'(s) ds d\t\notag\\
&+\int_0^t \int_{\O} |u_t(\tau)|^{m-1}u_t(\tau)\phi(\t) dx d\t
=\int_0^t \int_{\O} |u(\tau)|^{p-1}u(\tau) \phi(\t) dx d\t,
\end{align}
where $$ \mathscr{F} = \Big\{ \phi: \,\, \phi \in C([0,T];H^1_0(\O))\cap L^{m+1}(\O \times (0,T)) \text{  with  } \phi_t\in C([0,T];L^2(\O)) \Big\}.$$
\end{itemize}
\end{definition}

For a solution $u(x,t)$, let us define the function 
\begin{align*}
w(x,t,s)=u(x,t)-u(x,t-s),\;\;\; s\geq 0.
\end{align*}
Also, we define the quadratic energy $\mathscr E(t)$ by:
\begin{align}\label{1.6}
\mathscr E(t)=\frac{1}{2}\left(\norm{u_t(t)}_2^2+\norm{\nabla u(t)}_2^2+\int_0^{\infty}\norm{\nabla w(t,s)}_2^2 \mu(s)ds \right).
\end{align}
\begin{remark}    \label{remk1}
In the definition (\ref{1.6}) of the quadratic energy $\mathscr E(t)$, the term $\frac{1}{2} \|u_t(t)\|_2^2$ represents the kinetic energy, whereas the term
$\frac{1}{2} \|\nabla u(t)\|_2^2$ is a potential energy. In addition, the term $\frac{1}{2} \int_0^{\infty} \|\nabla w(t,s)\|_2^2 \mu(s) ds$ can be also regarded as a potential energy due to the elastic property of the viscoelastic medium.   
\end{remark}

The following theorem from our paper \cite{GRSTT} is on the local existence and uniqueness of weak solutions of (\ref{1.1}).
\begin{theorem}[{\bf Short-time existence and uniqueness} \cite{GRSTT}] \label{thm-exist}
Assume the validity of Assumption \ref{ass}. Then, there exists a local (in time) weak solution $u$ to (\ref{1.1}) defined on the time interval $(-\infty,T]$ for some $T>0$ depending on the initial quadratic energy $\mathscr E(0)$. Furthermore, the following energy identity holds:
\begin{align} \label{EI-0}
&\mathscr E(t)+\int_0^t \int_{\O} |u_t|^{m+1} dx d\t-\frac{1}{2}\int_0^t \int_0^{\infty} \norm{\nabla w(\t, s)}_2^2 \mu'(s)ds d\t \notag\\
&=\mathscr E(0)+\int_0^t \int_{\O} |u|^{p-1}u u_t dx d\t.
\end{align}
If in addition we assume $u_0(0)\in L^{\frac{3(p-1)}{2}}(\O)$, then weak solutions of (\ref{1.1}) are unique.
\end{theorem}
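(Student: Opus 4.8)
The plan is to combine Dafermos's history framework with the $m$-accretive/monotone-operator method developed by Bociu and Lasiecka for wave equations with supercritical sources. First I would pass to the autonomous ``past-history'' formulation: introduce the relative displacement $w(x,t,s)=u(x,t)-u(x,t-s)$, which solves the transport equation $w_t+w_s=u_t(t)$ with $w(x,t,0)=0$ and initial history $w(x,0,s)=u_0(x,0)-u_0(x,-s)$. Writing $u(t-s)=u(t)-w(t,s)$ and integrating by parts in $s$ (using $k(\infty)=1$), the memory term turns into $-(1-k(0))\Delta u-\int_0^\infty\mu(s)\Delta w(t,s)\,ds$, so that \eqref{1.1} becomes the autonomous system
\[
u_{tt}-\Delta u-\int_0^\infty\mu(s)\Delta w(t,s)\,ds+|u_t|^{m-1}u_t=|u|^{p-1}u
\]
coupled with the transport equation for $w$, set in the phase space $\mathcal H=H^1_0(\O)\times L^2(\O)\times L^2_\mu(\reals^+,H^1_0(\O))$. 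The unbounded operator assembled from $-\Delta$, the transport generator $-\partial_s$ on the history component (with its natural domain), and the maximal monotone graph $v\mapsto|v|^{m-1}v$ is $m$-accretive on $\mathcal H$ after the usual shift; hence by Kato--Barbu theory the system with a \emph{globally Lipschitz} source has a unique generalized solution, strong for smooth data (a Faedo--Galerkin scheme with monotone-operator limits would serve equally well).

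To accommodate the genuinely supercritical source --- recall that for $p>3$ the map $u\mapsto|u|^{p-1}u$ is not locally Lipschitz from $H^1_0(\O)$ into $L^2(\O)$ --- the structural point, which uses $p\frac{m+1}{m}<6$, is that for $u\in C([0,T];H^1_0(\O))$ one has $|u|^{p-1}u\in C([0,T];L^{(m+1)/m}(\O))$; that is, the source lands precisely in the dual of the velocity space $L^{m+1}(\O\times(0,T))$, so it is an admissible forcing for the dissipative dynamics above. I would then truncate the source to a locally Lipschitz map, run a Banach fixed-point argument on a small ball of $\{u\in C([0,T];H^1_0(\O)):u_t\in C([0,T];L^2(\O))\cap L^{m+1}(\O\times(0,T))\}$ --- the contraction estimate resting on the monotonicity of the damping together with the $L^{m+1}$--$L^{(m+1)/m}$ duality --- and take $T$ small depending on $\mathscr E(0)$ to obtain a local weak solution in the sense of Definition \ref{def-weak}. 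The energy identity \eqref{EI-0} follows by multiplying the $u$-equation by $u_t$ and using $\nabla u_t=\nabla w_t+\nabla w_s$ with $\mu(\infty)=0$ and $w(\cdot,\cdot,0)=0$ to integrate the memory term by parts in $s$; since $\mu'\le0$, the term $-\tfrac12\int_0^t\int_0^\infty\norm{\nabla w(\t,s)}_2^2\mu'(s)\,ds\,d\t$ is nonnegative. Proved first on the approximate solutions and then passed to the limit, \eqref{EI-0} also yields --- through Sobolev ($p<6$) and Young estimates on $\int_0^t\int_\O|u|^{p-1}uu_t$ --- an a priori bound on $\mathscr E(t)$ over $[0,T]$ depending only on $\mathscr E(0)$, which is exactly what permits removing the truncation.

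For uniqueness under the additional hypothesis $u_0(0)\in L^{3(p-1)/2}(\O)$, I would take two solutions $u,\hat u$ with identical data, set $z=u-\hat u$, and run the energy estimate for the difference. All terms close by Gronwall except the source difference $\int_0^t\int_\O\big(|u|^{p-1}u-|\hat u|^{p-1}\hat u\big)z_t\,dx\,d\t$; here I would use $\big||u|^{p-1}u-|\hat u|^{p-1}\hat u\big|\le C\big(|u|^{p-1}+|\hat u|^{p-1}\big)|z|$ and Hölder's inequality with exponents dictated by $H^1_0(\O)\hookrightarrow L^6(\O)$ \emph{augmented} by the propagated regularity $u,\hat u\in L^\infty(0,T;L^{3(p-1)/2}(\O))$ --- which follows from $u_0(0)\in L^{3(p-1)/2}(\O)$ and the equation --- with $z_t\in L^2(\O)$ and $z$ interpolated between $L^2(\O)$ and $L^6(\O)$, so as to bound this term by $\int_0^t g(\t)\big(\norm{z_t(\t)}_2^2+\norm{\nabla z(\t)}_2^2\big)\,d\t$ for some $g\in L^1(0,T)$; Gronwall then forces $z\equiv0$.

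The hard part, at every stage, is the supercritical source. It is what forces the analysis into the $L^{m+1}$--$L^{(m+1)/m}$ duality instead of the comfortable $L^2$ setting (this is where $p(m+1)/m<6$ is genuinely used), what obliges one to secure a truncation-independent a priori bound before the local solution can be asserted unconditionally, and --- most delicately --- what makes the extra integrability $u_0(0)\in L^{3(p-1)/2}(\O)$ necessary in order to close the difference estimate for uniqueness. By contrast, the memory/transport part is harmless: because $\mu'\le0$ it contributes a monotone term, and the Dafermos change of variables turns the nonlocal-in-time problem into a standard autonomous one.
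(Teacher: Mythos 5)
This theorem is not proved in the present paper at all: it is imported verbatim from the authors' earlier work \cite{GRSTT}, so there is no in-paper proof to compare against. That said, your outline is essentially a faithful reconstruction of the strategy actually used in \cite{GRSTT}: the Dafermos history variable $w(t,s)=u(t)-u(t-s)$ with the transport equation $w_t+w_s=u_t$, the autonomous first-order reformulation on $H^1_0(\O)\times L^2(\O)\times L^2_\mu(\reals^+,H^1_0(\O))$, the Kato--Barbu $m$-accretive framework of Bociu--Lasiecka with the source viewed through the $L^{m+1}$--$L^{(m+1)/m}$ duality (which is exactly where $p\frac{m+1}{m}<6$ enters), Lipschitz truncation of the source removed by the a priori bound from the energy identity, and the extra integrability $u_0(0)\in L^{3(p-1)/2}(\O)$ for uniqueness. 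One concrete caveat about the uniqueness step as you literally describe it: a direct H\"older pairing of $\bigl(|u|^{p-1}+|\hat u|^{p-1}\bigr)|z|$ against $z_t\in L^2(\O)$ does \emph{not} close with only $u\in L^\infty(0,T;L^{3(p-1)/2}(\O))$, since $\int_\O |u|^{p-1}|z||z_t|\,dx\le \|u\|^{p-1}_{3(p-1)}\|z\|_6\|z_t\|_2$ requires $u\in L^{3(p-1)}(\O)$ (the exponents $\tfrac{2}{3}+\tfrac16+\tfrac12$ do not sum to $1$). The exponent $\tfrac{3(p-1)}{2}$ is tailored instead to the quantity $\int_\O|u|^{p-1}z^2\,dx\le\|u\|_{3(p-1)/2}^{p-1}\|z\|_6^2$, which is what appears after one integrates the source-difference term by parts in time (writing $f(u)-f(\hat u)=z\int_0^1 f'(\lambda u+(1-\lambda)\hat u)\,d\lambda$ and moving the time derivative off $z_t$); without that extra step, or a comparable device, the Gronwall argument does not close in the supercritical range $p\ge 5$ where $3(p-1)/2\ge 6$. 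You should also make explicit how the $L^{3(p-1)/2}$ bound propagates from $t=0$: it uses $u(t)-u(0)=\int_0^t u_t$ together with $u_t\in L^{m+1}(\O\times(0,T))$ and the fact that $p\frac{m+1}{m}<6$ forces $m+1\ge\frac{3(p-1)}{2}$ precisely when $p\ge5$. With those two points repaired, your sketch matches the proof in \cite{GRSTT}.
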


\vspace{0.02 in}

\begin{remark}
If $u(t)\in L^{p+1}(\Omega)$ for all $t$ in the lifespan of the solution, we can define the \emph{total energy} $E(t)$ of system (\ref{1.1}) by
\begin{align}  \label{to-ene}
E(t)&=\mathscr E(t) -  \frac{1}{p+1}\|u(t)\|_{p+1}^{p+1} \notag \\
&=\frac{1}{2}\left(\norm{u_t(t)}_2^2+\norm{\nabla u(t)}_2^2+\int_0^{\infty}\norm{\nabla w(t,s)}_2^2 \mu(s)ds \right)- \frac{1}{p+1}\|u(t)\|_{p+1}^{p+1}.
\end{align}
It is readily seen that, in terms of the total energy $E(t)$, the energy identity (\ref{EI-0}) can be written as
\begin{align} \label{EI-1}
E(t)+\int_0^t \int_{\O} |u_t|^{m+1} dx d\t-\frac{1}{2}\int_0^t \int_0^{\infty} \norm{\nabla w(\t, s)}_2^2 \mu'(s)ds d\t =E(0).
\end{align}
In the energy identity (\ref{EI-1}), the term $\int_0^t \int_{\O} |u_t|^{m+1} dx d\t$ represents the dissipation due to the frictional damping, whereas the term $-\frac{1}{2}\int_0^t \int_0^{\infty} \norm{\nabla w(\t, s)}_2^2 \mu'(s)ds d\t$ stands for the dissipation that is originated from the viscous feature of the viscoelastic medium. Also, the total energy $E(t)$ is monotone decreasing in time, since differentiating (\ref{EI-1}) yields
\begin{align}   \label{e-decrease}
E'(t) = -\|u_t(t)\|_{m+1}^{m+1} + \frac{1}{2} \int_0^{\infty} \|\nabla w(t,s)\|_2^2 \mu'(s) ds\leq 0,
\end{align}
since we have assumed $\mu'(s)\leq 0$ for all $s\geq 0$.
\end{remark}

The next result states that weak solutions of (\ref{1.1}) depend continuously on the history.
\begin{theorem}[{\bf Continuous dependence on the history} \cite{GRSTT}]  \label{thm-cont}
In addition to Assumption \ref{ass}, assume that  $u_0(0)\in L^{\frac{3(p-1)}{2}}(\O)$.
If $u_0^n\in L^2_{\mu}(\reals^-, H^1_0(\O))$ is a sequence of history values such that $u_0^n\longrightarrow u_0$ in $L^2_{\mu}(\reals^-,H^1_0(\O))$ with
$u^n_0(0)\longrightarrow u_0(0)$ in $H^1_0(\O)$ and in $L^{\frac{3(p-1)}{2}}(\O)$, $\frac{d}{dt}u^n_0(0)\longrightarrow \frac{d}{dt}u_0(0)$ in $L^2(\O)$, then the corresponding weak solutions $u_n$ and $u$ of (\ref{1.1}) satisfy
\begin{align*}
u_n\longrightarrow u \text{\;\;in\;\;}  C([0,T];H^1_0(\O)) \text{\;\;\;and\;\;\;}
(u_n)_t\longrightarrow u_t  \text{\;\;in\;\;}  C([0,T];L^2(\O)).
\end{align*}
\end{theorem}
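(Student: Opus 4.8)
\smallskip

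The plan is to deduce the statement from a single energy estimate for the difference $z_n:=u_n-u$, namely the same estimate that underlies the uniqueness assertion in Theorem~\ref{thm-exist}. First, since $u_0^n\to u_0$ in $L^2_\mu(\reals^-,H^1_0(\O))$ together with $u_0^n(0)\to u_0(0)$ in $H^1_0(\O)$ and $\frac{d}{dt}u_0^n(0)\to\frac{d}{dt}u_0(0)$ in $L^2(\O)$, the initial quadratic energies $\mathscr E^n(0)$ are uniformly bounded, so by Theorem~\ref{thm-exist} all the $u_n$ (and $u$) may be taken to be defined on one common interval $[0,T]$; moreover the energy identity (\ref{EI-0}) together with the exponent restriction $p\frac{m+1}{m}<6$ furnishes bounds, uniform in $n$, for $u_n$ in $C([0,T];H^1_0(\O))$, for $(u_n)_t$ in $C([0,T];L^2(\O))\cap L^{m+1}(\O\times(0,T))$, and --- invoking the extra hypothesis $u_0(0)\in L^{3(p-1)/2}(\O)$ --- for $u_n$ in $C([0,T];L^{3(p-1)/2}(\O))$. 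The function $z_n$ then solves (\ref{1.1}) with the source replaced by $|u_n|^{p-1}u_n-|u|^{p-1}u$, with the \emph{monotone increment} $|(u_n)_t|^{m-1}(u_n)_t-|u_t|^{m-1}u_t$ in place of the damping, and with history $z_n(t)=u_0^n(t)-u_0(t)$ for $t\le 0$.

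The core step is an energy identity for $z_n$. Rewriting the principal part by means of the relative displacement and the normalization $k(\infty)=1$ (so that $k(0)\Delta v+\int_0^\infty k'(s)\Delta v(t-s)\,ds=\Delta v+\int_0^\infty\mu(s)\,\Delta(v(t)-v(t-s))\,ds$) and pairing the equation for $z_n$ with $(z_n)_t$ --- legitimate, as for (\ref{EI-0}), only after a Galerkin/mollification approximation, since $(z_n)_t$ is not itself an admissible test function --- I would obtain, with $W_n(t,s):=z_n(t)-z_n(t-s)$ and
\[
\mathscr E_n(t):=\tfrac12\Big(\norm{(z_n)_t(t)}_2^2+\norm{\nabla z_n(t)}_2^2+\int_0^\infty\norm{\nabla W_n(t,s)}_2^2\,\mu(s)\,ds\Big),
\]
an identity $\mathscr E_n(t)+D_n(t)=\mathscr E_n(0)+\mathcal I_n(t)$, in which $D_n(t)\ge 0$ collects the memory dissipation $-\tfrac12\int_0^t\int_0^\infty\norm{\nabla W_n}_2^2\,\mu'(s)\,ds\,d\tau$ (nonnegative since $\mu'\le 0$) and the damping increment $\int_0^t\int_\O(|(u_n)_t|^{m-1}(u_n)_t-|u_t|^{m-1}u_t)(z_n)_t\,dx\,d\tau$, which in turn bounds $c_m\int_0^t\norm{(z_n)_t}_{m+1}^{m+1}\,d\tau$ from below, while $\mathcal I_n(t):=\int_0^t\int_\O(|u_n|^{p-1}u_n-|u|^{p-1}u)(z_n)_t\,dx\,d\tau$. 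The term $\mathscr E_n(0)$ tends to $0$: its kinetic and elastic parts are $\tfrac12\norm{\frac{d}{dt}u_0^n(0)-\frac{d}{dt}u_0(0)}_2^2$ and $\tfrac12\norm{\nabla(u_0^n(0)-u_0(0))}_2^2$, and the memory part is dominated by $\norm{\mu}_{L^1(\reals^+)}\norm{u_0^n(0)-u_0(0)}_{H^1_0}^2+\norm{u_0^n-u_0}_{L^2_\mu(\reals^-,H^1_0)}^2$; all three vanish by hypothesis.

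The genuine obstacle is the supercritical source term $\mathcal I_n$. For $p>3$ the map $v\mapsto|v|^{p-1}v$ is not locally Lipschitz from $H^1_0(\O)$ into $L^2(\O)$, so $\mathcal I_n$ cannot be bounded by a naive H\"older estimate against $(z_n)_t\in L^2(\O)$; this is precisely why the hypothesis $u_0(0)\in L^{3(p-1)/2}(\O)$ --- propagated to the uniform $C([0,T];L^{3(p-1)/2}(\O))$ bound above --- is imposed: it yields $\norm{\,|u_n|^{p-1}u_n-|u|^{p-1}u\,}_{6/5}\le C\norm{\nabla z_n}_2$, i.e. the source increment is Lipschitz into $H^{-1}(\O)$ on the bounded set where the solutions live. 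I would therefore integrate $\mathcal I_n$ by parts in time, $\mathcal I_n(t)=\innerprod{G_n(t),z_n(t)}-\innerprod{G_n(0),z_n(0)}-\int_0^t\innerprod{(G_n)_\tau,z_n(\tau)}\,d\tau$ with $G_n:=|u_n|^{p-1}u_n-|u|^{p-1}u$; the endpoint term at $t$ is $\le C\norm{\nabla z_n(t)}_2^2$, a controlled multiple of which is absorbed, after a Young inequality, into $\mathscr E_n(t)$ and $D_n(t)$; the endpoint term at $0$ is absorbed into $\mathscr E_n(0)$; and $(G_n)_\tau$ splits into a piece of size $|u_n|^{p-1}|(z_n)_t|$ --- handled through the velocity regularity $(z_n)_t\in L^{m+1}(\O)$ and the \emph{sharp} exponent balance $p\frac{m+1}{m}<6$, then absorbed by $c_m\int_0^t\norm{(z_n)_t}_{m+1}^{m+1}\,d\tau$ --- and a piece of size $(|u_n|^{p-2}+|u|^{p-2})|u_t|\,|z_n|$, treated analogously. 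Carrying out this bookkeeping leads to a Gronwall-type inequality for $\mathscr E_n$ whose right-hand side is controlled by $\mathscr E_n(0)$ plus an integral term that the restriction $p\frac{m+1}{m}<6$ renders integrable; hence $\sup_{[0,T]}\mathscr E_n\to 0$, and since $\mathscr E_n(t)$ dominates $\tfrac12\norm{(z_n)_t(t)}_2^2+\tfrac12\norm{\nabla z_n(t)}_2^2$ uniformly in $t$, this is exactly the asserted convergence $u_n\to u$ in $C([0,T];H^1_0(\O))$ and $(u_n)_t\to u_t$ in $C([0,T];L^2(\O))$. I expect the precise choice of exponents in this last paragraph --- distributing the time derivative in $(G_n)_\tau$ and verifying that each resulting term carries either a small constant or a factor that balances against the $L^{m+1}$ dissipation --- to be the delicate part, as it is in \cite{GRSTT}.
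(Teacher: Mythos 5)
First, a point of comparison: the present paper does not prove this theorem at all --- it is recalled verbatim from \cite{GRSTT}, so there is no in-paper argument to measure your proposal against. Judged against the proof that actually lives in \cite{GRSTT}, your overall architecture is the right one: energy identity for the difference $z_n=u_n-u$, monotonicity of $s\mapsto|s|^{m-1}s$ to keep the damping increment as a dissipation bounding $c_m\int_0^t\norm{(z_n)_t}_{m+1}^{m+1}$, convergence of $\mathscr E_n(0)$ from the three convergence hypotheses, integration by parts in time on the supercritical source difference, and the exponent balance $p\frac{m+1}{m}<6$ to pay for $(G_n)_\tau$ against the $L^{m+1}$ velocity dissipation. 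That is precisely the mechanism in \cite{GRSTT} (and in the Bociu--Lasiecka uniqueness argument it descends from).

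There are, however, two places where your sketch asserts something that would fail as literally written. The serious one is the endpoint term $\int_\O G_n(t)z_n(t)\,dx$ after integration by parts: the natural bound is $\norm{G_n(t)}_{6/5}\norm{z_n(t)}_6\le C R^{p-1}\norm{\nabla z_n(t)}_2^2$ with $R=\sup_t(\norm{u_n(t)}_{3(p-1)/2}+\norm{u(t)}_{3(p-1)/2})$, and this constant is in no way small, so it cannot be absorbed into $\mathscr E_n(t)\ge\tfrac12\norm{\nabla z_n(t)}_2^2$ by a Young inequality. The missing idea is to manufacture smallness from the length of the time interval: write $G_n(t)=G_n(0)+\int_0^t(G_n)_\tau\,d\tau$, estimate $\int_0^t\norm{(G_n)_\tau}_{6/5}\,d\tau\le C\,t^{\theta}$ (a positive power of $t$ coming from H\"older in time and the $L^{m+1}(\O\times(0,t))$ bound on the velocities, again using $p\frac{m+1}{m}<6$), run the estimate on $[0,T_0]$ with $T_0$ small enough that the resulting coefficient is below $\tfrac12$, and then iterate over finitely many subintervals of $[0,T]$. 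The second, lesser issue is your claim that the energy identity ``furnishes'' a uniform $C([0,T];L^{3(p-1)/2}(\O))$ bound: for $p\le 5$ this is free from $H^1_0(\O)\hookrightarrow L^6(\O)$, but for $5<p<6$ one has $\tfrac{3(p-1)}{2}>6$ and the propagation of this integrability from $u_0(0)$ forward in time is a separate lemma (proved in \cite{GRSTT}), not a consequence of energy conservation. Both gaps are repairable by standard means, but they are exactly the places where the hypotheses $u_0(0)\in L^{3(p-1)/2}(\O)$ and $p\frac{m+1}{m}<6$ do their work, so they deserve to be spelled out rather than asserted.
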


Furthermore, the following result states: if the damping dominates the source term, then the solution is global.
\begin{theorem}[{\bf Global existence} \cite{GRSTT}] \label{thm-global}
In addition to Assumption \ref{ass}, further assume $u_0(0) \in L^{p+1}(\O)$. If $m\geq p$, then the weak solution of (\ref{1.1}) is global.
\end{theorem}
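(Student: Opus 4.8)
The plan is to establish an a priori bound on the quadratic energy $\mathscr{E}(t)$ over every finite time interval, after which a standard continuation argument finishes the proof: the local existence time in Theorem \ref{thm-exist} may be taken to depend only on $\mathscr{E}(0)$, so once $\mathscr{E}$ is known to stay bounded the solution can be extended indefinitely. I would start from the energy identity (\ref{EI-0}). Since $\mu'(s)\le 0$ the viscoelastic dissipation $-\frac12\int_0^t\int_0^\infty\norm{\nabla w(\t,s)}_2^2\mu'(s)\,ds\,d\t$ is nonnegative, so (\ref{EI-0}) reduces to
\begin{align*}
\mathscr{E}(t)+\int_0^t\int_\O|u_t|^{m+1}\,dx\,d\t\le\mathscr{E}(0)+\int_0^t\int_\O|u|^p|u_t|\,dx\,d\t ,
\end{align*}
and the whole task is to absorb the source integral on the right.

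The decisive point, and the only place where $m\ge p$ is used, is the elementary inequality $p\,\frac{m+1}{m}=p+\frac pm\le p+1$. Consequently, by H\"older's inequality in $x$ (conjugate exponents $\frac{m+1}{m}$ and $m+1$), then H\"older's inequality on the bounded domain $\O$, and finally Young's inequality, one obtains for every $\e>0$
\begin{align*}
\int_\O|u|^p|u_t|\,dx\le\norm{u}_{p\frac{m+1}{m}}^{p}\norm{u_t}_{m+1}\le C\norm{u}_{p+1}^{p}\norm{u_t}_{m+1}\le\e\norm{u_t}_{m+1}^{m+1}+C_\e\bigl(1+\norm{u(t)}_{p+1}^{p+1}\bigr),
\end{align*}
where the last step again uses $p\frac{m+1}{m}\le p+1$ to replace that exponent by $p+1$ at the expense of an additive constant. (This uses $u(t)\in L^{p+1}(\O)$ throughout the lifespan, a property propagated from the hypothesis $u_0(0)\in L^{p+1}(\O)$ within the local theory.) Taking $\e=\frac12$ and inserting this into the previous display gives
\begin{align*}
\mathscr{E}(t)+\tfrac12\int_0^t\norm{u_t(\t)}_{m+1}^{m+1}\,d\t\le\mathscr{E}(0)+Ct+C\int_0^t\norm{u(\t)}_{p+1}^{p+1}\,d\t .\tag{$\star$}
\end{align*}

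It remains to bound $\norm{u(t)}_{p+1}^{p+1}$, which I would do by coupling $(\star)$ with the evolution of the $L^{p+1}$ norm. From the (formal, to be justified via the approximation scheme of the local theory) identity $\frac{d}{dt}\norm{u(t)}_{p+1}^{p+1}=(p+1)\int_\O|u|^{p-1}u\,u_t\,dx$ together with the same estimate (now with $\e=1$), one gets $\frac{d}{dt}\norm{u(t)}_{p+1}^{p+1}\le C\norm{u_t}_{m+1}^{m+1}+C\bigl(1+\norm{u(t)}_{p+1}^{p+1}\bigr)$. Integrating in time and using $(\star)$ to dominate $\int_0^t\norm{u_t}_{m+1}^{m+1}\,d\t$ by $2\mathscr{E}(0)+2Ct+2C\int_0^t\norm{u}_{p+1}^{p+1}\,d\t$ yields the closed \emph{linear} integral inequality
\begin{align*}
\norm{u(t)}_{p+1}^{p+1}\le\norm{u_0(0)}_{p+1}^{p+1}+C\mathscr{E}(0)+Ct+C\int_0^t\norm{u(\t)}_{p+1}^{p+1}\,d\t .
\end{align*}
Gronwall's inequality then gives $\norm{u(t)}_{p+1}^{p+1}\le\bigl(\norm{u_0(0)}_{p+1}^{p+1}+C\mathscr{E}(0)+Ct\bigr)e^{Ct}$, finite on every bounded interval; feeding this back into $(\star)$ bounds $\mathscr{E}(t)$ and $\int_0^t\norm{u_t}_{m+1}^{m+1}\,d\t$ on every $[0,T]$, so the solution extends globally.

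I expect the main technical obstacle to be the rigorous justification of the differential identity for $\norm{u(t)}_{p+1}^{p+1}$: when $p$ is close to $6$ one does not have $H^1_0(\O)\hookrightarrow L^{p+1}(\O)$, so $t\mapsto\norm{u(t)}_{p+1}^{p+1}$ is not obviously differentiable and even its finiteness must be carried along the argument, which forces one to work with the regularized/approximate problems used to build the local solution (or with a regularization of $u\mapsto|u|^{p-1}u$). The remaining ingredients are only the H\"older, Young, and Gronwall inequalities.
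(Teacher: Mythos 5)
Your argument is correct and is essentially the proof given in \cite{GRSTT} (the theorem is only quoted, not proved, in the present paper): one absorbs the source via H\"older and Young using $p\frac{m+1}{m}\le p+1$ when $m\ge p$, couples the energy identity with the evolution of $\|u(t)\|_{p+1}^{p+1}$, and closes with Gronwall before invoking the continuation criterion of Theorem \ref{thm-exist}. The one technical caveat you flag --- justifying $\frac{d}{dt}\|u(t)\|_{p+1}^{p+1}=(p+1)\int_{\Omega}|u|^{p-1}u\,u_t\,dx$ at the level of weak solutions, since for $p>5$ one does not have $H^1_0(\Omega)\hookrightarrow L^{p+1}(\Omega)$ --- is indeed the point that must be (and is) handled through the approximation scheme of the local theory, so your proposal matches the intended argument.
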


However, if the initial energy $E(0)$ is negative and the source dominates the dissipation in the system, then the weak solution of (\ref{1.1}) blows up in finite time. Specifically, we have the following result.
\begin{theorem}[{\bf Blow-up of solutions with negative initial energy} \cite{GRS}]   \label{thm-blow}
Assume the validity of Assumption \ref{ass} and $E(0)<0$. If $p>\max\{m,\sqrt{k(0)}\}$, then the weak solution $u$ of (\ref{1.1}) blows up in finite time. More precisely, $\|\nabla u(t)\|_2 \rightarrow \infty$ as $t\rightarrow T^-_{max}$, for some $T_{\max}\in (0,\infty)$.
\end{theorem}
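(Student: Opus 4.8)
The plan is to run a classical Lyapunov--functional / differential--inequality argument for wave equations with source and damping, adapted to the presence of the infinite memory. Set $\mathcal H(t):=-E(t)$. By the energy identity \eqref{EI-1} and $\mu'\le0$, $\mathcal H$ is nondecreasing, so $\mathcal H(t)\ge\mathcal H(0)=-E(0)>0$; moreover $\mathcal H(t)=\tfrac1{p+1}\|u(t)\|_{p+1}^{p+1}-\mathscr E(t)\le\tfrac1{p+1}\|u(t)\|_{p+1}^{p+1}$, so $\|u(t)\|_{p+1}^{p+1}\ge(p+1)\mathcal H(0)>0$ stays bounded below. A useful preliminary remark: in the blow-up regime $p>m$ Assumption~\ref{ass} forces $p+1<p\tfrac{m+1}{m}<6$, hence $H^1_0(\O)\hookrightarrow L^{p+1}(\O)$, the total energy is well defined along the whole solution, and $\|u(t)\|_{p+1}\le C\|\nabla u(t)\|_2$.

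Next, with small parameters $\a,\e>0$ to be fixed later, I would consider
\[
\mathcal L(t):=\mathcal H(t)^{1-\a}+\e\int_\O u(t)u_t(t)\,dx,\qquad\text{so that }\mathcal L(0)>0\text{ for }\e\text{ small}.
\]
Testing \eqref{weak} with $\phi=u$ shows $t\mapsto\int_\O uu_t\,dx$ is absolutely continuous; computing its derivative, writing $\nabla u(t-s)=\nabla u(t)-\nabla w(t,s)$, using $\int_0^\infty\mu(s)\,ds=k(0)-k(\infty)=k(0)-1$ (which collapses the elastic coefficient $k(0)$ to $k(\infty)=1$), and substituting $\|u\|_{p+1}^{p+1}=(p+1)(\mathcal H(t)+\mathscr E(t))$, one arrives at
\begin{align*}
\tfrac{d}{dt}\!\int_\O uu_t\,dx
&=(p+1)\mathcal H(t)+\tfrac{p+3}{2}\|u_t\|_2^2+\tfrac{p-1}{2}\|\nabla u\|_2^2+\tfrac{p+1}{2}\int_0^\infty\|\nabla w(t,s)\|_2^2\mu(s)\,ds\\
&\quad-\int_0^\infty\mu(s)\int_\O\nabla w(t,s)\cdot\nabla u(t)\,dx\,ds-\int_\O|u_t|^{m-1}u_tu\,dx.
\end{align*}
The first four terms are ``good'': they are nonnegative and together dominate $\|u\|_{p+1}^{p+1}$. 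The last two must be absorbed.

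For the memory cross term, Cauchy--Schwarz and Young's inequality give, for $\eta>0$,
\[
\Bigl|\int_0^\infty\!\mu(s)\!\int_\O\nabla w\cdot\nabla u\,dx\,ds\Bigr|\le\eta(k(0)-1)\|\nabla u\|_2^2+\tfrac1{4\eta}\int_0^\infty\|\nabla w(t,s)\|_2^2\mu(s)\,ds;
\]
choosing $\eta$ slightly larger than $\tfrac1{2(p+1)}$ keeps the coefficient of $\int\|\nabla w\|_2^2\mu$ nonnegative, while that of $\|\nabla u\|_2^2$ becomes $\tfrac{p-1}{2}-\eta(k(0)-1)\approx\tfrac{p^2-k(0)}{2(p+1)}$, which is positive \emph{precisely because} $p>\sqrt{k(0)}$. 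For the damping term I would use Hölder, $\int_\O|u_t|^m|u|\,dx\le\|u_t\|_{m+1}^m\|u\|_{m+1}$, then Young's inequality applied to the splitting $\bigl(\mathcal H^{-\a m/(m+1)}\|u_t\|_{m+1}^m\bigr)\bigl(\mathcal H^{\a m/(m+1)}\|u\|_{m+1}\bigr)$: the term carrying $\mathcal H^{-\a}\|u_t\|_{m+1}^{m+1}\le\mathcal H^{-\a}\mathcal H'$ (by \eqref{e-decrease}) is absorbed into $(1-\a)\mathcal H(t)^{-\a}\mathcal H'(t)$, while the other term is $C\mathcal H^{\a m}\|u\|_{m+1}^{m+1}\le C\|u\|_{p+1}^{(p+1)\a m+m+1}\le C\|u\|_{p+1}^{p+1}$, the last inequality holding because $p>m$ makes $(p+1)\a m+m+1\le p+1$ once $\a\le\tfrac{p-m}{(p+1)m}$ (and $\|u\|_{p+1}$ is bounded below). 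Fixing $\a$ small (below that threshold and below $\tfrac{p-1}{2(p+1)}$), then $\d$ and $\e$ suitably, every error term is absorbed, and using $\|u\|_{p+1}^{p+1}=(p+1)(\mathcal H+\mathscr E)$ one obtains
\[
\mathcal L'(t)\ge c\Bigl(\mathcal H(t)+\|u_t\|_2^2+\|\nabla u\|_2^2+\int_0^\infty\|\nabla w(t,s)\|_2^2\mu(s)\,ds\Bigr),\qquad c>0.
\]

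Finally, a reverse estimate: $\int_\O uu_t\,dx\le\|u\|_2\|u_t\|_2\le C\|u\|_{p+1}\|u_t\|_2$, so Young's inequality (with $\a$ small, using $p>1$) and $\|u\|_{p+1}^{p+1}=(p+1)(\mathcal H+\mathscr E)$ yield $\mathcal L(t)^{1/(1-\a)}\le C\bigl(\mathcal H(t)+\|u_t\|_2^2+\|\nabla u\|_2^2+\int_0^\infty\|\nabla w(t,s)\|_2^2\mu(s)\,ds\bigr)$. Combining the two displays gives $\mathcal L'(t)\ge\g\,\mathcal L(t)^{1/(1-\a)}$ with $\g>0$ and exponent $1/(1-\a)>1$; since $\mathcal L(0)>0$, $\mathcal L$ must become infinite at a finite time, so the maximal existence time $T_{\max}$ of $u$ is finite. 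Because the local existence time in Theorem~\ref{thm-exist} depends only on $\mathscr E(0)$, finiteness of $T_{\max}$ forces the blow-up alternative $\limsup_{t\to T_{\max}^-}\mathscr E(t)=\infty$, and then the bound $\mathscr E(t)=\tfrac1{p+1}\|u(t)\|_{p+1}^{p+1}-\mathcal H(t)\le\tfrac1{p+1}\|u(t)\|_{p+1}^{p+1}\le C\|\nabla u(t)\|_2^{p+1}$ (valid since $p+1<6$) gives $\|\nabla u(t)\|_2\to\infty$ as $t\to T_{\max}^-$. The step I expect to be the main obstacle is the bookkeeping in the damping estimate: one must split $|u_t|^m|u|$ so that the $u_t$-factor is controlled by $\mathcal H^{-\a}\mathcal H'$ while the remaining factor --- which carries the \emph{growing} weight $\mathcal H^{\a m}$ --- is still swallowed by $\|u\|_{p+1}^{p+1}$, and this balance is exactly what the hypothesis $p>m$ (via the admissible range $\a\le\tfrac{p-m}{(p+1)m}$) provides, just as the cross-memory estimate is exactly what $p>\sqrt{k(0)}$ provides.
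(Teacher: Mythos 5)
This theorem is only quoted here from \cite{GRS}; the present paper contains no proof of it, so the comparison must be made against that reference rather than against anything in this manuscript. Your perturbed-energy argument --- $\mathcal H=-E$, the functional $\mathcal H^{1-\a}+\e\int_\O uu_t\,dx$, the splitting $\nabla u(t-s)=\nabla u(t)-\nabla w(t,s)$ so that $p>\sqrt{k(0)}$ rescues the coefficient of $\|\nabla u\|_2^2$, and the weighted Young splitting of $\int_\O|u_t|^m|u|\,dx$ where $p>m$ fixes the admissible range of $\a$ --- is the standard Georgiev--Todorova/Messaoudi route and is essentially the proof given in \cite{GRS}; the sketch is correct, with only routine points left implicit (justifying $\frac{d}{dt}\int_\O uu_t\,dx$ from the weak formulation with $\phi=u$, and fixing the Young parameter before $\e$ so the $\mathcal H^{\a m}\|u\|_{m+1}^{m+1}$ remainder is absorbed by the good terms).
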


The next blow-up result deals with the scenario that the initial total energy is positive. 
We set the constant $d$ to be the mountain pass level: $$d:=\inf_{u\in H^1_0(\O)\backslash \{0\}} \sup_{ \lambda \geq 0} J(\lambda u), \text{ where  }  J(u):=\frac{1}{2}\norm{\nabla u}_2^2 - \frac{1}{p+1}\norm{u}_{p+1}^{p+1}.$$
It is well known that an equivalent definition for the constant $d$ is given by $d=\inf_{u\in \mathcal N} J(u)$, where  $\mathcal N$ is  the Nehari manifold defined by
\begin{align}    \label{Nehari-1}
\mathcal N=\{u\in H^1_0(\Omega)\backslash \{0\}:  \|\nabla u\|_2^2 = \|u\|_{p+1}^{p+1}\}.
\end{align}
Also, it can be shown, similar to the proof of Lemma \ref{lemm-d0}, that $d=\frac{p-1}{2(p+1)}\gamma^{-\frac{2(p+1)}{p-1}}$ where $\gamma$ is defined in
(\ref{gamma}).
In addition, we define the constants $y_0$ and $M$ by
\begin{align}   \label{con-1}
y_0:= \frac{p+1}{p-1}d,  \;\;\;   M:= \left(\frac{\sqrt{k(0)}+1}{2}\right)^{\frac{2}{p-1}} \frac{p-\sqrt{k(0)}}{p-1} d,\end{align}
provided $p>\sqrt{k(0)}>1$. It can be shown that $M<d$. We refer the reader to \cite{GRS} for  the details  of the proofs of these facts.

\begin{theorem}[{\bf Blow-up of solutions with positive initial energy} \cite{GRS}]   \label{thm-blow2}
 In addition to the validity of Assumption \ref{ass}, we assume that $0\leq E(0)<M$ and $\mathscr E(0) > y_0$, where the positive numbers $M$ and $y_0$ are defined in (\ref{con-1}). If $p>\max\{m,\sqrt{k(0)}\}$, then the weak solution $u$ of (\ref{1.1}) blows up in finite time. More precisely, $\|\nabla u(t)\|_2 \rightarrow \infty$ as $t\rightarrow T^-_{max}$, for some $T_{\max}\in (0,\infty)$.
\end{theorem}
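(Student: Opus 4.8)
\emph{Proof proposal.} Since this statement is quoted from \cite{GRS}, I only sketch the argument, which is the concavity (Levine--Georgiev--Todorova) method adapted to the full--history term. First I would pass to the equivalent formulation using the relative displacement $w(x,t,s)=u(x,t)-u(x,t-s)$: as $k(\infty)=1$ gives $\int_0^\infty\mu(s)\,ds=k(0)-1$, adding and subtracting $(k(0)-1)\Delta u$ turns the principal part of the equation in (\ref{1.1}) into $-\Delta u-\int_0^\infty\mu(s)\Delta w(t,s)\,ds$, so that
\begin{align*}
u_{tt}-\Delta u-\int_0^\infty\mu(s)\,\Delta w(t,s)\,ds+|u_t|^{m-1}u_t=|u|^{p-1}u\qquad\text{in }\O\times(0,T).
\end{align*}
Under Assumption \ref{ass} together with $p>m$ one necessarily has $p<5$, so $H^1_0(\O)\hookrightarrow L^{p+1}(\O)$, the total energy $E(t)$ of (\ref{to-ene}) is well defined along the weak solution, and by (\ref{e-decrease}) it is non-increasing; hence $E(t)\le E(0)<M<d$ throughout the lifespan.

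The first step is the invariance of the \emph{unstable set}. Writing $J$ as in the definition of $d$ and $I(u):=\norm{\grad u}_2^2-\norm{u}_{p+1}^{p+1}$, one has $J(u(t))\le E(t)<d=\inf_{\mathcal N}J$, so $u(t)$ never meets the Nehari manifold and, by continuity in $t$, the sign of $I(u(t))$ is constant. The hypotheses $\mathscr E(0)>y_0$ and $0\le E(0)<M$ are designed precisely so that this sign is negative and, in addition, $\mathscr E(t)>y_0$ for all $t$; consequently $\norm{u(t)}_{p+1}$ stays bounded below by a positive constant and, via $I(u(t))<0$ and the Sobolev inequality, is comparable to $\norm{\grad u(t)}_2$, while $\norm{u(t)}_{p+1}^{p+1}$ dominates the ``elastic'' quantity $\norm{\grad u(t)}_2^2+\int_0^\infty\norm{\grad w(t,s)}_2^2\mu(s)\,ds$. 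Also $\norm{\grad u(t)}_2^2>\tfrac{2(p+1)}{p-1}d$, whence $E(t)<d<\tfrac{p-1}{2(p+1)}\norm{\grad u(t)}_2^2$. Propagating all these strict inequalities while keeping track of the memory contribution to $\mathscr E(t)$ — which is where the gap $M<d$ is used — is the most delicate bookkeeping; see \cite{GRS}.

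Next, for small $\a\in\big(0,\min\{\tfrac{p-1}{2(p+1)},\tfrac{p-m}{(p+1)m}\}\big]$ and small $\e>0$ set
\begin{align*}
L(t):=\big(d-E(t)\big)^{1-\a}+\e\int_\O u(t)\,u_t(t)\,dx ,
\end{align*}
possibly with lower-order correction terms. By Step 1, $d-E(t)\ge d-M>0$ and $t\mapsto d-E(t)$ is non-decreasing with $\tfrac{d}{dt}(d-E(t))=\norm{u_t}_{m+1}^{m+1}-\tfrac12\int_0^\infty\norm{\grad w(t,s)}_2^2\mu'(s)\,ds\ge\norm{u_t}_{m+1}^{m+1}$. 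Differentiating $L$ and, after a routine regularization, testing the reformulated equation with $u$ gives
\begin{align*}
\tfrac{d}{dt}\int_\O uu_t\,dx=\norm{u_t}_2^2-\norm{\grad u}_2^2-\int_0^\infty\mu(s)\innerprod{\grad w(t,s),\grad u(t)}\,ds-\int_\O|u_t|^{m-1}u_tu\,dx+\norm{u}_{p+1}^{p+1}.
\end{align*}
I would write $\norm{u}_{p+1}^{p+1}=(p+1)(\mathscr E(t)-E(t))$ so as to free the coercive part of $2\mathscr E(t)$, estimate the memory cross term by Young's inequality with a free parameter $\eta>0$,
\begin{align*}
\Big|\int_0^\infty\mu(s)\innerprod{\grad w(t,s),\grad u(t)}\,ds\Big|\le\tfrac{\eta(k(0)-1)}{2}\norm{\grad u}_2^2+\tfrac{1}{2\eta}\int_0^\infty\norm{\grad w(t,s)}_2^2\mu(s)\,ds ,
\end{align*}
and control the residual $-(p+1)E(t)$ using the unstable-set bounds of Step 1 (in particular $E(t)<\tfrac{p-1}{2(p+1)}\norm{\grad u(t)}_2^2$ and $\mathscr E(t)>y_0$). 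One checks that the surviving coefficients can all be made to have the favourable sign for a suitable $\eta$ exactly when $\tfrac1{p+1}<\tfrac{p-1}{k(0)-1}$, i.e. when $p>\sqrt{k(0)}$ — optimizing over $\eta$ and over the splitting of $-(p+1)E(t)$ is what produces the explicit constants $y_0$ and $M$ of (\ref{con-1}). Finally the damping term $\int_\O|u_t|^{m-1}u_tu\,dx$ is absorbed by a Young inequality whose parameter is a suitable power of $d-E(t)$, throwing part of it into $(1-\a)(d-E(t))^{-\a}\tfrac{d}{dt}(d-E(t))$ and the rest into $\norm{u}_{p+1}^{p+1}$ — this is where $p>m$ enters, forcing the restriction on $\a$ above. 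Choosing $\e$ small, one reaches, for some $c>0$,
\begin{align*}
L'(t)\ \ge\ c\Big(\norm{u_t}_2^2+\norm{\grad u}_2^2+\int_0^\infty\norm{\grad w(t,s)}_2^2\mu(s)\,ds+\norm{u}_{p+1}^{p+1}\Big).
\end{align*}

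To conclude, both $\big|\int_\O uu_t\,dx\big|^{1/(1-\a)}$ (by H\"older, Sobolev and the smallness of $\a$) and $d-E(t)\le d+\tfrac1{p+1}\norm{u}_{p+1}^{p+1}\le C\norm{u}_{p+1}^{p+1}$ are dominated by the right-hand side above, so $L'(t)\ge c\,L(t)^{1/(1-\a)}$ with exponent $1/(1-\a)>1$. As $L(0)>0$ can be arranged (take $\e$ small, and if necessary restart from a time at which $\int_\O uu_t\,dx\ge0$), $L$ must become infinite at some finite $T_{\max}$; since $L(t)\le C\big(1+\norm{\grad u(t)}_2^{\,q}\big)$ for a suitable exponent $q$ — again by the Step-1 equivalences among $\norm{u(t)}_{p+1}$, $\norm{u_t(t)}_2$ and $\norm{\grad u(t)}_2$ — this forces $\norm{\grad u(t)}_2\to\infty$ as $t\to T_{\max}^-$, which is the claim. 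The two hard points are the invariance of the unstable set and the handling of the memory cross term in $\tfrac{d}{dt}\int_\O uu_t\,dx$: its splitting must reabsorb \emph{both} its $\norm{\grad u}_2^2$ part and its memory part together with the residual $-(p+1)E(t)$, which is possible only under the sharp restriction $p>\sqrt{k(0)}$. The supercriticality of the source causes no new trouble here, since inside the unstable set every Sobolev or interpolation inequality is applied in the favourable direction, bounding lower-order quantities by $\norm{u(t)}_{p+1}^{p+1}$.
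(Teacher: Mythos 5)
This theorem is not proved in the present paper: it is imported verbatim from \cite{GRS}, so there is no in-paper argument to compare against, and your proposal has to be judged as a reconstruction of the cited proof. As such it is on target in its architecture: the rewriting of the principal part as $-\Delta u-\int_0^\infty\mu(s)\Delta w(t,s)\,ds$ is correct; the identification of $y_0=\frac{p+1}{p-1}d$ as the critical level whose invariance ($\mathscr E(t)>y_0$) drives the argument is right (it is the maximizer of $y\mapsto y-\frac{\gamma^{p+1}}{p+1}(2y)^{(p+1)/2}$, whose maximum value is $d$, and the invariance follows from $E(t)\le E(0)<M<d$ together with $E(t)\ge \mathscr E(t)-\frac{\gamma^{p+1}}{p+1}(2\mathscr E(t))^{(p+1)/2}$); and your balancing of the memory cross term, which yields the threshold $\frac{1}{p+1}<\frac{p-1}{k(0)-1}$, i.e.\ $p>\sqrt{k(0)}$, is exactly the computation that explains the hypothesis. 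The roles of $p>m$ (absorbing $\int|u_t|^{m-1}u_tu$), of $M<d$ (making $\|u(t)\|_{p+1}^{p+1}\ge (p+1)(y_0-M)>0$ quantitative), and the deduction $p<5$ from $p>m$ plus $p\frac{m+1}{m}<6$ are all correctly placed.

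Two caveats. First, your ``Step 1'' asserts that the sign of $I(u(t)):=\|\nabla u(t)\|_2^2-\|u(t)\|_{p+1}^{p+1}$ is negative from the start; this does not follow directly from the stated hypotheses, since $\mathscr E(0)>y_0$ constrains the sum of kinetic, elastic and memory terms, not $\|\nabla u(0)\|_2$ alone. The invariant that actually propagates is $\mathscr E(t)>y_0$, from which the lower bound on $\|u(t)\|_{p+1}^{p+1}$ is obtained via $\frac{1}{p+1}\|u(t)\|_{p+1}^{p+1}=\mathscr E(t)-E(t)>y_0-M$; the chain you wrote through $I(u(t))<0$ and $\|\nabla u(t)\|_2^2>\frac{2(p+1)}{p-1}d$ needs to be rerouted through this quantity (or the threshold argument restated for $\mathscr E$ rather than for the Nehari functional of $u(t)$ alone). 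Second, the two steps you yourself flag as ``the most delicate bookkeeping'' --- propagating the invariance with the memory contribution included, and verifying that the surviving coefficients in $L'(t)$ are simultaneously positive for an admissible choice of $\eta$, $\alpha$, $\epsilon$ --- are precisely where the constants $y_0$ and $M$ earn their explicit values, and they are deferred rather than carried out. So the proposal is a correct and well-motivated outline of the method of \cite{GRS}, but not a self-contained proof.
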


\vspace{0.05 in}

\section{Main results}    \label{sec-main}
This section contains the statements of the main results:  global existence of weak solutions in a potential well, and  uniform decay  rates of energy.

\subsection{Global existence}
In order to state the global existence result, we shall introduce a suitable notion of a potential well for the history value problem (\ref{1.1}).

First, notice that the total energy $E(t)$ defined in (\ref{to-ene}) can be decomposed into the sum of the kinetic energy $\frac{1}{2} \|u_t(t)\|_2^2$ and the potential energy $I(t)$:
\begin{align}    \label{potential}
I(t):=\frac{1}{2} \left(\|\nabla u(t)\|_2^2+\int_0^{\infty} \|\nabla w(t,s)\|_2^2 \mu(s) ds   \right)
-\frac{1}{p+1} \|u(t)\|_{p+1}^{p+1}.
\end{align}

We define the set $\mathcal M$ in the space $L^2_{\mu} (\mathbb R^-, H^1_0(\Omega))$: 
\begin{align} \label{Nehari}
\mathcal M = \Big\{ v &\in L^2_{\mu}(\mathbb R^-,H^1_0(\Omega))\backslash \{0\}: \;v \text{\;is weakly continuous at\;} t=0,    \text{\;and\;\;} \notag\\
&\|\nabla v(0)\|_2^2  + \int_0^{\infty} \|\nabla v(0)-\nabla v(-s)\|_2^2 \mu(s) ds = \|v(0)\|_{p+1}^{p+1} \Big\} .
\end{align}
The definition of the set $\mathcal M$ is motivated by the concept of the Nehari manifold defined in (\ref{Nehari-1}).

For each $p\in [1,5]$, let $\gamma>0$ denote the best constant for the Sobolev inequality 
\begin{align}   \label{Sob}
\|u\|_{p+1} \leq \gamma \|\nabla u\|_2, 
\end{align}
i.e., 
\begin{align}   \label{gamma}
\gamma=\sup \{\|u\|_{p+1}: u\in H^1_0(\Omega), \|\nabla u\|_2=1\}. 
\end{align}

For $v\in L^2_{\mu}(\mathbb R^-,H^1_0(\Omega))$ such that $v$ is weakly continuous at $t=0$, we define the functional 
\begin{align}   \label{Iv}
\mathcal I(v):= \frac{1}{2} \left(\|\nabla v(0)\|_2^2+\int_0^{\infty} \|\nabla v(0)-\nabla v(-s)\|_2^2 \mu(s) ds   \right)
-\frac{1}{p+1} \|v(0)\|_{p+1}^{p+1}.
\end{align}

We remark that the potential energy $I(t)$ defined in (\ref{potential}) and the functional $\mathcal I(v)$ defined in (\ref{Iv}) are closely connected. Indeed, 
if $u_0\in L^2_{\mu}(\mathbb R^-, H^1_0(\Omega))$ is a history datum of our problem (\ref{1.1}), then clearly 
\begin{align}   \label{Iu0}
\mathcal I(u_0)=I(0).
\end{align}

\begin{lemma}    \label{lemm-d0}
Assume $1<p\leq 5$. Then 
\begin{align*}
d:=\inf_{v\in \mathcal M} \mathcal I(v) =\frac{p-1}{2(p+1)}\gamma^{-\frac{2(p+1)}{p-1}}>0.
\end{align*}
\end{lemma}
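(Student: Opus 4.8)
The plan is to show the two inequalities $\inf_{v \in \mathcal M} \mathcal I(v) \le \frac{p-1}{2(p+1)} \gamma^{-\frac{2(p+1)}{p-1}}$ and $\inf_{v \in \mathcal M} \mathcal I(v) \ge \frac{p-1}{2(p+1)} \gamma^{-\frac{2(p+1)}{p-1}}$ separately, exploiting the fact that the memory terms $\int_0^\infty \|\nabla v(0) - \nabla v(-s)\|_2^2 \mu(s)\,ds$ appearing in $\mathcal M$ and in $\mathcal I(v)$ are nonnegative and can be made to vanish.

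First I would rewrite $\mathcal I$ on the constraint set. For $v \in \mathcal M$, abbreviate $a := \|\nabla v(0)\|_2^2 + \int_0^\infty \|\nabla v(0) - \nabla v(-s)\|_2^2 \mu(s)\,ds$ and note the Nehari-type relation $a = \|v(0)\|_{p+1}^{p+1}$, whence $\mathcal I(v) = \frac12 a - \frac{1}{p+1} a = \frac{p-1}{2(p+1)} a$. So it suffices to show $\inf_{v \in \mathcal M} a = \gamma^{-\frac{2(p+1)}{p-1}}$. Since $a \ge \|\nabla v(0)\|_2^2$ and, by the Sobolev inequality (\ref{Sob}), $\|v(0)\|_{p+1}^{p+1} \le \gamma^{p+1} \|\nabla v(0)\|_2^{p+1}$, the constraint $a = \|v(0)\|_{p+1}^{p+1}$ gives $a \le \gamma^{p+1} \|\nabla v(0)\|_2^{p+1} \le \gamma^{p+1} a^{(p+1)/2}$; since $v \ne 0$ forces $a > 0$ (here one uses that $v(0) \ne 0$, which I will justify — a function in $\mathcal M$ with $v(0) = 0$ would have $a = 0$ and hence $\nabla v(0) = 0$ and $\nabla v(-s) = 0$ $\mu$-a.e., i.e.\ $v = 0$, a contradiction), we may divide to get $1 \le \gamma^{p+1} a^{(p-1)/2}$, i.e.\ $a \ge \gamma^{-\frac{2(p+1)}{p-1}}$. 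This yields the lower bound $\mathcal I(v) \ge \frac{p-1}{2(p+1)} \gamma^{-\frac{2(p+1)}{p-1}}$ for every $v \in \mathcal M$, and in particular $d > 0$.

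For the upper bound I would produce near-minimizers. Take a maximizing sequence $\{\phi_n\} \subset H^1_0(\Omega)$ with $\|\nabla \phi_n\|_2 = 1$ and $\|\phi_n\|_{p+1} \to \gamma$ (using $1 < p \le 5$ so that the embedding $H^1_0(\Omega) \hookrightarrow L^{p+1}(\Omega)$ holds and $\gamma$ is finite and positive). For each fixed $n$ consider the \emph{time-independent} history $v_{n,\lambda}(x,t) := \lambda \phi_n(x)$ for all $t \le 0$, which lies in $L^2_\mu(\mathbb R^-, H^1_0(\Omega))$ because $\mu \in L^1(\mathbb R^+)$, is (trivially) weakly continuous at $t=0$, and satisfies $v_{n,\lambda}(0) - v_{n,\lambda}(-s) = 0$, so the memory term drops out. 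Then membership in $\mathcal M$ reduces to the scalar equation $\lambda^2 \|\nabla \phi_n\|_2^2 = \lambda^{p+1} \|\phi_n\|_{p+1}^{p+1}$, solved by $\lambda_n = (\|\nabla \phi_n\|_2^2 / \|\phi_n\|_{p+1}^{p+1})^{1/(p-1)} = \|\phi_n\|_{p+1}^{-(p+1)/(p-1)}$, which is positive and finite. For this choice $a = \lambda_n^2 = \|\phi_n\|_{p+1}^{-\frac{2(p+1)}{p-1}} \to \gamma^{-\frac{2(p+1)}{p-1}}$, giving $\mathcal I(v_{n,\lambda_n}) \to \frac{p-1}{2(p+1)} \gamma^{-\frac{2(p+1)}{p-1}}$ and hence $d \le \frac{p-1}{2(p+1)} \gamma^{-\frac{2(p+1)}{p-1}}$. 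Combining with the lower bound proves the claimed equality.

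The main obstacle — more a point requiring care than a genuine difficulty — is handling the memory term cleanly: one must check that it is legitimate to both (i) \emph{discard} it when constructing competitors (done above via constant-in-$t$ histories, using $\mu \in L^1$ so these have finite norm) and (ii) \emph{bound it below by zero} in the general estimate without losing information, which works precisely because the constraint defining $\mathcal M$ already packages $\|\nabla v(0)\|_2^2$ together with the nonnegative memory contribution into the single quantity $a$. The only other subtlety is the nondegeneracy argument ($v \ne 0 \Rightarrow a > 0$), which I sketched above and which is where the structure of the weighted space and weak continuity at $t = 0$ enter.
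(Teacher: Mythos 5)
Your proposal is correct and follows essentially the same route as the paper: the lower bound via the Nehari-type constraint combined with the Sobolev inequality (\ref{Sob}) and the nondegeneracy observation that $v(0)=0$ forces $v=0$ in $L^2_{\mu}(\mathbb R^-,H^1_0(\Omega))$, and the upper bound via constant-in-time competitors $\lambda\phi$ scaled onto the constraint set, for which the memory term vanishes. Your use of a maximizing sequence $\|\phi_n\|_{p+1}\to\gamma$ is a slight refinement of the paper's single-competitor computation (and is the cleaner way to handle the case $p=5$, where the Sobolev constant need not be attained), but it is the same argument in substance.
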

The proof for Lemma \ref{lemm-d0} will be given in Section \ref{pge}.

Next, we define the set $\mathcal W$ in $L^2_{\mu} (\mathbb R^-, H^1_0(\Omega))$ as
\begin{align*} 
\mathcal W=\{v\in L^2_{\mu}(\mathbb R^-,H^1_0(\Omega)): v \text{\;is weakly continuous at\;} t=0, \;
\mathcal I(v) < d  \}.
\end{align*}
We call the set $\mathcal W$ a \emph{potential well} in the space $L_{\mu}^2(\mathbb R^-;H^1_0(\Omega))$. Notice that, if $u_0\in \mathcal W$, then by (\ref{Iu0}), we see that $I(0)=\mathcal I(u_0)<d$, i.e., the initial potential energy is less than $d$.

Also, we define two disjoint subsets $\mathcal W_1$ and $\mathcal W_2$ of the potential well $\mathcal W$. In particular,
\begin{align*} 
&\mathcal W_1:=\left\{ v\in \mathcal W:  \|\nabla v(0)\|_2^2  + \int_0^{\infty} \|\nabla v(0)-\nabla v(-s)\|_2^2 \mu(s) ds > \|v(0)\|_{p+1}^{p+1}     \right \} \cup \{0\},      \\
&\mathcal W_2:=\left\{ v\in \mathcal W:  \|\nabla v(0)\|_2^2  + \int_0^{\infty} \|\nabla v(0) -\nabla v(-s)  \|_2^2 \mu(s) ds < \|v(0)\|_{p+1}^{p+1}  \right \}.  
  \end{align*}
It is clear that $\mathcal W_1 \cap \mathcal W_2 = \emptyset $ and $\mathcal W_1 \cup \mathcal W_2 = \mathcal W$.
\vspace{0.1 in}

We shall show that if $u_0\in \mathcal W_1$ and $E(0)<d$, then the solution is global (see Theorem \ref{thm-global1}). On the other hand, if $u_0\in \mathcal W_2$, then the solution might blow up in finite time. 
Indeed, we have the following corollary of Theorem \ref{thm-blow2}. 
\begin{corollary}
In addition to Assumption \ref{ass}, we assume $p> \max\{m,\sqrt{k(0)}\}$, $0\leq E(0)<M<d$ where $M$ is defined in (\ref{con-1}). If $u_0\in \mathcal W_2$, then the solution of (\ref{1.1}) blows up in finite time.
\end{corollary}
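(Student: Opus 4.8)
The plan is to obtain this statement directly from Theorem~\ref{thm-blow2}. Beyond Assumption~\ref{ass}, that theorem requires $p>\max\{m,\sqrt{k(0)}\}$, $0\le E(0)<M$, and $\mathscr E(0)>y_0$. The first two are assumed here outright --- and note that $k'<0$ together with $k(\infty)=1$ forces $k(0)>1$, so the constants $M,y_0$ in (\ref{con-1}) are well defined and $M<d$, as already recorded. Hence the only thing left to prove is the energy threshold $\mathscr E(0)>y_0$, and this is precisely where the hypothesis $u_0\in\mathcal W_2$ enters; everything else is a citation.

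To establish $\mathscr E(0)>y_0$, I would run a fibering computation over the manifold $\mathcal M$ defined in (\ref{Nehari}). Put
\[
a:=\|\nabla u_0(0)\|_2^2+\int_0^{\infty}\|\nabla u_0(0)-\nabla u_0(-s)\|_2^2\,\mu(s)\,ds,\qquad b:=\|u_0(0)\|_{p+1}^{p+1},
\]
so that $\mathcal I(u_0)=\tfrac12 a-\tfrac{1}{p+1}b$ and $w(0,s)=u_0(0)-u_0(-s)$; the condition $u_0\in\mathcal W_2$ says exactly that $a<b$, and since $u_0\neq0$ one in fact has $0<a<b$ (with $b$ finite, which is also forced by $E(0)\ge0$). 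Scaling the history by $\lambda\ge0$ replaces $a$ by $\lambda^2 a$ and $b$ by $\lambda^{p+1}b$, so $\mathcal I(\lambda u_0)=\frac{\lambda^2}{2}a-\frac{\lambda^{p+1}}{p+1}b$; this one-variable map is maximized at $\lambda_*=(a/b)^{1/(p-1)}\in(0,1)$, at which $\lambda_*^2 a=\lambda_*^{p+1}b$, i.e.\ $\lambda_* u_0\in\mathcal M$. Then Lemma~\ref{lemm-d0} gives $\mathcal I(\lambda_* u_0)\ge d$, while $\mathcal I(\lambda_* u_0)=\frac{p-1}{2(p+1)}\lambda_*^2 a$; hence $\lambda_*^2 a\ge\frac{2(p+1)}{p-1}d=2y_0$. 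Because $\lambda_*<1$ this yields $a>2y_0$, and therefore $\mathscr E(0)=\tfrac12\big(\|u_t(0)\|_2^2+a\big)\ge\tfrac12 a>y_0$ by (\ref{1.6}).

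With $\mathscr E(0)>y_0$ in hand, Theorem~\ref{thm-blow2} applies verbatim and yields the finite-time blow-up of $\|\nabla u(t)\|_2$. I do not expect a serious obstacle: the only non-obvious move is recognizing that $\mathcal W_2$-membership of the history datum is exactly what pins $\mathscr E(0)$ above the threshold $y_0$ through the fibering map over $\mathcal M$. The remaining points are routine --- checking that $\lambda_* u_0$ genuinely lies in $\mathcal M$ (it is nonzero, weakly continuous at $t=0$, and satisfies the defining identity by the choice of $\lambda_*$), keeping the chain of constants $d$, $M$, $y_0$ straight, and invoking the local-existence and positive-energy blow-up results of \cite{GRSTT,GRS}.
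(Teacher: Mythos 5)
Your proof is correct, and it reaches the same pivotal intermediate fact as the paper — namely that $u_0\in\mathcal W_2$ forces $\mathscr E(0)>y_0$, after which Theorem \ref{thm-blow2} does all the work — but by a genuinely different mechanism. The paper argues directly with the Sobolev inequality (\ref{Sob}): from $a<b\leq\gamma^{p+1}\|\nabla u_0(0)\|_2^{p+1}$ and $a\geq\|\nabla u_0(0)\|_2^2$ it divides through to get $\|\nabla u_0(0)\|_2>\gamma^{-\frac{p+1}{p-1}}$, hence $\mathscr E(0)\geq\tfrac12\|\nabla u_0(0)\|_2^2>\tfrac12\gamma^{-\frac{2(p+1)}{p-1}}=y_0$. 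You instead run the fibering map $\lambda\mapsto\mathcal I(\lambda u_0)$, land on $\mathcal M$ at $\lambda_*=(a/b)^{1/(p-1)}<1$, and invoke Lemma \ref{lemm-d0} to get $\lambda_*^2a\geq 2y_0$ and hence $a>2y_0$. Your route is more structural: it uses only the homogeneity of $\mathcal I$ and the variational characterization $d=\inf_{\mathcal M}\mathcal I$, so it would survive in settings where the sharp Sobolev constant is not explicitly available (of course, here Lemma \ref{lemm-d0} is itself proved via (\ref{Sob}), so the two arguments are ultimately the same inequality in different clothing). The paper's computation is shorter and yields the marginally stronger statement $\|\nabla u_0(0)\|_2^2>2y_0$, i.e.\ the gradient term alone clears the threshold, whereas you only conclude this for the full quantity $a$. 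One small bookkeeping remark: the cleanest way to see $0<a<b$ is that $a<b$ forces $b>0$, hence $u_0(0)\neq0$ in $H^1_0(\Omega)$, hence $a\geq\|\nabla u_0(0)\|_2^2>0$; and both your argument and the paper's implicitly use $p\leq5$ (through Lemma \ref{lemm-d0}, respectively through (\ref{Sob})), which is consistent with the hypothesis that $E(0)$ is a well-defined finite number.
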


\begin{proof}
Since $u_0\in \mathcal W_2$, 
\begin{align*}
\|\nabla u_0(0)\|_2^2  + \int_0^{\infty} \|\nabla u_0(0) -\nabla u_0(-s)  \|_2^2 \mu(s) ds < \|u_0(0)\|_{p+1}^{p+1} \leq \gamma^{p+1} \|\nabla u_0(0)\|_2^{p+1},
\end{align*} 
where we have used (\ref{Sob}). Dividing both sides of the above inequality by $\|\nabla u_0(0)\|_2^2$ yields $1 < \gamma^{p+1}\|\nabla u_0(0)\|_2^{p-1}$, which implies that
$\|\nabla u_0(0)\|_2 > \gamma^{-\frac{p+1}{p-1}}$, and thus $\mathscr E(0) \geq \frac{1}{2}\|\nabla u_0(0)\|_2^2 > \frac{1}{2}\gamma^{-\frac{2(p+1)}{p-1}}=y_0$ where $y_0$ is defined in (\ref{con-1}). Then by Theorem \ref{thm-blow2}, the solution blows up in finite time.
\end{proof}

In order to state a global existence result for $u_0\in \mathcal W_1$, we define a translation of functions as follows. Let $u(t)$ be a solution for system (\ref{1.1}) on $(-\infty,T)$ with the history value $u_0\in L^2_{\mu}(\mathbb R^-, H^1_0(\Omega))$. For each $t\in (0,T)$, 
we define $u^t: \mathbb R^-\rightarrow H^1_0(\Omega)$ by 
\begin{align}   \label{def-ut}
u^t(\tau):=u(t+\tau),   \;\; \tau\in \mathbb R^-.
\end{align}
Note that, if we fix a $t\in (0,T)$ and restrict $u$ on $(-\infty,t]$, then by (\ref{def-ut}), the function $u^t$ defined on $\mathbb R^-$ is obtained from shifting $u$ to the left $t$ units. Therefore, $u^t$ defined on $\mathbb R^-$ is a translated version of $u$ restricted to $(-\infty,t]$.

In addition, by (\ref{Iv}) and (\ref{def-ut}), we have
\begin{align}   \label{pot}
\mathcal I(u^t)&=\frac{1}{2} \left(\|\nabla u^t(0)\|_2^2+\int_0^{\infty} \|\nabla u^t(0)-\nabla u^t(-s)\|_2^2 \mu(s) ds   \right)
-\frac{1}{p+1} \|u^t(0)\|_{p+1}^{p+1}   \notag\\
&=\frac{1}{2} \left(\|\nabla u(t)\|_2^2+\int_0^{\infty} \|\nabla u(t)-\nabla u(t-s)\|_2^2 \mu(s) ds   \right)
-\frac{1}{p+1} \|u(t)\|_{p+1}^{p+1}.
\end{align}
According to (\ref{potential}) and (\ref{pot}), we obtain
\begin{align}   \label{pote}
\mathcal I(u^t)=I(t), \text{\;\;for any\;} t\in (0,T),
\end{align}
where $u$ is a solution of our system (\ref{1.1}).

\begin{theorem}   \label{thm-global1}
Let $1<p\leq 5$ and assume the validity of Assumption \ref{ass}. Suppose $E(0) < d$. Let $u$ be a weak solution of (\ref{1.1}) on its maximal interval of existence $(-\infty,T)$. For each fixed $t\in (0,T)$, define $u^t: \mathbb R^-\rightarrow H^1_0(\Omega)$ by (\ref{def-ut}).
If $u_0\in \mathcal W_1$, then $u^t \in \mathcal W_1$ for all $t\in (0,T)$. Moreover, the solution $u$ is global, i.e., $T=\infty$. Furthermore,
\begin{align}    \label{EE}
0\leq E(t)\leq E(0)<d   \text{\;\;\;\;and\;\;\;\;}  0\leq \frac{p-1}{p+1} \mathscr E(t) \leq E(t) \leq \mathscr E(t),  \text{\;\;for all\;\;}  t\geq 0.
\end{align}
\end{theorem}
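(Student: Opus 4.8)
The plan is to use the potential-well machinery of Lemma~\ref{lemm-d0} together with a one-dimensional continuity (connectedness) argument. For $t\in(0,T)$ I abbreviate $\Lambda(t):=\|\nabla u(t)\|_2^2+\int_0^{\infty}\|\nabla u(t)-\nabla u(t-s)\|_2^2\mu(s)\,ds$, so that by (\ref{pot})--(\ref{pote}) and (\ref{to-ene}) one has $\mathscr E(t)=\frac12\|u_t(t)\|_2^2+\frac12\Lambda(t)$, $\mathcal I(u^t)=I(t)=\frac12\Lambda(t)-\frac1{p+1}\|u(t)\|_{p+1}^{p+1}$, and $E(t)=\frac12\|u_t(t)\|_2^2+I(t)$; for a general $v$ weakly continuous at $\tau=0$ I write $\Lambda_v:=\|\nabla v(0)\|_2^2+\int_0^{\infty}\|\nabla v(0)-\nabla v(-s)\|_2^2\mu(s)\,ds$, and I set $c:=\frac{2(p+1)}{p-1}d$, which equals $\gamma^{-2(p+1)/(p-1)}$ by Lemma~\ref{lemm-d0}. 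The first observation is that $\mathcal I(u^t)<d$ for every $t\in[0,T)$: the total energy is nonincreasing by (\ref{EI-1}) and (\ref{e-decrease}), and $I(t)=E(t)-\frac12\|u_t(t)\|_2^2\le E(t)$, hence $\mathcal I(u^t)=I(t)\le E(t)\le E(0)<d$; thus $u^t\in\mathcal W$ for all $t\in(0,T)$ (weak continuity of $u^t$ at $\tau=0$ is automatic from $u\in C([0,T];H^1_0(\O))$), and $u^0=u_0\in\mathcal W_1\subset\mathcal W$.

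Next I would show that $\mathcal W_1$ and $\mathcal W_2$ lie on opposite sides of the level $\{\|\nabla v(0)\|_2^2=c\}$, and then run an intermediate-value argument. If $v\in\mathcal W_1\setminus\{0\}$ then $\|v(0)\|_{p+1}^{p+1}<\Lambda_v$, so $d>\mathcal I(v)>\frac12\Lambda_v-\frac1{p+1}\Lambda_v=\frac{p-1}{2(p+1)}\Lambda_v$, whence $\|\nabla v(0)\|_2^2\le\Lambda_v<c$ (trivially so when $v=0$); if $v\in\mathcal W_2$ then necessarily $v(0)\neq0$, and the Sobolev inequality (\ref{Sob}) gives $\|\nabla v(0)\|_2^2\le\Lambda_v<\|v(0)\|_{p+1}^{p+1}\le\gamma^{p+1}\|\nabla v(0)\|_2^{p+1}$, which forces $\|\nabla v(0)\|_2^2>\gamma^{-2(p+1)/(p-1)}=c$. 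Now consider $g(t):=\|\nabla u(t)\|_2^2=\|\nabla u^t(0)\|_2^2$, which is continuous on $[0,T)$ since $u\in C([0,T];H^1_0(\O))$. We have $g(0)=\|\nabla u_0(0)\|_2^2<c$ because $u_0\in\mathcal W_1$, and $g(t)\neq c$ for every $t\in(0,T)$ because $u^t\in\mathcal W=\mathcal W_1\cup\mathcal W_2$; since $g([0,T))$ is then an interval avoiding $c$ and containing $g(0)<c$, it must lie entirely in $(-\infty,c)$, so $u^t\notin\mathcal W_2$, i.e.\ $u^t\in\mathcal W_1$, for every $t\in(0,T)$.

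From $u^t\in\mathcal W_1$ we obtain $\|u(t)\|_{p+1}^{p+1}\le\Lambda(t)$, hence $I(t)\ge\frac12\Lambda(t)-\frac1{p+1}\Lambda(t)=\frac{p-1}{2(p+1)}\Lambda(t)\ge0$, and therefore $0\le E(t)=\frac12\|u_t(t)\|_2^2+I(t)\le E(0)<d$. Moreover $E(t)=\mathscr E(t)-\frac1{p+1}\|u(t)\|_{p+1}^{p+1}\le\mathscr E(t)$, while $E(t)\ge\frac12\|u_t(t)\|_2^2+\frac{p-1}{2(p+1)}\Lambda(t)\ge\frac{p-1}{p+1}\bigl(\frac12\|u_t(t)\|_2^2+\frac12\Lambda(t)\bigr)=\frac{p-1}{p+1}\mathscr E(t)\ge0$, using $\frac{p-1}{p+1}<1$. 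This is precisely (\ref{EE}) on $[0,T)$.

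It remains to see $T=\infty$. By the previous step $\mathscr E(t)\le\frac{p+1}{p-1}E(0)$ for all $t\in[0,T)$, so the quadratic energy stays bounded as $t\to T^-$; since $u$ restricted to $(-\infty,t]$ is an admissible history at time $t$ — its $L^2_{\mu}(\reals^-,H^1_0(\O))$-norm is bounded by $2\int_0^{\infty}\|\nabla w(t,s)\|_2^2\mu(s)\,ds+2\|\nabla u(t)\|_2^2\|\mu\|_{L^1(\reals^+)}$, and its time-derivative stays bounded in $L^2_{\mu}(\reals^-,L^2(\O))$ using $\mu'\le0$ and $\partial_t u_0\in L^2_{\mu}(\reals^-,L^2(\O))$ — Theorem~\ref{thm-exist} lets the solution be continued past any finite $T$, contradicting maximality; hence $T=\infty$ and (\ref{EE}) holds for all $t\ge0$. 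I expect the only genuinely delicate point to be the separation observation: recognizing that the Nehari-type set $\mathcal M$, whose members satisfy $\mathcal I\ge d$ by Lemma~\ref{lemm-d0}, is exactly the level $\{\|\nabla v(0)\|_2^2=c\}$ wedged between $\mathcal W_1$ and $\mathcal W_2$ is what allows the continuity argument to run using only the (automatic) continuity of $t\mapsto\|\nabla u(t)\|_2$, thereby bypassing any need for continuity in $t$ of the memory term $\int_0^{\infty}\|\nabla w(t,s)\|_2^2\mu(s)\,ds$, which is the usual source of technical friction in arguments of this kind.
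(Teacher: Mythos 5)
Your proof is correct, and the heart of it --- the invariance of $\mathcal W_1$ along the flow --- is carried out by a genuinely different route than the paper's. The paper argues by contradiction on the sign of $\|\nabla u(t)\|_2^2+\int_0^\infty\|\nabla w(t,s)\|_2^2\mu(s)\,ds-\|u(t)\|_{p+1}^{p+1}$ and applies the intermediate value theorem to that quantity; this forces it first to prove continuity in $t$ of the memory integral (extracted from the energy identity) and of $\|u(t)\|_{p+1}^{p+1}$ (the mean-value/H\"older computation (\ref{g-10})), and then to treat separately the degenerate possibility that the last crossing time $t^*$ satisfies $u^{t^*}=0$ (its Case 2). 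You instead establish the quantitative separation $\|\nabla v(0)\|_2^2<\gamma^{-2(p+1)/(p-1)}$ for $v\in\mathcal W_1$ versus $\|\nabla v(0)\|_2^2>\gamma^{-2(p+1)/(p-1)}$ for $v\in\mathcal W_2$ (both computations check out, including the identity $\frac{2(p+1)}{p-1}d=\gamma^{-2(p+1)/(p-1)}$ from Lemma \ref{lemm-d0}), and then run a connectedness argument on the single scalar $t\mapsto\|\nabla u(t)\|_2^2$, whose continuity is built into Definition \ref{def-weak}. This buys exactly what you claim: no continuity lemma for the memory term or for the $L^{p+1}$-norm, and no case analysis at the crossing point, since the zero history automatically lies on the $\mathcal W_1$ side of the threshold. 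The only external input is the identity $\mathcal W=\mathcal W_1\cup\mathcal W_2$, i.e.\ the nonexistence in $\mathcal W$ of nonzero elements on the Nehari-type set $\mathcal M$; that is precisely Lemma \ref{lemm-d0}, the same lemma the paper invokes at its crossing point, so nothing beyond the paper's toolkit is assumed. The remaining steps --- the chain of inequalities giving (\ref{EE}) and the continuation argument from the uniform bound $\mathscr E(t)\le\frac{p+1}{p-1}E(0)$ (where your verification that the shifted history remains admissible is, if anything, more explicit than the paper's one-line conclusion) --- agree with the paper's proof.
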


\begin{remark}
Recall the definition of the Nehari manifold $\mathcal N=\{u\in H^1_0(\Omega): \|\nabla u\|_2^2 = \|u\|_{p+1}^{p+1}\}$. 
It can be proven, similar to the proof of Lemma \ref{lemm-d0}, that 
$d=\inf_{u\in \mathcal N} J(u)=\frac{p-1}{2(p+1)}\gamma^{-\frac{2(p+1)}{p-1}}$ where $J(u)=\frac{1}{2}\|\nabla u\|_2^2 - \frac{1}{p+1} \|u\|_{p+1}^{p+1}$. Also, we can define a different potential well $\mathcal V=\{u\in H^1_0(\Omega): J(u)<d\}$ and the corresponding subset $\mathcal V_1=\{u\in \mathcal V:     \|\nabla u\|_2^2 > \|u\|_{p+1}^{p+1}\} \cup \{0\}$.
Under this setting, we are able to justify that if $E(0)<d$ and $u_0(0)\in \mathcal V_1$, then the solution $u$ is global and $u(t)\in \mathcal V_1$ for all $t\geq 0$. In fact, this result is easier to prove than Theorem \ref{thm-global1}. On the other hand, the assumption that $E(0)<d$ and $u_0(0)\in \mathcal V_1$ implies that $u_0\in \mathcal W_1$. Therefore, the global existence theorem stated in Theorem \ref{thm-global1} is a better result and more suitable for our system, in the sense that, its assumption allows  a broader selection of history data $u_0$, and takes fully advantage of the entire history.
\end{remark}

\vspace{0.02 in}

\subsection{Uniform energy decay rates}
The following result provides the uniform energy decay rate for global solutions in a potential well. The energy decay rate depends on the behavior of the damping term near the origin, as well as the decreasing rate of the relaxation kernel $\mu(s)$. In particular, if the damping is linear near the origin and the relaxation kernel decays to zero exponentially, then the energy also decays to zero exponentially fast. Otherwise, the energy decays polynomially. More precisely, we have the following theorem.
 
\begin{theorem}   \label{thm-decay}
In addition to Assumption \ref{ass}, we assume $1\leq m\leq 5$ and $1<p<5$. Suppose $E(0)< d$ and $u_0\in \mathcal W_1$. Then the total energy $E(t)$ decays to zero, i.e., $\lim_{t\rightarrow \infty}E(t)=0$, with the following exponential or polynomial decay rate, which depends on the damping growth rate $m$ and on the decay rate of the relaxation kernel $\mu(s)>0$. (In the following, $C$ is some positive constant.)

\emph{Case 1:} If $m=1$ and $\mu'(s)+C\mu(s) \leq 0$ for $s\geq 0$, then $E(t)\leq C(E(0)) e^{-\alpha t}$ for $t\geq 0$, where $\alpha$ depends on $E(0)$.

\emph{Case 2:} If $m>1$ and $\mu'(s)+C\mu(s) \leq 0$ for $s\geq 0$, then $E(t)\leq C(E(0)) (1+t)^{-\frac{2}{m-1}}$ for $t\geq 0$.

\emph{Case 3:} If $m=1$ and $\mu'(s)+C\mu(s)^r\leq 0$ for $s\geq 0$, where $r\in (1,2)$, and assume $M_0:=\sup_{t\in \mathbb R^-} \|\nabla u_0(t)\|_2<\infty$, then $E(t)\leq C(r,\sigma,M_0,E(0)) (1+t)^{-\frac{\sigma}{r-1}}$ for $t\geq 0$, for any $\sigma \in (0,2-r)$. If, in addition, we assume that $u_0$ is supported on the time interval $[-T_0,0]$ for some $T_0>0$, then $E(t)\leq C(r,M_0,T_0,E(0)) (1+t)^{-\frac{1}{r-1}}$ for $t\geq 0$.

\emph{Case 4:} If $m>1$ and $\mu'(s)+C\mu(s)^r \leq 0$ for $s\geq 0$, where $r\in (1,2)$, and assume $M_0:=\sup_{t\in \mathbb R^-} \|\nabla u_0(t)\|_2<\infty$, then $E(t)\leq C(r,\sigma,M_0,E(0)) (1+t)^{-\max\left\{\frac{\sigma}{r-1},\frac{2}{m-1}\right\}}$ for $t\geq 0$, for any $\sigma\in (0,2-r)$. If, in addition, we assume that $u_0$ is supported on the time interval $[-T_0,0]$ for some $T_0>0$, then $E(t)\leq C(r,M_0,T_0,E(0)) (1+t)^{-\max\left\{\frac{1}{r-1},\frac{2}{m-1}\right\}}$ for $t\geq 0$.
\end{theorem}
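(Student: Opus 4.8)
\medskip\noindent\textbf{Proof plan.}
The plan is to follow the Lasiecka--Tataru framework \cite{LT} and reduce the statement to a single observability-type inequality: for all $0\le S\le T$,
\begin{equation}\label{star-plan}
\int_S^T \Phi\big(E(t)\big)\,dt \le C\big(E(0)\big)\,E(S),
\end{equation}
for an increasing $\Phi$ with $\Phi(0)=0$ whose form depends on the regime, after which a standard nonlinear integral inequality of Komornik/Lasiecka--Tataru type yields the claimed rates by letting $T\to\infty$. The first step is to record the consequences of Theorem~\ref{thm-global1}: the solution is global, $E$ is nonincreasing, $0\le E(t)\le E(0)<d$, $\frac{p-1}{p+1}\mathscr E(t)\le E(t)\le \mathscr E(t)$ for all $t\ge 0$, and --- the key point --- a \emph{uniform contraction} coming from the potential well, namely, since $\|\nabla u(t)\|_2^2\le 2\mathscr E(t)\le \frac{2(p+1)}{p-1}E(0)<\gamma^{-\frac{2(p+1)}{p-1}}$, the Sobolev inequality (\ref{Sob}) gives $\|u(t)\|_{p+1}^{p+1}\le\gamma^{p+1}\|\nabla u(t)\|_2^{p+1}\le\theta\,\|\nabla u(t)\|_2^2$ for all $t\ge0$, with $\theta=\theta(E(0))\in(0,1)$. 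This uniform contraction is exactly what lets me absorb the source contribution directly, \emph{without} the compactness--uniqueness step customary for lower-order terms.

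The core of the argument is the equipartition multiplier. Testing (\ref{weak}) with $\phi=u$ produces (in differential form) $\frac{d}{dt}\int_\O u_t u\,dx=\|u_t\|_2^2-\big(\|\nabla u\|_2^2+\int_0^\infty\innerprod{\nabla w(\cdot,s),\nabla u}\mu\,ds\big)-\int_\O|u_t|^{m-1}u_t u\,dx+\|u\|_{p+1}^{p+1}$, where I have used $k(0)=1+\|\mu\|_{L^1(\reals^+)}$ and $k'=-\mu$ to reorganize the memory terms. I then multiply this by $E(t)^{q}$ --- with $q=0$ in the regimes where exponential decay is expected and $q=\tfrac{m-1}{2}$ otherwise --- and integrate over $[S,T]$, obtaining the identity
\begin{align*}
\int_S^T E^{q}\big(\|\nabla u\|_2^2-\|u\|_{p+1}^{p+1}\big)\,dt
&=\int_S^T E^{q}\|u_t\|_2^2\,dt-\int_S^T E^{q}\!\int_0^\infty\innerprod{\nabla w(\cdot,s),\nabla u}\mu\,ds\,dt\\
&\quad-\int_S^T E^{q}\!\int_\O|u_t|^{m-1}u_t\,u\,dx\,dt-\Big[E^{q}\!\int_\O u_t u\,dx\Big]_S^T+q\!\int_S^T E^{q-1}E'\!\int_\O u_t u\,dx\,dt.
\end{align*}
By the uniform contraction the left side is $\ge(1-\theta)\int_S^T E^{q}\|\nabla u\|_2^2\,dt$; the bracketed term is $\le C(E(0))E(S)$ and the $E^{q-1}E'$ term is $\le C(E(0))(E(S)-E(T))\le C(E(0))E(S)$, since $|\int_\O u_t u|\le CE$, $E'\le0$, and $E$ decreases; and the cross memory term is split by Cauchy--Schwarz and Young into a small multiple of $\int_S^T E^{q}\|\nabla u\|_2^2\,dt$ (absorbed) plus $C\int_S^T E^{q}\!\int_0^\infty\|\nabla w(\cdot,s)\|_2^2\mu(s)\,ds\,dt$. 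Since $E\le\frac12\big(\|u_t\|_2^2+\|\nabla u\|_2^2+\int_0^\infty\|\nabla w\|_2^2\mu\,ds\big)$ (again by the contraction), establishing \eqref{star-plan} reduces to controlling the memory energy $\int_S^T E^{q}\!\int_0^\infty\|\nabla w\|_2^2\mu\,ds\,dt$, the kinetic term $\int_S^T E^{q}\|u_t\|_2^2\,dt$, and the damping term by $C(E(0))E(S)$ plus a $\delta\int_S^T E^{q+1}\,dt$ absorbable into the left side.

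For the \emph{memory energy}: in Cases~1--2, $\mu'+C\mu\le0$ gives $\int_0^\infty\|\nabla w\|_2^2\mu\,ds\le-\frac1C\int_0^\infty\|\nabla w\|_2^2\mu'\,ds$, so after multiplying by $E^{q}$ and integrating, (\ref{EI-1}) bounds it by $C(E(0))(E(S)-E(T))$. In Cases~3--4, $\mu^{r}\le-\frac1C\mu'$ with $r\in(1,2)$; writing $\|\nabla w\|_2^2\mu=\big(\|\nabla w\|_2^2(-\mu')\big)^{1/r}\big(\|\nabla w\|_2^2\,\mu^{\frac{r}{r-1}}(-\mu')^{-\frac1{r-1}}\big)^{\frac{r-1}{r}}$, using $\mu^{\frac{r}{r-1}}(-\mu')^{-\frac1{r-1}}\le C$ together with the uniform bound $\|\nabla w(t,s)\|_2\le\|\nabla u(t)\|_2+\|\nabla u(t-s)\|_2\le C(E(0),M_0)$ (finite because $M_0<\infty$), H\"older's inequality yields $\int_0^\infty\|\nabla w\|_2^2\mu\,ds\le C\big(\int_0^\infty\|\nabla w\|_2^2(-\mu')\,ds\big)^{1/r}$; interpolating this against the trivially bounded $\int_0^\infty\|\nabla w\|_2^2\mu\,ds$ and feeding the result back into the energy produces the loss $\sigma\in(0,2-r)$, whence the exponent $\frac{\sigma}{r-1}$. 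If $u_0$ is supported on $[-T_0,0]$ then $w(t,s)=u(t)$ for $s\ge t+T_0$, so the tail equals $\|\nabla u(t)\|_2^2\int_{t+T_0}^\infty\mu(s)\,ds\le C(M_0,E(0))\,(1+t)^{-\frac{2-r}{r-1}}$ (using $\mu(s)\le c(1+s)^{-\frac1{r-1}}$, a consequence of $\mu^{r}\le-\frac1C\mu'$), which removes the loss and lifts $\sigma$ to $1$. For the \emph{damping term} I would split $\O=\{|u_t|\le1\}\cup\{|u_t|>1\}$ and, on each piece, apply H\"older in $L^{m+1}$ followed by Young's inequality with the balancing choice $q=\tfrac{m-1}{2}$: this absorbs a $\delta\int_S^T E^{q+1}\,dt$ into the left side and leaves $C_\delta\int_S^T\|u_t\|_{m+1}^{m+1}\,dt\le C_\delta(E(S)-E(T))$ together with a remainder $\lesssim\int_S^T E^{q}\|u_t\|_2^2\,dt$ from the ``small'' set; for $m=1$ the kinetic term is itself $\int_S^T\|u_t\|_{m+1}^{m+1}\,dt$ and is directly dominated by the dissipation. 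Assembling these estimates proves \eqref{star-plan} with $\Phi(s)=s$ in Case~1, $\Phi(s)=s^{1+\frac{m-1}{2}}$ in Case~2, $\Phi(s)=s+s^{1+\frac{r-1}{\sigma}}$ in Case~3, and $\Phi(s)=s^{1+\frac{m-1}{2}}+s^{1+\frac{r-1}{\sigma}}$ in Case~4; the nonlinear integral inequality then converts \eqref{star-plan} into $E(t)\le C(E(0))e^{-\alpha t}$ (linear $\Phi$, $\alpha=\alpha(E(0))$), $E(t)\le C(1+t)^{-2/(m-1)}$, $E(t)\le C(1+t)^{-\sigma/(r-1)}$, and $E(t)\le C(1+t)^{-\max\{2/(m-1),\,\sigma/(r-1)\}}$ respectively, because for $s$ near $0$ a sum of powers behaves like its lowest power.

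The hardest part will be the memory estimate in the polynomially decaying kernel regime (Cases~3--4): when $\mu$ decays only polynomially the memory energy $\int_0^\infty\|\nabla w\|_2^2\mu\,ds$ can be recaptured from the memory dissipation $\int_0^\infty\|\nabla w\|_2^2(-\mu')\,ds$ only up to a loss, and the hypotheses $M_0<\infty$ and (optionally) $\operatorname{supp} u_0\subset[-T_0,0]$ are precisely what makes the H\"older/interpolation step close with the smallest possible $\sigma$; carrying $\sigma$ faithfully through the nonlinear integral inequality, and --- throughout the whole proof --- making sure that every genuinely lower-order contribution is absorbed by the uniform contraction $\|u\|_{p+1}^{p+1}\le\theta\|\nabla u\|_2^2$ rather than by a compactness--uniqueness argument, is the delicate point.
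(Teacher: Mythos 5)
Your overall architecture is the same as the paper's: absorb the source through the potential-well contraction $\|u(t)\|_{p+1}^{p+1}\le\theta\|\nabla u(t)\|_2^2$ with $\theta=\tilde C(E(0))/2<1$ (this is precisely (\ref{w3-1})--(\ref{tildeC}), and it is indeed what removes the compactness--uniqueness step), derive an observability-type estimate relating energy to dissipation via the multiplier $u$, and conclude by a Lasiecka--Tataru nonlinear comparison. Your packaging differs only in form: you use the Komornik weighted inequality $\int_S^T E^{q+1}\,dt\le CE(S)$, whereas the paper proves $tE(t)\le C\,\Phi(\mathbf D(t))$ on $[0,t]$ and iterates $(I+\Phi^{-1})E((n+1)T)\le E(nT)$ against the ODE $S'+(I+\Phi)^{-1}S=0$. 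Cases 1 and 2, and your treatment of the kinetic and damping terms, would go through. But there are two genuine gaps in Cases 3 and 4.

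First, your memory interpolation is wrong as displayed. The H\"older split $\int_0^\infty\|\nabla w\|_2^2\mu\,ds\le C\bigl(\int_0^\infty\|\nabla w\|_2^2(-\mu')\,ds\bigr)^{1/r}$ carries the hidden factor $\bigl(\int_0^\infty\|\nabla w(t,s)\|_2^2\,ds\bigr)^{(r-1)/r}$, and that integral \emph{diverges}: $\|\nabla w(t,s)\|_2$ tends to $\|\nabla u(t)\|_2\neq0$ as $s\to\infty$ and is merely bounded, not integrable in $s$. Interpolating this (false) bound against the bounded quantity $\int_0^\infty\|\nabla w\|_2^2\mu\,ds$ does not repair it. The correct device is the paper's split (\ref{cB-1}): factor $\mu=\mu^{\frac{(1-\sigma)(r-1)}{\sigma+r-1}}\cdot\mu^{\frac{\sigma r}{\sigma+r-1}}$ so that the harmless factor involves $\int_0^\infty\mu(s)^{1-\sigma}\,ds$, which is finite exactly when $\sigma<2-r$ (see (\ref{cB-5})); this is where the restriction $\sigma\in(0,2-r)$, the hypothesis $M_0<\infty$, and the exponent $\frac{\sigma}{\sigma+r-1}$ (hence the rate $\frac{\sigma}{r-1}$) actually come from. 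Second, the optimal rate $\frac{1}{r-1}$ under $\operatorname{supp}u_0\subset[-T_0,0]$ does not follow from your single tail estimate: the tail $\|\nabla u(t)\|_2^2\int_{t+T_0}^\infty\mu\,ds$ decays only like $(1+t)^{-\frac{2-r}{r-1}}$, which is \emph{slower} than the target $(1+t)^{-\frac{1}{r-1}}$, so by itself it caps rather than lifts the rate. The paper instead (i) uses the algebraic cancellation (\ref{op-2})--(\ref{op-0}), which localizes the \emph{combined} memory terms to $s\in[0,\tau+T_0]$ with no residual tail, and (ii) runs a genuine finite bootstrap: the previously established decay $\|\nabla u(\tau)\|_2^2\le C(1+\tau)^{-\sigma_n/(r-1)}$ from (\ref{old-rate1}) is fed back into the memory estimate to produce $\sigma_{n+1}=\sigma_n+\frac{2-r}{2}$, and only after finitely many iterations does one reach $\sigma>r-1$ and then $\frac{1}{r-1}$. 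Your plan omits this iteration entirely, and without it the second halves of Cases 3 and 4 are not proved.
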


\begin{remark}
In Theorem \ref{thm-decay}, for the Cases 1 and 2, the assumption that $\mu'(s)+C\mu(s) \leq 0$ with $\mu(s)>0$ represents that $\mu(s)$ decays to zero exponentially fast, i.e., $0<\mu(s)\leq \mu(0) e^{-Cs}$. However, for the Cases 3 and 4, the assumption that $\mu'(s)+C\mu(s)^r \leq 0$, $r\in (1,2)$, with $\mu(s)>0$ is equivalent to say that $\mu(s)$ decays to zero polynomially fast, i.e., 
$0<\mu(s)\leq C(1+s)^{-\frac{1}{r-1}}$, where $\frac{1}{r-1}\in (1,\infty).$

\end{remark}

\begin{remark}
For the case $m>5$, the above energy decay results can also be obtained if $u\in L^{\infty}(\mathbb R^+, L^{\frac{3}{2}(m-1)}(\Omega))$. However, in order to have such regularity of solutions, the history value $u_0$ has to be smoother, and a new regularity result needs to be established,  which is beyond the scope of the manuscript.  
\end{remark}

\begin{remark}
For the sake of clarity and conciseness, we study the prototype $|u|^{p-1}u$ of sources, as well as the prototype $|u_t|^{m-1}u_t$ of frictional damping. Nevertheless, all the results in the paper can be readily generalized to accommodate a class of source terms in the form $f(u)$ with $|f'(s)|\leq C(|s|^{p-1}+1)$ and a class of damping terms in the form $g(u_t)$ such that $g$ is continuous and monotone increasing 
with $g(s)s>0$ for all $s>0$, as well as $a|s|^{m+1} \leq g(s)s \leq b|s|^{m+1}$, for all $|s|\geq 1$ and some positive constants $a$ and $b$. We stress that the exact energy decay rate actually depends on the behavior of $g$ near the origin. 
\end{remark}

\vspace{0.05 in}

\section{Proof of global existence of solutions}  \label{pge}
In this section, we prove the theorem of the global existence of solutions stated in Section \ref{sec-main}.
\subsection{Proof of Lemma \ref{lemm-d0}}
\begin{proof}
Let $v\in \mathcal M$. So by (\ref{Nehari}) and (\ref{Iv}), we see that
\begin{align}  \label{g-1}
\mathcal I(v)= \left(\frac{1}{2} - \frac{1}{p+1}    \right) \left(\|\nabla v(0)\|_2^2  + \int_0^{\infty} \|\nabla v(0)-\nabla v(-s)\|_2^2 \mu(s) ds \right).
\end{align}
Also, since $1<p\leq 5$, by (\ref{Nehari}) and (\ref{Sob}), we obtain 
\begin{align}  \label{g-2}
\|\nabla v(0)\|_2^2+   \int_0^{\infty} \|\nabla v(0) - \nabla v(-s)\|_2^2 \mu(s) ds = \|v(0)\|_{p+1}^{p+1} \leq \gamma^{p+1} \|\nabla v(0)\|_2^{p+1}.
\end{align}
Assume $v(0)=0$ in $H^1_0(\Omega)$, then by (\ref{g-2}), we see that $\int_0^{\infty} \|\nabla v(-s)\|_2^2 \mu(s) ds =0 $, i.e., $v=0$ in $L^2_{\mu}(\mathbb R^-, H^1_0(\Omega))$ contradicting $v\in \mathcal M$. Hence, we must have $v(0)\not=0$ in $H^1_0(\Omega)$. Dividing both sides of (\ref{g-2}) by $\|\nabla v(0)\|_2^2$ yields
$1\leq \gamma^{p+1} \|\nabla v(0)\|_2^{p-1}$, that is, $\|\nabla v(0)\|_2 \geq \gamma^{-\frac{p+1}{p-1}}$. Then by (\ref{g-1}), we obtain that 
$\mathcal I(v)\geq \frac{p-1}{2(p+1)} \gamma^{-\frac{2(p+1)}{p-1}}$ if $v\in \mathcal M$. It follows that 
\begin{align}   \label{geqq}
\inf_{v\in \mathcal M}\mathcal I(v)\geq \frac{p-1}{2(p+1)} \gamma^{-\frac{2(p+1)}{p-1}}.
\end{align}

Next, we show that $\inf_{v\in \mathcal M}\mathcal I(v) \leq \frac{p-1}{2(p+1)} \gamma^{-\frac{2(p+1)}{p-1}}$. To achieve this, we let $u\in H^1_0(\Omega)$ with $\|\nabla u\|_2=1$, and define $\tilde v \in L^2_{\mu}(\mathbb R^-,H^1_0(\Omega))$ as
\begin{align}   \label{ue-2}
\tilde v(t)=\|u\|_{p+1}^{-\frac{p+1}{p-1}}u, \text{\;\;for all\;\;} t\in \mathbb R^-.
\end{align}
We see that $\tilde v$ is invariant in time, and thus $\tilde v \in L^2_{\mu}(\mathbb R^-, H^1_0(\Omega))$. Indeed,
\begin{align*}
\|\tilde v\|_{L^2_{\mu}(\mathbb R^-, H^1_0(\Omega))}^2 = \int_0^{\infty}  \|\nabla \tilde v (-s)\|_2^2 \mu(s) ds
= \|u\|_{p+1}^{-\frac{p+1}{p-1}}  \|\nabla u\|_2^2 \int_0^{\infty} \mu(s) ds 
\end{align*}
where $\int_0^{\infty} \mu(s) ds  = \int_0^{\infty} - k'(s) ds  = k(0)-k(\infty) =  k(0)-1< \infty$.

Also we claim $\tilde v\in \mathcal M$. Indeed, by virtue of (\ref{ue-2}) and the fact that $\|\nabla u\|_2=1$, we calculate
\begin{align*}
&\|\nabla \tilde v(0)\|_2^2+\int_0^{\infty} \|\nabla \tilde v(0) 
-  \nabla  \tilde v(-s)  \|_2^2  \mu(s) ds \notag\\
&=\|u\|_{p+1}^{-\frac{2(p+1)}{p-1}} \|\nabla u\|_2^2 =  \|u\|_{p+1}^{-\frac{2(p+1)}{p-1}}
=\|\tilde v(0)\|_{p+1}^{p+1}.
\end{align*}
Consequently, we have
\begin{align}    \label{leqq}
\inf_{v\in \mathcal M}\mathcal I(v) \leq \mathcal I(\tilde v)
&= \left(\frac{1}{2}-\frac{1}{p+1}\right)  \| \tilde v(0)\|_{p+1}^{p+1}
=\frac{p-1}{2(p+1)} \|u\|_{p+1}^{-\frac{2(p+1)}{p-1}}  \notag\\
&\leq \frac{p-1}{2(p+1)}  \gamma^{-\frac{2(p+1)}{p-1}},
\end{align}
due to (\ref{gamma}). 

Combining (\ref{geqq}) and (\ref{leqq}), we conclude that $\inf_{v\in \mathcal M}\mathcal I(v)=\frac{p-1}{2(p+1)}  \gamma^{-\frac{2(p+1)}{p-1}}$.

\end{proof}

\subsection{Proof of Theorem \ref{thm-global1}}
\begin{proof}
Recall the total energy $E(t)$ is monotone decreasing due to (\ref{e-decrease}). Then, since $E(0)<d$ and by (\ref{pote}) we obtain
\begin{align}  \label{g-4}
\mathcal I(u^t)=I(t) \leq E(t)\leq E(0) < d, \;\;\text{for all}\;\;  t\in (0,T).
\end{align}
This shows that $u^t \in \mathcal W$ for all $t\in (0,T)$.  

Let $u_0\in \mathcal W_1$. We aim to prove that $u^t \in \mathcal W_1$ for all $t\in (0,T)$. If $u_0=0$, then by system (\ref{1.1}), the unique solution $u(t)$ remains zero for all time, and thus, $u^t\in \mathcal W_1$ for all $t\in (0,T)$. Next, we consider the case $u_0 \not=0$. Under such scenario, we argue by contradiction to show that $u^t\in \mathcal W_1$ for all $t\in (0,T)$. Assume there exists $t_1\in(0,T)$ such that $u^{t_1} \not \in \mathcal W_1$. Since $u^{t_1}\in \mathcal W$, and by virtue of the fact $\mathcal W_1 \cap \mathcal W_2 =\emptyset$ and $\mathcal W_1 \cup \mathcal W_2 = \mathcal W$,
it follows that $u^{t_1}\in \mathcal W_2$, that is, 
\begin{align}  \label{g01}
\|\nabla u(t_1)\|_2^2  + \int_0^{\infty} \|\nabla w(t_1,s)\|_2^2 \mu(s) ds < \|u(t_1)\|_{p+1}^{p+1}.
\end{align}
In addition, since $u_0\in \mathcal W_1$ and $u_0\not =0$, one has
\begin{align}    \label{g02}
\|\nabla u(0)\|_2^2  + \int_0^{\infty} \|\nabla w(0,s)\|_2^2 \mu(s) ds > \|u(0)\|_{p+1}^{p+1}.
\end{align}

By the definition of weak solutions, i.e., Definition \ref{def-weak}, we know that $u\in C([0,T);H^1_0(\Omega))$ and $u_t\in C([0,T);L^2(\Omega))$. Also, by the energy identity (\ref{EI-0}), we see that $\mathscr E(t)$ is continuous. Moreover, (\ref{1.6}) shows
\begin{align*}
\int_0^{\infty} \|\nabla w(t,s)\|_2^2 \mu(s) ds = 2\mathscr E(t) - \|u_t(t)\|_2^2 - \|\nabla u(t)\|_2^2.
\end{align*}
Therefore, $\int_0^{\infty} \|\nabla w(t,s)\|_2^2 \mu(s) ds$ is continuous in $t$.

In addition, by using the fact $u\in C([0,T);H^1_0(\Omega))$, it is readily to derive that $\|u(t)\|_{p+1}$ is also continuous in time. Indeed, for $p\in (1,5]$,
the function $F(y)=|y|^{p+1}$ is $C^1$ with its monotone increasing derivative $F'(y)=(p+1)|y|^{p-1}y$, and therefore, by the mean-value theorem, and the H\"older's inequality, we have, for any $t$, $\tau\in [0,T)$,
\begin{align}   \label{g-10}
\left|  \|u(t)\|_{p+1}^{p+1}  - \|u(\tau)\|_{p+1}^{p+1}   \right|
&=\left|\int_{\Omega} |u(x,t)|^{p+1}-|u(x,\tau)|^{p+1} dx \right|   \notag\\
&\leq C\int_{\Omega}   \left(|u(x,t)|^{p}+    |u(x,\tau)|^{p}\right)   |u(x,t)-u(x,\tau)| dx \notag\\
&\leq C     \left( \|u(t)\|_{\frac{6}{5}p}^{p} +  \|u(\tau)\|_{\frac{6}{5}p}^{p} \right)    \|u(t)-u(\tau)\|_6  \notag\\
&\leq C   \left( \|\nabla u(t)\|_2^{p} +  \|\nabla u(\tau)\|_2^{p} \right)    \|\nabla u(t) - \nabla u(\tau)\|_2,
\end{align}
where we have used $p\in (1,5]$ and the imbedding $H^1_0(\Omega)\hookrightarrow L^6(\Omega)$ in three dimensions. Since $\|\nabla u(t)\|_2$ is continuous on $[0,T)$, we obtain from (\ref{g-10}) that $\|u(t)\|_{p+1}$ is also continuous on $[0,T)$.

As a result, due to the continuity of $\|\nabla u(t)\|_2$, $\int_0^{\infty} \|\nabla w(t,s)\|_2^2 \mu(s) ds$, and $\|u(t)\|_{p+1}$, we may apply the intermediate value theorem to obtain from (\ref{g01}) and (\ref{g02}) that there exists at least one point $\tilde t \in (0,t_1)$ such that
\begin{align}  \label{g-3}
\|\nabla u(\tilde t)\|_2^2  + \int_0^{\infty} \|\nabla w(\tilde t,s)\|_2^2 \mu(s) ds = \|u(\tilde t)\|_{p+1}^{p+1}.
\end{align}
Let $t^*$ be the supremum of all $\tilde t\in (0,t_1)$ such that (\ref{g-3}) holds. Clearly, $t^* \in (0,t_1)$, and (\ref{g-3}) is valid for $\tilde t=t^*$, and $u^t \in \mathcal W_2$ for all $t\in (t^*,t_1]$. 

\emph{Case 1: $u^{t^*}\not=0$}. Then $u^{t^*} \in \mathcal M$. Thus, from Lemma \ref{lemm-d0} and identity (\ref{pote}), we know that 
$I(t^*)=\mathcal I(u^{t^*}) \geq d$, which contradicts (\ref{g-4}). 

\emph{Case 2: $u^{t^*}=0$}. Since $u^t \in \mathcal W_2$ for all $t\in (t^*,t_1]$, we have 
\begin{align}   \label{g-8}
 \|\nabla u(t)\|_2^2  + \int_0^{\infty} \|\nabla w(t,s)\|_2^2 \mu(s) ds < \|u(t)\|_{p+1}^{p+1} \leq \gamma^{p+1} \|\nabla u(t)\|_2^{p+1}, 
 \end{align} 
for all $t\in (t^*,t_1]$, where we have used (\ref{Sob}). Clearly, from (\ref{g-8}), one has $\|\nabla u(t)\|_2>0$ for any $t\in (t^*,t_1]$. Thus, we may divide both sides of (\ref{g-8}) by $\|\nabla u(t)\|_2^2$ to obtain that 
$1< \gamma^{p+1} \|\nabla u(t)\|_2^{p-1}$, which implies $\|\nabla u(t)\|_2>\gamma^{-\frac{p+1}{p-1}}>0$ for all $t\in (t^*,t_1]$, and thus by the continuity, one has
$\|\nabla u(t^*)\|_2 \geq \gamma^{-\frac{p+1}{p-1}}>0$ which contradicts $u^{t^*}=0$.

This completes the proof for the claim that $u^t \in \mathcal W_1$ for all $t\in (0,T)$. Therefore, $\|u(t)\|_{p+1}^{p+1} < \|\nabla u(t)\|_2^2 + \int_0^{\infty} \|\nabla w(t,s)\|_2^2 \mu(s) ds   
\leq 2 \mathscr E(t)$ for all $t\in (0,T)$. It follows that
\begin{align*}  
E(t)=\mathscr E(t)-\frac{1}{p+1} \|u(t)\|_{p+1}^{p+1}
>  \mathscr E(t)-\frac{2}{p+1} \mathscr E(t) = \frac{p-1}{p+1} \mathscr E(t) \geq 0.
\end{align*}
The above inequality along with (\ref{g-4}) justifies (\ref{EE}).

It remains to show that $u(t)$ is global in time. By (\ref{g-4}), $I(t)<d$ for all $t\in (0,T)$, that is, 
\begin{align}     \label{g-5}
I(t)=\frac{1}{2} \left(\|\nabla u(t)\|_2^2+\int_0^{\infty} \|\nabla w(t,s)\|_2^2 \mu(s) ds   \right)
-\frac{1}{p+1} \|u(t)\|_{p+1}^{p+1} <d.
\end{align}
Also, we have shown that $u^t \in \mathcal W_1$ for each $t\in (0,T)$, that is,
\begin{align}  \label{g-9}
\|\nabla u(t)\|_2^2  + \int_0^{\infty} \|\nabla w(t,s)\|_2^2 \mu(s) ds > \|u(t)\|_{p+1}^{p+1}.
\end{align}

Combining (\ref{g-5}) and (\ref{g-9}) yields
\begin{align}   \label{g-6}
\left(\frac{1}{2}-\frac{1}{p+1}\right) \|u(t)\|_{p+1}^{p+1} <d, \text{\;\;for all\;\;} t\in (0,T).
\end{align}
Also, from the energy identity (\ref{EI-1}), we see that
\begin{align}  \label{g-7}
\mathscr E(t) \leq E(0)+ \frac{1}{p+1}\|u(t)\|_{p+1}^{p+1}, \text{\;\;for all\;\;} t\in (0,T).
\end{align}
From (\ref{g-6}) and (\ref{g-7}), we conclude that $\mathscr E(t)$ is uniformly bounded for all $t\in (0,T)$.
This implies that the solution is global and the maximal existence time $T=\infty$.
\end{proof}

\vspace{0.1 in}

\section{Proof for the uniform energy decay rates}
This section is devoted for deriving the uniform energy decay rate for solutions of (\ref{1.1}). In particular, we shall prove Theorem \ref{thm-decay}. The first step is to provide a stabilization estimate. 
\subsection{A stabilization estimate}   \label{sec-stab}
For sake of convenience, let $\textbf{D}(t)$ be given by:
\begin{align*}
\textbf{D}(t)=\int_0^t \|u_t(\tau)\|_{m+1}^{m+1} d\tau  - \frac{1}{2} \int_0^t  \int_0^{\infty} \|\nabla w(\tau,s)\|_2^2  \mu'(s) ds d\tau.
\end{align*}
Thus, the energy identity (\ref{EI-1}) can be written in the following compact form:
\begin{align}  \label{EI-1'}
E(t)+\textbf{D}(t)=E(0).
\end{align}
Here, $\textbf{D}(t)$ represents the dissipations from both of the frictional damping and the viscous effect due to the viscoelastic medium.

The following estimate shows how the dissipation $\textbf{D}(t)$ controls the total energy $E(t)$.

\begin{proposition}   \label{prop-pert}
In addition to Assumption \ref{ass}, suppose $1\leq m\leq 5$ and $1<p<5$. Also assume $E(0)<d$ and $u_0\in \mathcal W_1$.
We consider the following two cases:

\emph{Case A}: If $\mu'(s)+C\mu(s)\leq 0$ for all $s\geq 0$, then 
\begin{align*}    
E(t)\leq C(E(0)) \left(\textbf{D}(t)^{\frac{2}{m+1}}   +  \textbf{D}(t) \right),
\end{align*}
for sufficiently large $t$ depending on $E(0)$. 

\emph{Case B}: If $\mu'(s) + C \mu(s)^r \leq 0$ for all $s\geq 0$, where $r\in (1,2)$, and assume $M_0:=\sup_{\tau\in \mathbb R^-}\|\nabla u_0(\tau)\|_{2}<\infty$, then
\begin{align*}   
E(t)\leq C(r,\sigma,M_0,E(0)) \textbf{D}(t)^{\frac{\sigma}{\sigma+r-1}}  +  C(E(0)) \left( \textbf{D}(t)^{\frac{2}{m+1}} + \textbf{D}(t) \right),
\end{align*}
for sufficiently large $t$ depending on $E(0)$, and for any $\sigma \in (0,2-r)$.
\end{proposition}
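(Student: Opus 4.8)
The plan is to establish the stabilization inequality $\mathscr E(T) \lesssim \mathbf{D}(T) + (\text{lower order terms})$ on a sufficiently long time interval by multiplier techniques, and then to absorb the lower-order source contributions using the potential well structure provided by Theorem \ref{thm-global1}. First I would fix a large $T$ and work on $[0,T]$ (or a generic interval $[S,S+T]$), using the equivalence $\frac{p-1}{p+1}\mathscr E(t) \le E(t) \le \mathscr E(t)$ from (\ref{EE}), so that it suffices to bound $\int_0^T E(t)\,dt$ and then $E(T)$ by the dissipation $\mathbf{D}(T)$. The standard route is to test the equation with $u$ itself and with a suitable multiplier adapted to the memory term (the Dafermos-type history variable $w(t,s)$), which produces, after integration by parts in space and time and use of the energy identity (\ref{EI-0})/(\ref{EI-1'}), an inequality of the form
\begin{align*}
\int_0^T \mathscr E(t)\,dt \le C\big(\mathscr E(0)+\mathscr E(T)\big) + C\int_0^T \|u_t\|_2^2\,dt + C\int_0^T \Big(\int_0^\infty \|\nabla w(t,s)\|_2^2\mu(s)ds\Big)dt + (\text{source terms}).
\end{align*}
The kinetic term $\int_0^T\|u_t\|_2^2\,dt$ is controlled by $\int_0^T\|u_t\|_{m+1}^{m+1}\,dt \le \mathbf{D}(T)$ when $m=1$ directly, and when $m>1$ via Hölder on the set where $|u_t|\le 1$ and the boundedness of $\mathscr E$ on the complementary set, giving the exponent $\frac{2}{m+1}$ together with the power $\mathbf{D}(T)^{2/(m+1)}$ (this is where the $\mathbf{D}(t)^{2/(m+1)}+\mathbf{D}(t)$ structure enters). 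For the memory term one splits $\int_0^\infty \|\nabla w(t,s)\|_2^2\mu(s)ds$ according to the sign condition on $\mu'$: in Case A, $\mu'(s)+C\mu(s)\le 0$ gives $\mu(s)\le -\tfrac1C\mu'(s)$ pointwise, so this integral is $\le \tfrac1C\int_0^\infty\|\nabla w(t,s)\|_2^2(-\mu'(s))ds$, which integrated in $t$ is exactly $\le \tfrac2C\mathbf{D}(T)$; in Case B one has only $\mu(s)\le C(-\mu'(s))^{1/r}$ (equivalently $\mu(s)^r \le C(-\mu'(s))$ after adjusting constants), and one uses Hölder in $s$ with the finite measure $\mu(s)ds$, interpolating with the uniform bound $\|\nabla w(t,s)\|_2^2 \le C(\|\nabla u(t)\|_2^2 + M_0^2)$ (here $M_0 = \sup_{\tau\le 0}\|\nabla u_0(\tau)\|_2<\infty$ is used, together with $\|\nabla u(t)\|_2^2 \le C E(0)$ from (\ref{EE})), to extract a factor $\big(\int_0^\infty \|\nabla w(t,s)\|_2^2(-\mu'(s))ds\big)^{\theta}$ with $\theta$ chosen so that integrating in $t$ and applying Hölder once more in $t$ produces the exponent $\frac{\sigma}{\sigma+r-1}$ — the parameter $\sigma\in(0,2-r)$ arising from how much of the interpolation weight one spends, and the loss of a full power being unavoidable since $r>1$.

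Next I would deal with the source term $\int_0^T\int_\Omega |u|^{p-1}u\,u\,dx\,dt = \int_0^T \|u(t)\|_{p+1}^{p+1}\,dt$, which is precisely the term that historically required a compactness-uniqueness argument. Here the novelty is to avoid that: since $u_0\in\mathcal W_1$ and $E(0)<d$, Theorem \ref{thm-global1} guarantees $u^t\in\mathcal W_1$ for all $t$, hence $\|u(t)\|_{p+1}^{p+1} < \|\nabla u(t)\|_2^2 + \int_0^\infty\|\nabla w(t,s)\|_2^2\mu(s)ds \le 2\mathscr E(t)$, and moreover by the Sobolev inequality (\ref{Sob}) combined with the potential-well bound $\|\nabla u(t)\|_2^2 \le \tfrac{2(p+1)}{p-1}E(0)$ one gets $\|u(t)\|_{p+1}^{p+1} \le \gamma^{p+1}\|\nabla u(t)\|_2^{p+1} = \gamma^{p+1}\|\nabla u(t)\|_2^{p-1}\|\nabla u(t)\|_2^2 \le \big(\gamma^{p+1}(\tfrac{2(p+1)}{p-1}E(0))^{(p-1)/2}\big)\|\nabla u(t)\|_2^2$. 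The point is that this coefficient can be made a constant $<2$ — in fact, being in $\mathcal W_1$ with $E(0)<d$ forces $\gamma^{p+1}\|\nabla u(t)\|_2^{p-1} < 1$ (this is exactly the characterization in the proof of Theorem \ref{thm-global1}, see (\ref{g-8})), so $\|u(t)\|_{p+1}^{p+1} \le \big(\gamma^{p+1}\|\nabla u(t)\|_2^{p-1}\big)\|\nabla u(t)\|_2^2$ with the bracket strictly less than $1$ and bounded away from $1$ by a quantity depending on $E(0)$ versus $d$. Consequently the source term, rather than being a genuine perturbation, is a fraction of the "good" elastic energy and can be absorbed into the left-hand side $\int_0^T\mathscr E(t)\,dt$ after choosing $T$ large enough; this is the step that replaces the compactness-uniqueness argument, and it is the conceptual heart of the proof.

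Having obtained $\int_0^T \mathscr E(t)\,dt \le C(E(0))\big(\mathbf{D}(T)^{2/(m+1)} + \mathbf{D}(T)\big)$ in Case A and with the extra additive term $C(r,\sigma,M_0,E(0))\mathbf{D}(T)^{\sigma/(\sigma+r-1)}$ in Case B, the final step is to convert the time-integral bound into a pointwise bound on $E(T)$. Since $E(t)$ is nonincreasing (see (\ref{e-decrease})), $\int_0^T E(t)\,dt \ge T\,E(T)$ would be the naive estimate, but to make $T$ large and still get a clean constant one instead argues on a fixed window: by (\ref{EE}) and the nonincreasing property, $E(T) \le \tfrac1T\int_0^T E(t)\,dt \le \tfrac{1}{T}\cdot\tfrac{p+1}{p-1}\int_0^T\mathscr E(t)\,dt$, and choosing $T$ to be a definite (large, $E(0)$-dependent) constant so that the absorption above is valid yields $E(T) \le C(E(0))\big(\mathbf{D}(T)^{2/(m+1)}+\mathbf{D}(T)\big)$ in Case A, and the analogous inequality with the additional $\mathbf{D}(T)^{\sigma/(\sigma+r-1)}$ term in Case B, which is exactly the claimed stabilization estimate. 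The main obstacle I anticipate is not the multiplier computation itself — which is by now routine modulo care with the memory term — but rather keeping the source-term absorption quantitative: one must verify that the strict inequality defining $\mathcal W_1$ together with $E(0)<d$ yields a uniform gap $\gamma^{p+1}\|\nabla u(t)\|_2^{p-1} \le 1-\delta(E(0)) < 1$ for all $t$, so that the source term can be moved to the left with room to spare; a secondary technical point in Case B is the correct bookkeeping of the two Hölder interpolations (in $s$ against $\mu\,ds$ and in $t$) to land on precisely the exponent $\frac{\sigma}{\sigma+r-1}$ for every admissible $\sigma\in(0,2-r)$, using $M_0<\infty$ to control $\|\nabla w(t,s)\|_2$ uniformly.
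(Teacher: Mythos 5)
Your proposal is correct and follows essentially the same route as the paper: testing the variational identity with $\phi=u$, absorbing the source term $\int_0^t\|u\|_{p+1}^{p+1}\,d\tau$ into the left-hand side $2\int_0^t\mathscr E\,d\tau$ by combining the $\mathcal W_1$-invariance with the Sobolev bound $\gamma^{p+1}\|\nabla u(t)\|_2^{p-1}<1$ forced by $E(0)<d$ (precisely the step that replaces compactness--uniqueness), controlling the kinetic term by $\mathbf D(t)^{2/(m+1)}$ via H\"older, treating the memory term directly in Case A and by the $\sigma$-dependent H\"older interpolation against $M_0$ in Case B, and finally using $tE(t)\le\int_0^t\mathscr E\,d\tau$ and dividing by $t$. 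The minor differences (splitting $|u_t|\lessgtr 1$ rather than a single H\"older with the $(t|\Omega|)^{(m-1)/(m+1)}$ factor, and phrasing the boundary terms as $\mathscr E(0)+\mathscr E(T)$ rather than $2E(t)+\mathbf D(t)$) are cosmetic.
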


\begin{proof}
First we point out that, by Theorem \ref{thm-global1}, $u$ is a global solution.
 
Next we show that $u\in L^{m+1}(\Omega \times (0,t))$ for any $t>0$. In fact, since both $u$ and $u_t$ in $C([0,t];L^2(\Omega))$, we may write
\begin{align*}
\int_0^t \int_{\Omega} |u(\tau)|^{m+1} dx d\tau 
&= \int_0^t \int_{\Omega} \left| \int_0^\tau u' ds + u(0) \right|^{m+1} dx d\tau \notag\\
&\leq C(t) \left(\|u_t\|^{m+1}_{L^{m+1}(\Omega \times (0,t))} + \|u(0)\|_{m+1}^{m+1}\right) < \infty
\end{align*}
due to the assumption $u_0(0)\in L^{m+1}(\Omega)$ and the regularity of solutions: $u_t\in L^{m+1}(\Omega\times (0,t))$. 

Consequently, in Definition \ref{def-weak} of weak solutions, we may set the test function $\phi=u$, since $u$ satisfies the required regularity. Then
\begin{align} \label{w1}
&\int_{\O}u_t(t)u(t) dx -  \int_{\O}u_t(0)u(0) dx   -\int_0^t \|u_t(\tau)\|_2^2 d\t\notag\\
&+k(0)\int_0^t \|\nabla u(\tau)\|_2^2 d\t +\int_0^t \int_0^{\infty} \int_{\O} \nabla u(\t-s) \cdot \nabla u(\t) dx k'(s) ds d\t\notag\\
&+\int_0^t \int_{\O} |u_t(\tau)|^{m-1}u_t(\tau)u(\t) dx d\t =\int_0^t \|u(\tau)\|_{p+1}^{p+1}  d\t.
\end{align}
Since $w(x,\tau,s)=u(x,\tau)-u(x,\tau-s)$, we write
\begin{align}   \label{w2}
&\int_0^t \int_0^{\infty} \int_{\O} \nabla u(\t-s) \cdot \nabla u(\t) dx k'(s) ds d\t \notag\\
&=\int_0^t \int_0^{\infty} \int_{\O} [\nabla u(\t-s)- \nabla u(\tau)] \cdot \nabla u(\t) dx k'(s) ds d\t
+\int_0^t \int_0^{\infty} \int_{\O} |\nabla u(\tau)|^2 dx  k'(s) ds d\tau \notag\\
& = -\int_0^t \int_0^{\infty} \int_{\O} \nabla w(\tau,s) \cdot \nabla u(\t) dx k'(s) ds d\t + (1-k(0)) \int_0^t  \|\nabla u(\tau)\|_2^2  d\tau,
\end{align}
where we used the assumption $k(\infty)=1$.

Substituting (\ref{w2}) into (\ref{w1}) yields
\begin{align}  \label{w3}
&\int_{\O}u_t(t)u(t) dx -  \int_{\O}u_t(0)u(0) dx   -\int_0^t \|u_t(\tau)\|_2^2 d\t \notag\\
&+\int_0^t \|\nabla u(\tau)\|_2^2 d\t -\int_0^t \int_0^{\infty} \int_{\O} \nabla w(\tau,s) \cdot \nabla u(\t) dx k'(s) ds d\t \notag\\
&+\int_0^t \int_{\O} |u_t(\tau)|^{m-1}u_t(\tau)u(\t) dx d\t =\int_0^t \|u(\tau)\|_{p+1}^{p+1}  d\t.
\end{align}
By using the definition (\ref{1.6}) of the quadratic energy $\mathscr E(t)$, we write (\ref{w3}) as
\begin{align}   \label{w4}
2\int_0^t \mathscr E(\tau) d\tau 
&= \int_0^t \norm{u_t(\tau)}_2^2 d\tau + \int_0^t \norm{\nabla u(\tau)}_2^2  d\tau
+ \int_0^t \int_0^{\infty}\norm{\nabla w(\tau,s)}_2^2 \mu(s) ds  d\tau \notag\\
& = -\int_{\O}u_t(t)u(t) dx +  \int_{\O}u_t(0)u(0) dx   + 2\int_0^t \|u_t(\tau)\|_2^2 d\t \notag\\
& \hspace{0.2 in} +\int_0^t \int_0^{\infty}\norm{\nabla w(\tau,s)}_2^2 \mu(s) ds  d\tau
 + \int_0^t \int_0^{\infty} \int_{\O} \nabla w(\tau,s) \cdot \nabla u(\t) dx k'(s) ds d\t   \notag\\
& \hspace{0.2 in} - \int_0^t \int_{\O} |u_t(\tau)|^{m-1}u_t(\tau)u(\t) dx d\t + \int_0^t \|u(\tau)\|_{p+1}^{p+1}  d\t.
\end{align}
Now we estimate each term on the right-hand side of (\ref{w4}).  \\

\vspace{0.1 in}

\textbf{1. Estimate for $\left|\int_0^t \int_0^{\infty} \int_{\O} \nabla w(\tau,s) \cdot \nabla u(\t) dx k'(s) ds d\t \right|$.}

Recall that $\mu(s)=-k'(s)>0$. By applying the Cauchy--Schwarz inequality as well as the Young's inequality, we have
\begin{align}  \label{w5p}
&\left|\int_0^t \int_0^{\infty} \int_{\O} \nabla w(\tau,s) \cdot \nabla u(\t) dx k'(s) ds d\t \right|  \notag\\
&\leq \epsilon \int_0^t \int_0^{\infty} \|\nabla u(\tau)\|_2^2 (-k'(s)) ds d\tau + C_{\epsilon}  \int_0^t \int_0^{\infty} \|\nabla w(\tau,s)\|_2^2 \mu(s) ds d\tau  \notag\\
&\leq 2\epsilon (k(0)-1) \int_0^t \mathscr E(\tau) d\tau  +  C_{\epsilon} \int_0^t \int_0^{\infty} \|\nabla w(\tau,s)\|_2^2   \mu(s) ds d\tau.
\end{align}

\vspace{0.1 in}

\textbf{2. Estimate for $\left|-\int_{\O}u_t(t)u(t) dx +  \int_{\O}u_t(0)u(0) dx \right|$.}

Let us recall the Poincar\'e inequality: $\|u\|_2 \leq C\|\nabla u\|_2$. Thus,
\begin{align*}
\left| \int_{\O}u_t(t)u(t) dx \right| \leq \frac{1}{2}\left(\|u_t(t)\|_2^2 + \|u(t)\|_2^2 \right)
\leq C\left(\|u_t(t)\|_2^2 + \|\nabla u(t)\|_2^2 \right)
\leq C \mathscr E(t).
\end{align*}
Also by (\ref{EE}), we have $\mathscr E(t)\leq \frac{p+1}{p-1}E(t)$ for all $t\geq 0$. It follows that
\begin{align}  \label{w5}
&\left|-\int_{\O}u_t(t)u(t) dx +  \int_{\O}u_t(0)u(0) dx \right| \notag\\
&\leq C(\mathscr E(t)+\mathscr E(0)) \leq C \left(\frac{p+1}{p-1}\right) \left[E(t) + E(0)\right] \leq C \left(\frac{p+1}{p-1}\right) \left[2E(t) + \textbf{D}(t) \right],\end{align}
where we have used the energy identity (\ref{EI-1'}).

\vspace{0.1 in}

\textbf{3. Estimate for $\int_0^t \|u_t(\tau)\|_2^2 d\tau$.}

By using the H\"older's inequality, we have
\begin{align}    \label{w15}
\int_0^t \int_{\O} |u_t(\tau)|^2 dx d\tau \leq \left(t |\Omega|\right)^{\frac{m-1}{m+1}} \left(\int_0^t \int_{\O} |u_t(\tau)|^{m+1} dx d\tau\right)^{\frac{2}{m+1}} \leq t |\Omega|  {\textbf D}(t)^{\frac{2}{m+1}},
\end{align}
for $t\geq 1/|\Omega|$. We remark that this estimate shows that how the dissipation ``controls" the kinetic energy. If $m=1$, then we have the linear estimate  $\int_0^t \|u_t(\tau)\| d\tau  \leq t\Omega  {\textbf D}(t)$; whereas, if the $m>1$, then we have the nonlinear estimate
$\int_0^t \|u_t(\tau)\| d\tau \leq t \Omega     {\textbf D}(t)^{\frac{2}{m+1}}$. Although estimate (\ref{w15}) is fairly easy to derive, it is a crucial element for the determination of the polynomial or the exponential decay rate of the total energy.

\vspace{0.1 in}

\textbf{4. Estimate for} $\int_0^t \|u(\tau)\|_{p+1}^{p+1} d\tau$, $1<p<5$.

The following estimate for the term $\int_0^t \|u(\tau)\|_{p+1}^{p+1} d\tau$ is essential for the whole proof.

From (\ref{EE}), we know that $\mathscr E(t) < \frac{p+1}{p-1} E(t)$. 
Also $E(\tau)\leq E(0)$, for all $\tau\geq 0$. Therefore,
\begin{align}   \label{w3-3}
\|\nabla u(\tau)\|_2^2 \leq 2 \mathscr E(\tau) < \frac{2(p+1)}{p-1} E(\tau) \leq   \frac{2(p+1)}{p-1} E(0), \text{\;\;for all\;\;} \tau\geq 0.
\end{align}

By (\ref{Sob}) and (\ref{w3-3}), we have
\begin{align}      \label{w3-1}
\|u(\tau)\|_{p+1}^{p+1} \leq \gamma^{p+1} \|\nabla u(\tau)\|_2^{p+1} &=    \gamma^{p+1}  \|\nabla u(\tau)\|_2^2    \|\nabla u(\tau)\|_2^{p-1}   \notag\\
&\leq \gamma^{p+1}   2 \mathscr E(\tau)  \left(\frac{2(p+1)}{p-1} E(0)\right)^{\frac{p-1}{2}}.
\end{align}
Since $E(0)< d=\frac{p-1}{2(p+1)}  \gamma^{-\frac{2(p+1)}{p-1}}$, 
one has $\gamma^{p+1}\left( \frac{2(p+1)}{p-1} E(0)\right)^{\frac{p-1}{2}}   <1$. 
As a result,
\begin{align}      \label{w3-2}
\int_0^t \|u(\tau)\|_{p+1}^{p+1}  d\tau \leq  \tilde C(E(0)) \int_0^t  \mathscr E(\tau) d\tau,
\end{align}
where 
\begin{align}    \label{tildeC}
\tilde C(E(0)) :=  2 \gamma^{p+1} \left(\frac{2(p+1)}{p-1} E(0)\right)^{\frac{p-1}{2}} <2.
\end{align}

Notice that the left-hand side of (\ref{w4}) is $2\int_0^t \mathscr E(\tau) d\tau$. Therefore, in (\ref{w3-2}) it is crucial that $\tilde C(E(0))$ is strictly less than 2.

\vspace{0.1 in}

\textbf{5. Estimate for $\left|\int_0^t \int_{\O} |u_t(\tau)|^{m-1}u_t(\tau)u(\tau) dx d\tau \right|$ .}

By using the H\"older's inequality, we obtain
\begin{align}    \label{w5-1}
\left|\int_0^t \int_{\O}  |u_t|^{m-1}u_t u  dx d\tau\right| 
&\leq  \left(\int_0^t \int_{\Omega} |u_t|^{m+1} dx d\tau\right)^{\frac{m}{m+1}}
    \left(\int_0^t \int_{\Omega} |u|^{m+1} dx d\tau\right)^{\frac{1}{m+1}}   \notag\\
&\leq \textbf{D}(t)^{\frac{m}{m+1}}    \|u\|_{L^{m+1}(\Omega\times (0,t))}.
    \end{align}

By the imbedding $H^1(\Omega) \hookrightarrow L^{m+1}(\Omega)$ in three dimensions for $1\leq m\leq 5$, together with Poincar\'e's inequality, as well as estimate (\ref{w3-3}), we deduce 
\begin{align}    \label{w5-2}
\|u\|_{L^{m+1}(\Omega\times (0,t))}^{m+1} \leq C\int_0^t \|\nabla u\|_2^{m+1} d\tau
&=  C\int_0^t \|\nabla u\|_2^{m-1}  \|\nabla u\|^2_2 d\tau  \notag\\
&\leq C\left(\frac{2(p+1)}{p-1}E(0)\right)^{\frac{m-1}{2}}   \int_0^t \mathscr E(\tau) d\tau.
\end{align}
Combining (\ref{w5-1}) and (\ref{w5-2}) yields
\begin{align}    \label{w21}
\left|\int_0^t \int_{\O}  |u_t|^{m-1}u_t u  dx d\tau\right| 
&\leq  C(E(0)) \textbf{D}(t)^{\frac{m}{m+1}} \left(\int_0^t \mathscr E(\tau)  d\tau \right)^{\frac{1}{m+1}}   \notag\\
&\leq \epsilon \int_0^t \mathscr E(\tau)  d\tau + C(\epsilon, E(0)) \textbf{D}(t),
\end{align}
where we have used the Young's inequality.

\vspace{0.1 in}

\textbf{6. Estimate for $\int_0^t \int_0^{\infty} \|\nabla w(\tau,s)\|_2^2 \mu(s) ds d\tau$.}

\emph{Case A:  $\mu'(s) + C\mu(s) \leq 0$}. Since $\mu(s)>0$ for all $s\geq 0$, then $\mu(s)$ decays to zero exponentially fast, that is, $0<\mu(s)\leq \mu(0) e^{-Cs}$. In this case, the estimate is simple.
\begin{align}    \label{w29}
\int_0^t \int_0^{\infty} \|\nabla w(\tau,s)\|_2^2 \mu(s) ds d\tau
\leq -C\int_0^t \int_0^{\infty} \|\nabla w(\tau,s)\|_2^2 \mu'(s) ds d\tau
\leq C\textbf{D}(t).
\end{align}
Notice that the term $\int_0^t \int_0^{\infty} \|\nabla w(\tau,s)\|_2^2 \mu(s) ds d\tau$ in (\ref{w29}) can be considered as the time integral of a potential energy due to the elasticity (see Remark \ref{remk1}), while the term $-\int_0^t \int_0^{\infty} \|\nabla w(\tau,s)\|_2^2 \mu'(s) ds d\tau$ represents the dissipation due to the viscosity (see (\ref{EI-1})). Therefore, inequality (\ref{w29}) can be interpreted as that the viscosity ``controls" the elasticity in the viscoelastic medium.

\vspace{0.1 in}

\emph{Case B: $\mu'(s) + C \mu(s)^r \leq 0$, where $1<r<2$.} 
Since $\mu(s)>0$, then $\mu(s)$ decays polynomially fast, that is, 
$0<\mu(s)\leq C(1+s)^{-\frac{1}{r-1}}$, where $\frac{1}{r-1}\in (1,\infty)$.
Inspired by an idea in \cite{Cav5}, we calculate
\begin{align}    \label{cB-1}
&\int_0^t \int_0^{\infty} \|\nabla w(\tau,s)\|_2^2 \mu(s) ds d\tau  
=\int_0^t \int_0^{\infty} \int_{\Omega} |\nabla w(\tau,s)|^2  \mu(s) dx ds  d\tau \notag\\
&=\int_0^t \int_0^{\infty} \int_{\Omega} \left(|\nabla w(\tau,s)|^{\frac{2(r-1)}{\sigma+r-1}} \mu(s)^{\frac{(1-\sigma)(r-1)}{\sigma+r-1}}\right)
\left(|\nabla w(\tau,s)|^{\frac{2\sigma}{\sigma+r-1}} \mu(s)^{ \frac{\sigma r}{\sigma+r -1}  } \right)dx ds d\tau  \notag\\
&\leq \left(\int_0^t \int_0^{\infty} \int_{\Omega} |\nabla w(\tau,s)|^2  \mu(s)^{1-\sigma} dx ds  d\tau\right)^{\frac{r-1}{\sigma+r-1}}     \notag\\
&\hspace{1 in}\left(\int_0^t \int_0^{\infty} \int_{\Omega} |\nabla w(\tau,s)|^2 \mu(s)^{r} dx ds  d\tau\right)^{\frac{\sigma}{\sigma+r-1}},
\end{align}
where we have used the H\"older's inequality,  $0<\sigma<1$, $1<r<2$.

Since $\mu(s)\leq C(1+s)^{-\frac{1}{r-1}}$, we have
\begin{align}  \label{cB-5}
\int_0^{\infty } \mu(s)^{1-\sigma}ds  \leq C \int_0^{\infty}  (1+s)^{-\frac{1-\sigma}{r-1}}    ds = C(r,\sigma)  <\infty, 
\end{align}
by assuming $\frac{1-\sigma}{r-1}>1$, i.e., $\sigma+r<2$.

By (\ref{w3-3}) and by assuming that $\|\nabla u_0(t)\|_{2}$ is uniformly bounded on $\mathbb R^-$, we see that $u(t)$ is uniformly bounded in $H^1_0(\Omega)$ for all $t\in \mathbb R$. 
By setting $M_0=\sup_{\tau\in \mathbb R^-} \|\nabla u(\tau)\|_2$ and using (\ref{w3-3}), we find that
\begin{align}    \label{cB-7}
\|\nabla u(\tau)\|_2^2 \leq \max \left\{  M_0^2,  \frac{2(p+1)}{p-1} E(0)     \right\}, \text{\;\;for all\;\;} \tau\in \mathbb R.
\end{align}
Recalling $w(\tau,s)=u(\tau)-u(\tau-s)$, and by (\ref{cB-7}), we have
\begin{align}   \label{cB-2}
&\int_0^t \int_0^{\infty} \int_{\Omega} |\nabla w(\tau,s)|^2 \mu(s)^{1-\sigma} dx ds d\tau   \notag\\
&\leq C\sup_{\tau \in\mathbb R} \|\nabla u(\tau)\|_2^2 \int_0^t \int_0^{\infty} \mu(s)^{1-\sigma} ds d\tau \notag\\
&\leq C(M_0,E(0)) t \left(\int_0^{\infty} \mu(s)^{1-\sigma}ds\right) \leq C(r,\sigma,M_0,E(0))t,
\end{align}
where we have used (\ref{cB-5}).

By (\ref{cB-1}) and (\ref{cB-2}), and the assumption $\mu'(s)+C\mu(s)^r \leq 0$, we obtain
\begin{align} \label{cB-6}
&\int_0^t \int_0^{\infty} \|\nabla w(\tau,s)\|_2^2 \mu(s) ds    d\tau \notag\\
&\leq C(r, \sigma,M_0,E(0)) t^{\frac{r-1}{\sigma+r-1}} \left(\int_0^t \int_0^{\infty}  \|\nabla w(\tau,s)\|_2^2 \left(-\mu'(s)\right) ds  d\tau \right)^{\frac{\sigma}{\sigma+r-1}} \notag\\
&\leq C(r, \sigma,M_0,E(0))  t  \textbf{D}(t)^{\frac{\sigma}{\sigma+r-1}},
\end{align}
for $t\geq 1$, where $1<r<2$, $0<\sigma<2-r$.

\vspace{0.1 in}

In the above calculations, we have finished the estimates of all the terms on the right-hand side of (\ref{w4}). Therefore, by applying estimates (\ref{w5p}), (\ref{w5}), (\ref{w15}), (\ref{w3-2}) and (\ref{w21}) into (\ref{w4}), we have 
\begin{align}    \label{w25}
&2 \int_0^t \mathscr E(\tau) d\tau
\leq  \left[   2\epsilon k(0)    + \tilde C(E(0))     \right]   \int_0^t \mathscr E(\tau) d\tau +C_{\epsilon}\int_0^t \int_0^{\infty} \|\nabla w(\tau,s)\|_2^2 \mu(s) ds d\tau   \notag\\
&\hspace{1 in} + C t  \textbf{D}(t)^{\frac{2}{m+1}}   +  C(\epsilon, E(0)) \textbf{D}(t) + C E(t).
\end{align}

By (\ref{tildeC}), we know that $\tilde C(E(0))<2$. So by choosing $\epsilon>0$ sufficiently small depending on $E(0)$, we deduce that
\begin{align}    \label{w25}
\int_0^t \mathscr E(\tau) d\tau \leq & C(E(0)) \left(\int_0^t \int_0^{\infty} \|\nabla w(\tau,s)\|_2^2 \mu(s) ds d\tau + t  \textbf{D}(t)^{\frac{2}{m+1}}   +   \textbf{D}(t) +  E(t) \right).
\end{align}
Notice that $\mathscr E(t) \geq E(t)$ and $E(t)$ is non-increasing for all $t\geq 0$. Thus
\begin{align}  \label{w26}
\int_0^t \mathscr E(\tau) d\tau \geq \int_0^t E(\tau) d\tau \geq t E(t).
\end{align}
Due to (\ref{w25}) and (\ref{w26}), and for  sufficiently large $t$, depending on $E(0)$, we obtain that,
\begin{align}    \label{w27}
t E(t)  \leq &C(E(0)) \left(\int_0^t \int_0^{\infty} \|\nabla w(\tau,s)\|_2^2 \mu(s) ds d\tau + t  \textbf{D}(t)^{\frac{2}{m+1}}   +  \textbf{D}(t) \right).
\end{align}

Next, we consider two different assumptions on the relaxation kernel $\mu(s)$. On the one hand, if $\mu'(s) + C\mu(s)\leq 0$, then by (\ref{w27}) and (\ref{w29}), it follows that
\begin{align}  \label{w30'}
tE(t) \leq C(E(0)) \left(t\textbf{D}(t)^{\frac{2}{m+1}} +\textbf{D}(t) \right),
\end{align}
for sufficiently large $t$.   By dividing both sides of (\ref{w30'}) by $t\geq 1$, one has
\begin{align}  \label{w30}
E(t) \leq C(E(0)) \left(\textbf{D}(t)^{\frac{2}{m+1}} +\textbf{D}(t) \right).
\end{align}

On the other hand, if $\mu'(s) + C \mu(s)^r \leq 0$, where $r\in (1,2)$, then by (\ref{w27}) and (\ref{cB-6}), one has
\begin{align}   \label{w31'}
tE(t)\leq C(E(0)) \left( C(r,\sigma,M_0,E(0)) t\textbf{D}(t)^{\frac{\sigma}{\sigma+r-1}}+      t\textbf{D}(t)^{\frac{2}{m+1}} +\textbf{D}(t) \right),
\end{align}
for $\sigma \in (0,2-r)$ and $t$ sufficiently large. By dividing both sides of (\ref{w31'}) by $t\geq 1$, we obtain
\begin{align}   \label{w31}
E(t)\leq C(r,\sigma,M_0,E(0)) \textbf{D}(t)^{\frac{\sigma}{\sigma+r-1}} + C(E(0))\left(\textbf{D}(t)^{\frac{2}{m+1}} +\textbf{D}(t) \right),
\end{align}
completing the proof for Proposition \ref{prop-pert}.
\end{proof}

\vspace{0.1 in}

\subsection{Derivation of the energy decay rates}  \label{sec-rates}
We employ the method introduced by Lasiecka and Tataru in \cite{LT} to derive the energy decay rates. From the energy identity (\ref{EI-1'}), one has $\textbf{D}(t)=E(0)-E(t)$. By Proposition \ref{prop-pert}, and if $\mu'(s)+C\mu(s)\leq 0$, then
\begin{align} \label{ab-23}
E(t)\leq \Phi(\textbf{D}(t)) = \Phi(E(0)-E(t))
\end{align}
for sufficiently large $t$, where $\Phi(s)=C(E(0))(s^{\frac{2}{m+1}}+s)$. Clearly, $\Phi$ is monotone increasing and vanishing at the origin. Thus, (\ref{ab-23}) implies that 
\begin{align}   \label{ab-23'}
(I+\Phi^{-1})E(t) \leq E(0),  
\end{align}
for $t$ sufficiently large.

If $\mu'(s)+C \mu(s)^r \leq 0$ where $r\in (1,2)$, then for any $\sigma \in (0,2-r)$, one has
\begin{align} \label{ab-24}
E(t)\leq \Psi(\textbf{D}(t)) = \Psi(E(0)-E(t)),
\end{align}
for sufficiently large $t$, where $\Psi(s)=C(r,\sigma,M_0,E(0)) s^{\frac{\sigma}{\sigma+r-1}}+C(E(0))\left(s^{\frac{2}{m+1}}+s\right)$. Then 
\begin{align}   \label{ab-24'}
(I+\Psi^{-1})E(t) \leq E(0),  
\end{align}
for sufficiently large $t$.

Let us first consider the case $\mu'(s)+C\mu(s)\leq 0$. By fixing a sufficiently large time $T$ depending on $E(0)$, we obtain from (\ref{ab-23'}) that $(I+\Phi^{-1})E(T) \leq E(0)$. Since $E(t)$ is monotone decreasing, we can reiterate the estimate $n+1$ times, for any $n=0,1,2,\cdots$, we obtain
\begin{align}   \label{ta1}
\left(I+\Phi^{-1}\right)E((n+1)T) \leq E(nT),     \text{\;\;for all\;\;}   n=0,1,2, \cdots .
\end{align}

By (\ref{ta1}), we shall show that $E(t)$ is bounded by the solution of an ordinary differential equation. The idea is from \cite{LT}.
Indeed, we claim that 
\begin{align}   \label{ta2}
E(nT)\leq S(n) \text{\;\;for all\;\;}   n=0,1,2, \cdots ,
\end{align}
where $S(t)$ is the solution of the initial value problem:
\begin{align}  \label{ta3}
S'(t) + (I+\Phi)^{-1}S(t)=0,      \;\;\; S(0)=E(0).
\end{align}
Claim (\ref{ta2}) can be proved by induction. Since $S(0)=E(0)$, (\ref{ta2}) is valid for $n=0$. Assume $E(nT)\leq S(n)$ for a given $n\geq 0$, we show $E((n+1)T)\leq S(n+1)$.
Clearly, (\ref{ta1}) implies
\begin{align}   \label{ta4}
E((n+1)T) \leq (I+\Phi^{-1})^{-1}E(nT).
\end{align}
Also, since $(I+\Phi)^{-1}$ is monotone increasing on $[0,\infty)$ and vanishing at the origin, we obtain from (\ref{ta3}) that $S(t)$ 
monotonically decreases to zero as $t \rightarrow \infty$. Therefore, by integrating (\ref{ta3}) from $n$ to $n+1$, one has
\begin{align} \label{ta5}
S(n+1) = S(n)-\int_n^{n+1} (I+\Phi)^{-1}S(\tau) d\tau  \geq S(n) - (I+\Phi)^{-1}S(n).
\end{align}
Notice that
\begin{align}    \label{ta6}
(I+&\Phi^{-1})^{-1} =   \Phi \circ  \Phi^{-1}  \circ   (I+\Phi^{-1})^{-1}  =  \Phi  \circ  (\Phi+I)^{-1}      \notag\\
&= (\Phi+ I) \circ  (\Phi+I)^{-1}  -  (\Phi+I)^{-1}  = I -    (\Phi+I)^{-1} .
\end{align}
Applying (\ref{ta6}) to (\ref{ta5}) implies
\begin{align}     \label{ta7}
S(n+1) \geq (I+\Phi^{-1})^{-1} S(n) \geq (I+\Phi^{-1})^{-1}E(nT),
\end{align}
where the last inequality is due to the induction hypothesis as well as the fact that $(I+\Phi^{-1})^{-1}$ is monotone increasing. Combining of (\ref{ta4}) and (\ref{ta7}) yields $E((n+1)T) \leq S(n+1)$. This completes the proof of  claim (\ref{ta2}).

Next we use (\ref{ta2}) and (\ref{ta3}) to calculate the uniform decay rate of the total energy $E(t)$ as $t\rightarrow \infty$. For any $t>T$, there exists $n\in \mathbb N$ such that $t=nT+b$ where $b\in [0,T)$. Thus, $n=\frac{t-b}{T}>\frac{t}{T}-1$. Since $E(t)$ and $S(t)$ are monotone decreasing in time, by applying (\ref{ta2}) one has
\begin{align}   \label{ta8}
E(t) = E(nT+b) \leq E(nT) \leq S(n)  \leq S\left(\frac{t}{T}-1\right), \text{\;\;for any\;\;}  t>T.
\end{align}

If $m=1$, then $\Phi(s)=C(E(0))s$ is linear, then the differential equation (\ref{ta3}) is also linear. Thus $S(t)$ decays to zero exponentially fast. It follows from (\ref{ta8}) that $E(t)$ also decays to zero exponentially fast. That is,
\begin{align*}
E(t) \leq C(E(0))  e^{-\alpha t},  \text{\;\;\;for\;\;} t\geq 0,
\end{align*}
where $\alpha$ depends on $E(0)$.

If $m>1$, then $\Phi(s)=C(E(0))(s^{\frac{2}{m+1}}+s) \leq C(E(0)) s^{\frac{2}{m+1}}$ if $0\leq s\leq 1$, that is to say, for sufficiently small value of $s$, the function $\Phi(s)$ has a polynomial growth rate $\frac{2}{m+1}$. Recall that $S(t)\rightarrow 0$ as $t\rightarrow \infty$. Consequently, by ODE (\ref{ta3}), we obtain that, for $t$ large enough, $S'(t)+C(E(0))S(t)^{\frac{m+1}{2}} \leq 0$ with $S(0)=E(0)$. 
Thus, $S(t) \leq C(E(0)) (1+t)^{-\frac{2}{m-1}}$. It follows from (\ref{ta8}) that the energy $E(t)$ also decays polynomially in time:
\begin{align*} 
E(t)\leq C(E(0)) (1+t)^{-\frac{2}{m-1}}, \text{\;\;\;for\;\;} t\geq 0.
\end{align*}

Notice that  inequalities (\ref{ab-24'}) and (\ref{ab-23'}) have the same form. Therefore, we can employ the exact same strategy as above, to study the case that $\mu'(s)+C\mu(s)^r \leq 0$ for all $s\geq 0$, for some $r\in (1,2)$, and to find the uniform energy decay rate. 
In fact, if $m=1$, then 
$\Psi(s)=C(r,\sigma,M_0,E(0)) s^{\frac{\sigma}{\sigma+r-1}}+C(E(0))s$. Then, we deduce that the energy $E(t)$ decays polynomially in time:
\begin{align} \label{polymu}
E(t)\leq C(r,\sigma,M_0,E(0)) (1+t)^{-\frac{\sigma}{r-1}},  \text{\;\;\;for\;\;} t\geq 0,
\end{align}
for any $\sigma\in (0,2-r)$. However, if $m>1$, then $\Psi(s)=C(r,\sigma,M_0,E(0)) s^{\frac{\sigma}{\sigma+r-1}}+C(E(0))(s^{\frac{2}{m+1}}+s)$ for $r\in (1,2)$ and 
$\sigma\in (0,2-r)$. In this case, we derive that the energy $E(t)$ decays polynomially fast:
\begin{align}    \label{polymu2}
E(t)\leq C(r,\sigma,M_0, E(0)) (1+t)^{-\max\left\{\frac{\sigma}{r-1},\frac{2}{m-1}\right\}},  \text{\;\;\;for\;\;} t\geq 0.
\end{align}

In the next section, by imposing an additional assumption on the history value $u_0$, we shall improve the energy decay rates in (\ref{polymu}) and (\ref{polymu2}) 
when the relaxation kernel $\mu(s)$ satisfying $\mu'(s)+C\mu(s)^r \leq 0$ for all $s\geq 0$, for some $r\in (1,2)$.

\vspace{0.1 in}

\subsection{Optimal polynomial decay rates}
In this subsection, we assume there exists $T_0>0$ such that the history value $u_0$ is supported on $[-T_0,0]$, that is $u_0(t)=0$ for $t\leq -T_0$. When the relaxation kernel $\mu(s)$ satisfies $0<\mu(s)\leq C(1+s)^{-\frac{1}{r-1}}$ for all $s>0$, for some $r\in (1,2)$, we shall demonstrate an algorithm for improving a non-optimal polynomial decay rate of the energy to the optimal one. We say the energy decay rate is \emph{optimal} if it is consistent with the decay rate of the relaxation kernel $\mu(s)$, namely $E(t) \leq C (1+t)^{-\frac{1}{r-1}}$ for all $t\geq 0$. 

To begin with the algorithm, we need an existing decay rate (not optimal): 
\begin{align}     \label{old-rate}
E(t) \leq C   (1+t)^{-\frac{\sigma_1}{r-1}},  \;\;\text{for some}  \; 0<\sigma_1<1 \; \text{with}\; \sigma_1 \not=r-1, \;\;\text{for all}\; t\geq 0.
\end{align}
Notice that the decay rate in (\ref{old-rate}) is guaranteed by (\ref{polymu}) and (\ref{polymu2}) in the previous subsection. Since $\frac{1}{2}\|\nabla u(t)\|_2^2 \leq \mathscr E(t) \leq \frac{p+1}{p-1} E(t)$, we obtain from (\ref{old-rate}) that
\begin{align}  \label{old-rate1}
\|\nabla u(t)\|_2^2 \leq  C(1+t)^{-\frac{\sigma_1}{r-1}}, \;\;\text{for}\;\; t\geq 0.
\end{align}
The essence of the algorithm is that one takes advantage of the polynomial decay of $\|\nabla u(t)\|_2^2$ shown in (\ref{old-rate1}) when performing the estimate in order to obtain an improved energy decay rate $E(t)\leq C (1+t)^{-\frac{\sigma_2}{r-1}}$, with $\sigma_1<\sigma_2\leq 1$. 
We aim to show that by iterating the following procedure finitely many times, one will eventually achieve the optimal polynomial decay rate $E(t) \leq C(1+t)^{-\frac{1}{r-1}}$, for $t\geq 0$. The idea of the algorithm is from \cite{Cav5,LMM,LW}. In particular, we would like to mention a recent work \cite{LW} by Lasiecka and Wang, in which 
optimal energy decay rates are obtained for semilinear abstract second-order equations influenced by a finite memory term with a general relaxation kernel. However, we remark that in \cite{Cav5,LMM,LW} and many other relevant research in the literature, the initial value $u_0$ is defined at an instant $t=0$. Here we have a different setting: the history value $u_0$ is supported on a time interval $[-T_0,0]$ for some $T_0>0$.

Notice that
\begin{align}  \label{op-2}
&|\nabla w(\tau,s)|^2 - \nabla w(\tau,s) \cdot \nabla u(\tau)  \notag\\
&=|\nabla u(\tau) - \nabla u(\tau-s)|^2 - (\nabla u(\tau)- \nabla u(\tau-s)) \cdot \nabla u(\tau) \notag\\
&= |\nabla u(\tau)|^2 - 2 \nabla u(\tau) \cdot \nabla u(\tau-s) + |\nabla u(\tau-s)|^2 -  |\nabla u(\tau)|^2 + \nabla u(\tau-s) \cdot u(\tau)  \notag\\
&= - \nabla u(\tau) \cdot \nabla u(\tau-s)  +   |\nabla u(\tau-s)|^2.
\end{align}
By using (\ref{op-2}) and the assumption that $u_0$ is supported on $[-T_0,0]$, we estimate the following two terms from (\ref{w4}):
\begin{align}    \label{op-0}
&\int_0^t \int_0^{\infty} \|\nabla w(\tau,s)\|_2^2 \mu(s) ds d\tau   - \int_0^t \int_0^{\infty} \int_{\Omega} \nabla w(\tau,s) \cdot \nabla u(\tau) dx \mu(s) ds d\tau  \notag\\
&=\int_0^t \int_0^{\infty} \int_{\Omega}   \left(-\nabla u(\tau) \cdot \nabla u(\tau-s)  +   |\nabla u(\tau-s)|^2\right) dx \mu(s) ds d\tau  \notag\\
&=\int_0^t  \int_0^{\tau+T_0} \int_{\Omega}    \left(-\nabla u(\tau) \cdot \nabla u(\tau-s)  +   |\nabla u(\tau-s)|^2\right) dx \mu(s) ds d\tau  \notag\\
&=\int_0^t \int_0^{\tau+T_0} \|\nabla w(\tau,s)\|_2^2 \mu(s) ds d\tau   - \int_0^t \int_0^{\tau+T_0} \int_{\Omega} \nabla w(\tau,s) \cdot \nabla u(\tau) dx \mu(s) ds d\tau.
\end{align}
Note $\int_0^{\tau+T_0} \mu(s) ds = - \int_0^{\tau+T_0} k'(s) ds = k(0) - k(\tau+T_0) < k(0) -1$ since $k(s)$ is monotone decreasing with $k(\infty)=0$. Then
thanks to Cauchy--Schwarz and Young's inequalities, one has
\begin{align}    \label{op-3}
&\int_0^t \int_0^{\tau+T_0} \int_{\Omega} \nabla w(\tau,s) \cdot \nabla u(\tau) dx \mu(s) ds d\tau \notag\\
&\leq  \epsilon (k(0)-1) \int_0^t \mathscr E(\tau) d\tau + C_{\epsilon}   \int_0^t \int_0^{\tau+T_0} \|\nabla w(\tau,s)\|_2^2 \mu(s) ds d\tau,
\end{align}
where we have used $\|\nabla u(t)\|_2^2 \leq 2 \mathscr E(t)$. By applying (\ref{op-3}) to (\ref{op-0}), we obtain
\begin{align}   \label{op-3'}
&\int_0^t \int_0^{\infty} \|\nabla w(\tau,s)\|_2^2 \mu(s) ds d\tau   - \int_0^t \int_0^{\infty} \int_{\Omega} \nabla w(\tau,s) \cdot \nabla u(\tau) dx \mu(s) ds d\tau \notag\\
&\leq \epsilon (k(0)-1) \int_0^t \mathscr E(\tau) d\tau + C_{\epsilon}   \int_0^t \int_0^{\tau+T_0} \|\nabla w(\tau,s)\|_2^2 \mu(s) ds d\tau.
\end{align}

Now we proceed with our estimate for two different cases regarding the value of $\sigma_1$.

\vspace{0.1 in}

\textbf{\emph{Case I}}:  $r-1<\sigma_1<1$. 

\vspace{0.05 in}

By H\"older's inequality, we have
\begin{align}  \label{op-4}
&\int_0^t \int_0^{\tau+T_0}  \int_{\Omega} |\nabla w(\tau,s)|^2  \mu(s) dx  ds d\tau   \notag\\
&\leq    \left(\int_0^t \int_0^{\tau+T_0}  \int_{\Omega} |\nabla w(\tau,s)|^2  dx  ds d\tau\right)^{\frac{r-1}{r}}
\left(\int_0^t \int_0^{\tau+T_0}  \int_{\Omega} |\nabla w(\tau,s)|^2  \mu(s)^r dx  ds d\tau  \right)^{\frac{1}{r}}.
\end{align}

We estimate
\begin{align}    \label{op-1}
&\int_0^{\tau+T_0}  \|\nabla w(\tau,s)\|_2^2 ds  
= \int_0^{\tau+T_0}  \|\nabla u(\tau)-\nabla u(\tau-s)\|_2^2 ds    \notag\\
&\leq 2\int_0^{\tau+T_0} \|\nabla u(\tau)\|_2^2 ds  + 2\int_0^{\tau+T_0}  \|\nabla u(\tau-s)\|_2^2 ds  \notag\\
&= 2(\tau+T_0) \|\nabla u(\tau)\|_2^2  +   2\int_0^{\tau} \|\nabla u(\tau-s)\|_2^2 ds + 2\int_{\tau}^{\tau+T_0} \|\nabla u(\tau-s)\|_2^2 ds   \notag\\
&=2(\tau+T_0) \|\nabla u(\tau)\|_2^2 +   2\int_0^{\tau} \|\nabla u(s)\|_2^2 ds + 2\int_{-T_0}^0 \|\nabla u_0(s)\|_2^2 ds,
\end{align}
for $\tau\geq 0$. By using the decay estimate $\|\nabla u(\tau)\|_2^2 \leq  C(1+\tau)^{-\frac{\sigma_1}{r-1}}$ from (\ref{old-rate1}) and the assumption $\sigma_1>r-1$, we obtain from (\ref{op-1}) that
\begin{align*}
\int_0^{\tau+T_0}  \|\nabla w(\tau,s)\|_2^2 ds  \leq  C(r,\sigma_1)\left(1 +T_0(1+\tau)^{-\frac{\sigma_1}{r-1}}  \right) + 2\int_{-\infty}^0 \|\nabla u_0(s)\|_2^2 ds, \;\; \text{for}\;\;\tau\geq 0.
\end{align*}
Integrating the above inequality with respect to $\tau$ over $[0,t]$ yields
\begin{align}   \label{op-6}
\int_0^t \int_0^{\tau+T_0}      \|\nabla w(\tau,s)\|_2^2 ds d\tau    \leq   C(r,\sigma_1)(T_0+t)+  2t\int_{-\infty}^0 \|\nabla u_0(s)\|_2^2 ds,
\end{align}
due to the assumption $\sigma_1>r-1$.

Substituting (\ref{op-6}) into (\ref{op-4}) and using the assumption $\mu'(s)+ C\mu(s)^r \leq 0$, we find
\begin{align}    \label{op-8}
&\int_0^t \int_0^{\tau+T_0}  \|\nabla w(\tau,s)\|^2  \mu(s)  ds d\tau  \notag\\
&\leq \left(C(r,\sigma_1)(T_0+t) +   2t\int_{-\infty}^0 \|\nabla u_0(s)\|_2^2 ds\right)^{\frac{r-1}{r}}   \left(\int_0^t \int_0^{T_0}  \|\nabla w(\tau,s)\|^2_2  (-\mu'(s))   ds d\tau  \right)^{\frac{1}{r}} \notag\\
&\leq  \left(C(r,\sigma_1)(T_0+t) +   2t\int_{-\infty}^0 \|\nabla u_0(s)\|_2^2 ds\right) \mathbf D(t)^{\frac{1}{r}},   \;\;\;  \text{for} \;\; t\geq 0,
\end{align}
by letting $C(r,\sigma_1)T_0 \geq 1$.

Applying (\ref{op-8}) to the right-hand side of (\ref{op-3'}) yields
\begin{align}  \label{op-8'}
&\int_0^t \int_0^{\infty} \|\nabla w(\tau,s)\|_2^2 \mu(s) ds d\tau   - \int_0^t \int_0^{\infty} \int_{\Omega} \nabla w(\tau,s) \cdot \nabla u(\tau) dx \mu(s) ds d\tau \notag\\
& \leq  \epsilon (k(0)-1) \int_0^t \mathscr E(\tau) d\tau +   C_{\epsilon} \left(C(r,\sigma_1)(T_0+t)+   t\int_{-\infty}^0 \|\nabla u_0(s)\|_2^2 ds\right) \mathbf D(t)^{\frac{1}{r}}.
\end{align}

Using (\ref{op-8'}) along with estimates (\ref{w5}), (\ref{w15}), (\ref{tildeC}), (\ref{w21}) and (\ref{w26}) derived in subsection \ref{sec-stab}, and by choosing $\epsilon>0$ sufficiently small, we deduce from (\ref{w4}) that
\begin{align*}
t E(t)  \leq C(E(0))    \left[\left(C(r,\sigma_1)(T_0+t)+   t\int_{-\infty}^0 \|\nabla u_0(s)\|_2^2 ds\right) \mathbf D(t)^{\frac{1}{r}}  
+t \mathbf D(t)^{\frac{2}{m+1}}  + \mathbf D(t)  \right],
\end{align*}
for a sufficiently large $t$ depending on $E(0)$. Dividing both sides of the above inequality by $t\geq T_0 \geq 1$, it follows that
\begin{align} \label{op-9}
E(t) \leq  C(E(0))    \left[\left(C(r,\sigma_1) +  \int_{-\infty}^0 \|\nabla u_0(s)\|_2^2 ds\right) \mathbf D(t)^{\frac{1}{r}} +\mathbf D(t)^{\frac{2}{m+1}}  + \mathbf D(t)  \right].
\end{align}

Let us first consider $m=1$. Then employing the same procedure as in the subsection \ref{sec-rates}, we obtain from (\ref{op-9}) that, 
for a sufficiently large $T\geq T_0$, one has 
\begin{align}  \label{opp}
E(t) \leq S\left(\frac{t}{T}-1\right), \;\;\text{for}  \;\; t>T \geq T_0,
\end{align}
where $S(t)$ satisfies the differential equation
\begin{align*}
S'(t) + C S(t)^{r} \leq 0 \;\;\text{with}\;\; S(0)=E(0),\;\; \text{if} \;\; m=1,
\end{align*}
where $C$ depends on $r$, $\sigma_1$, $E(0)$, and $\int_{-\infty}^0 \|\nabla u_0(s)\|_2^2 ds.$ Hence, $S(t)\leq C (1+t)^{-\frac{1}{r-1}}$ for $t\geq 0$, and along with (\ref{opp}),
we obtain the optimal polynomial decay rate
\begin{align*}
E(t) \leq \tilde C(1+t)^{-\frac{1}{r-1}}, \;\;\text{for all} \;\; t \geq 0,
\end{align*}
where $\tilde C$ depend on $r$, $\sigma_1$, $E(0)$, $\int_{-\infty}^0 \|\nabla u_0(s)\|_2^2 ds$ and $T_0$, when $m=1$.

Analogously, if $m>1$, using estimate (\ref{op-9}), we also obtain the optimal polynomial decay rate
\begin{align*}
E(t) \leq \tilde C(1+t)^{-\max\left\{\frac{1}{r-1}, \;\frac{2}{m-1} \right\}}, \;\;\text{for} \;\; t \geq 0,
\end{align*}
where $\tilde C$ depend on $r$, $\sigma_1$, $E(0)$, $\int_{-\infty}^0 \|\nabla u_0(s)\|_2^2 ds$ and $T_0$.

\vspace{0.1 in}
\textbf{\emph{Case II}}: $0<\sigma_1<r-1$.

\vspace{0.05 in}

In this case, we select $\sigma_2>\sigma_1$ such that 
\begin{align} \label{sigma2}
 2-r<\sigma_2<2-r+\sigma_1<1.
 \end{align}
We aim to derive an improved polynomial decay rate $E(t)\leq C (1+t)^{-\frac{\sigma_2}{r-1}}$, for $t\geq 0$.
Thanks to H\"older's inequality, we have
\begin{align}  \label{op-11}
&\int_0^t \int_0^{\tau+T_0}  \int_{\Omega} |\nabla w(\tau,s)|^2  \mu(s) dx  ds d\tau  \notag\\
&\leq \left(\int_0^t \int_0^{\tau+T_0}  \int_{\Omega} |\nabla w(\tau,s)|^2 \mu(s)^{1-\sigma_2}  dx  ds d\tau\right)^{\frac{r-1}{\sigma_2+r-1}}    \notag\\
& \hspace{1 in} \left(\int_0^t \int_0^{\tau+T_0}  \int_{\Omega} |\nabla w(\tau,s)|^2  \mu(s)^r dx  ds d\tau  \right)^{\frac{\sigma_2}{\sigma_2+ r -1}} \notag\\
&\leq    \left(\int_0^t \int_0^{\tau+T_0}  \int_{\Omega} |\nabla w(\tau,s)|^2 \mu(s)^{1-\sigma_2}  dx  ds d\tau\right)^{\frac{r-1}{\sigma_2+r-1}}  
\mathbf D(t)^{\frac{\sigma_2}{\sigma_2+ r -1}},
\end{align}
where we have used $\mu'(s)+C\mu(s)^r \leq 0$ for $s\geq 0$.

Using $\mu(s)\leq C(1+s)^{-\frac{1}{r-1}}$ for all $s\geq 0$, we infer
\begin{align}   \label{op-12}
&\int_0^{\tau+T_0}  \|\nabla w(\tau,s)\|_2^2 \mu(s)^{1-\sigma_2} ds  
= \int_0^{\tau+T_0}  \|\nabla u(\tau)-\nabla u(\tau-s)\|_2^2 \mu(s)^{1-\sigma_2} ds    \notag\\
&\leq C\int_0^{\tau+T_0} \|\nabla u(\tau)\|_2^2  (1+s)^{-\frac{1-\sigma_2}{r-1}}   ds  + C\int_0^{\tau+T_0}  \|\nabla u(\tau-s)\|_2^2 (1+s)^{-\frac{1-\sigma_2}{r-1}}  ds.
\end{align}

Since $2-r<\sigma_2<1$ and using $\|\nabla u(\tau)\|_2^2 \leq  C(1+\tau)^{-\frac{\sigma_1}{r-1}}$ for $\tau\geq 0$ from (\ref{old-rate1}), one has
\begin{align}   \label{op-13}
&\int_0^{\tau+T_0} \|\nabla u(\tau)\|_2^2  (1+s)^{-\frac{1-\sigma_2}{r-1}}ds  
=    \|\nabla u(\tau)\|_2^2   \int_0^{\tau+T_0}  (1+s)^{-\frac{1-\sigma_2}{r-1}}ds  \notag\\
&=C(r,\sigma_2) \left((1+\tau+T_0)^{\frac{r+\sigma_2-2}{r-1}}-1\right) \|\nabla u(\tau)\|_2^2     \notag\\
&\leq C(r,\sigma_2)   (1+\tau+T_0)^{\frac{r+\sigma_2-2}{r-1}}    (1+\tau)^{-\frac{\sigma_1}{r-1}}  \notag\\
&\leq C(r, \sigma_2) \left((1+\tau)^{\frac{r-\sigma_1 +\sigma_2 -2}{r-1}}   +    T_0^{\frac{r+\sigma_2-2}{r-1}} (1+\tau)^{-\frac{\sigma_1}{r-1}} \right),
\;\;\;\text{for}\;\; \tau\geq 0.
\end{align}

Next we estimate the second term on the right-hand side of (\ref{op-12}). Also using $\|\nabla u(\tau)\|_2^2 \leq  C(1+\tau)^{-\frac{\sigma_1}{r-1}}$ for all $\tau\geq 0$, we have
\begin{align}   \label{op-14}
&\int_0^{\tau+T_0}  \|\nabla u(\tau-s)\|_2^2 (1+s)^{-\frac{1-\sigma_2}{r-1}}  ds   \notag\\
&=  \int_0^{\tau}    \|\nabla u(\tau-s)\|_2^2 (1+s)^{-\frac{1-\sigma_2}{r-1}}  ds 
+ \int_{\tau}^{\tau+T_0}    \|\nabla u(\tau-s)\|_2^2 (1+s)^{-\frac{1-\sigma_2}{r-1}}  ds \notag\\
&\leq  C\int_0^{\tau}     (1+\tau-s)^{-\frac{\sigma_1}{r-1}}  (1+s)^{-\frac{1-\sigma_2}{r-1}}   ds 
+ \int_{-T_0}^{0} \|\nabla u_0(s)\| ds,    \;\;\;\text{for}\;\; \tau\geq 0.
\end{align}

Recall $0<\sigma_1<r-1$ and $2-r<\sigma_2<1$. Then we calculate
\begin{align}    \label{op-14'}
&\int_0^{\tau}     (1+\tau-s)^{-\frac{\sigma_1}{r-1}}  (1+s)^{-\frac{1-\sigma_2}{r-1}}   ds   \notag\\
&\leq   \int_0^{\tau/2}    \left(1+\frac{\tau}{2}\right)^{-\frac{\sigma_1}{r-1}}  (1+s)^{-\frac{1-\sigma_2}{r-1}}   ds
+ \int_{\tau/2}^{\tau}   (1+\tau-s)^{-\frac{\sigma_1}{r-1}}  \left(1+\frac{\tau}{2}\right)^{-\frac{1-\sigma_2}{r-1}}   ds  \notag\\
&\leq    C(r,\sigma_2) \left(1+\frac{\tau}{2}\right)^{-\frac{\sigma_1}{r-1}}    \left(1+\frac{\tau}{2}\right)^{\frac{r+\sigma_2-2}{r-1}} 
+ C(r,\sigma_1)   \left(1+\frac{\tau}{2}\right)^{-\frac{1-\sigma_2}{r-1}}  \left(1+\frac{\tau}{2}\right)^{\frac{r-1-\sigma_1}{r-1}} \notag\\
&\leq C(r,\sigma_1,\sigma_2)   (1+\tau)^{\frac{r-\sigma_1+\sigma_2-2}{r-1}},      \;\;\;\text{for}\;\; \tau\geq 0.
\end{align}

Substituting (\ref{op-14'}) into (\ref{op-14}), we find
\begin{align}    \label{op-14''}
&\int_0^{\tau+T_0}  \|\nabla u(\tau-s)\|_2^2 (1+s)^{-\frac{1-\sigma_2}{r-1}}  ds     \notag\\
&\leq  C(r,\sigma_1,\sigma_2) (1+\tau)^{\frac{r-\sigma_1+\sigma_2-2}{r-1}} + \int_{-\infty}^{0} \|\nabla u_0(s)\| ds,
 \;\;\;\text{for}\;\; \tau\geq 0.
\end{align}  

Now, applying (\ref{op-13}) and (\ref{op-14''}) into (\ref{op-12}) yields
\begin{align}  \label{op-15}
&\int_0^{\tau+T_0}  \|\nabla w(\tau,s)\|_2^2 \mu(s)^{1-\sigma_2} ds 
\leq   C(r,\sigma_1,\sigma_2)    (1+\tau)^{\frac{r-\sigma_1+\sigma_2-2}{r-1}}  \notag\\
&+  C(r,\sigma_2) T_0^{\frac{r+\sigma_2-2}{r-1}} (1+\tau)^{-\frac{\sigma_1}{r-1}}   +      \int_{-\infty}^{0} \|\nabla u_0(s)\| ds,  \;\;\;\text{for}\;\; \tau\geq 0.
\end{align}

Recall $2-r<\sigma_2<2-r+\sigma_1$ and $0<\sigma_1<r-1$, then $0<\frac{r-\sigma_1+\sigma_2-2}{r-1}+1<1$ and $0<1-\frac{\sigma_1}{r-1}<1$. 
Also, $0<\frac{r+\sigma_2-2}{r-1}<1$. Therefore, integrating (\ref{op-15}) with respect to $\tau$ over the interval $[0,t]$ yields
\begin{align}   \label{op-17}
&\int_0^t \int_0^{\tau+T_0}  \|\nabla w(\tau,s)\|_2^2 \mu(s)^{1-\sigma_2} ds  d\tau  \notag\\
&\leq     C(r,\sigma_1,\sigma_2)  (1+t)^{\frac{r-\sigma_1+\sigma_2-2}{r-1}+1}   + C(r,\sigma_2) T_0^{\frac{r+\sigma_2-2}{r-1}}(1+t)^{1-\frac{\sigma_1}{r-1}}  
+ t \int_{-\infty}^0 \|\nabla u_0(s)\|_2^2 ds  \notag\\
&\leq     C(r,\sigma_1,\sigma_2) (1+t)  +  C(r,\sigma_2)  T_0 (1+t) +     t \int_{-\infty}^0 \|\nabla u_0(s)\|_2^2 ds,    \;\;\;\text{for}\;\;t\geq 0,
\end{align}  
by letting $T_0\geq 1$.

Substituting (\ref{op-17}) into (\ref{op-11}), we have
\begin{align}  \label{op-18}
&\int_0^t \int_0^{\tau+T_0}  \int_{\Omega} |\nabla w(\tau,s)|^2  \mu(s) dx  ds d\tau   \notag\\
&\leq   \left[C(r,\sigma_1,\sigma_2) (1+t)  +  C(r,\sigma_2)  T_0 (1+t) +     t \int_{-\infty}^0 \|\nabla u_0(s)\|_2^2 ds \right]
\mathbf D(t)^{\frac{\sigma_2}{\sigma_2+ r -1}},  
\end{align}
for $t\geq 0$, where $C(r,\sigma_2) T_0 \geq 1$.

The remaining argument is similar to Case I and we omit the detail. Eventually, we obtain
\begin{align}  
&E(t) \leq C(1+t)^{-\frac{\sigma_2}{r-1}},   \;\;  \text{if}  \;\; m=1;   \label{op-19}\\
&E(t) \leq C(1+t)^{-\max\left\{\frac{\sigma_2}{r-1}, \;\frac{2}{m-1} \right\}},   \;\;    \text{if}  \;\; m>1,  \label{op-20}
\end{align}
for all $t\geq 0$, where $C$ depends on $r$, $E(0)$, $\sigma_1$, $\sigma_2$, $\int_{-\infty}^0 \|\nabla u_0(s)\|_2^2 ds$ and $T_0$.

Recall $2-r<\sigma_2<2-r+\sigma_1<1$ in Case II, so the polynomial decay rates in (\ref{op-19}) and (\ref{op-20}) are not optimal. 
Thus, we need to reiterate the above procedure for finitely many times until the optimal decay rate is achieved. More precisely, if $\sigma_n>r-1$ for some $n\in \mathbb N$, then by Case I, the optimal decay rate can be obtained. However, if $\sigma_n<r-1$ for some $n\in \mathbb N$, then by Case II, we can find $\sigma_{n+1}>\sigma_n$ such that  $\sigma_{n+1}<2-r+\sigma_n$ 
and $E(t)\leq C (1+t)^{-\frac{\sigma_{n+1}}{r-1}}$, for $t\geq 0$.
In particular, we may choose
\begin{align*}
\sigma_{n+1} = \frac{2-r}{2}+\sigma_n =  \frac{(2-r)n}{2}+\sigma_1.
\end{align*}
After finitely many times of iteration, one obtains the optimal polynomial decay rate. This completes the proof of Theorem \ref{thm-decay}.
\vspace{0.1 in}

\begin{remark}
Notice that, if the relaxation kernel $\mu(s)$ decreases polynomially fast, we have obtained the optimal polynomial decay rate for the energy by assuming the history value is supported on an time interval $[-T_0,0]$ for some $T_0>0$. Although $T_0$ can be arbitrarily large, the constant $C$ in the energy decay estimate (see, for instance, (\ref{op-19}) and (\ref{op-20})) depends on $T_0$. So it is not obvious how to improve the polynomial decay rates in (\ref{polymu}) and (\ref{polymu2}) for the infinite memory case.   
\end{remark}

\noindent {\bf Acknowledgment.} Y. Guo would like to acknowledge support by the Simons collaboration grant.   
The authors are grateful to the anonymous reviewers for their valuable comments and suggestions to improve the quality of the paper.

\bibliographystyle{amsplain}

\begin{thebibliography}{10}
\bibitem{ACC}
 Mohammed Aassila, Marcelo M. Cavalcanti, and Val\'eria N. Domingos Cavalcanti,
 \emph{Existence and uniform decay of the wave equation with nonlinear boundary damping and boundary memory source term}, Calc. Var. Partial Differential Equations \textbf{15} (2002), no. 2, 155--180.


\bibitem{ACS}  Fatiha Alabau-Boussouira, Piermarco Cannarsa, and Daniela Sforza, \emph{Decay estimates for second order evolution equations with memory}, J. Funct. Anal. 254 (2008), no. 5, 1342--1372.


\bibitem{ACCRT}
Claudianor O. Alves, Marcelo M. Cavalcanti, Valeria N. Domingos Cavalcanti, Mohammad A. Rammaha, and Daniel Toundykov,
\emph{On existence, uniform decay rates and blow up for solutions of systems of nonlinear wave equations with damping and source terms},
Discrete Contin. Dyn. Syst. Ser. S \textbf{2} (2009), no. 3, 583--608.









\bibitem{BGRT}
Viorel Barbu, Yanqiu Guo, Mohammad~A. Rammaha, and Daniel Toundykov,
  \emph{Convex integrals on {S}obolev spaces}, J. Convex Anal. \textbf{19}
  (2012), no.~3, 837--852.



\bibitem{BM} Said Berrimi and Salim A. Messaoudi, \emph{Existence and decay of solutions of a viscoelastic equation with a nonlinear source}, Nonlinear Anal. 64 (2006), no. 10, 2314--2331.


\bibitem{BL3}
Lorena Bociu and Irena Lasiecka, \emph{Blow-up of weak solutions for the
  semilinear wave equations with nonlinear boundary and interior sources and
  damping}, Appl. Math. (Warsaw) \textbf{35} (2008), no.~3, 281--304.



\bibitem{BL2}
\bysame, \emph{Uniqueness of weak solutions for the semilinear wave equations
  with supercritical boundary/interior sources and damping}, Discrete Contin.
  Dyn. Syst. \textbf{22} (2008), no.~4, 835--860.



\bibitem{BL1}
\bysame, \emph{Local {H}adamard well-posedness for nonlinear wave equations
  with supercritical sources and damping}, J. Differential Equations
  \textbf{249} (2010), no.~3, 654--683.



\bibitem{BRT}
Lorena Bociu, Mohammad Rammaha, and Daniel Toundykov, \emph{On a wave equation with
  supercritical interior and boundary sources and damping terms}, Mathematische
  Nachrichten \textbf{284} (2011), no.~16, 2032--2064.



\bibitem{CCLW} Marcelo M. Cavalcanti, Andr\'e D. D. Cavalcanti, Irena Lasiecka, and Xiaojun Wang, \emph{Existence and sharp decay rate estimates for a von Karman system with long memory}, Nonlinear Anal. Real World Appl. 22 (2015), 289--306.




\bibitem{Cav5}
Marcelo M. Cavalcanti and Higidio P. Oquendo, \emph{Frictional
  versus viscoelastic damping in a semilinear wave equation}, SIAM J. Control
  Optim. \textbf{42} (2003), no.~4, 1310--1324.



\bibitem{Coleman}
Bernard D. Coleman, \emph{Thermodynamics of materials with memory}, Arch. Rational Mech. Anal. \textbf{17} (1964), 1--46.



\bibitem{CN}
Bernard D. Coleman and Walter Noll, \emph{Foundations of linear viscoelasticity}, Rev. Modern Phys. \textbf{33} (1961), 239--249.




\bibitem{Daf2}
Constantine~M. Dafermos, \emph{An abstract {V}olterra equation with
  applications to linear viscoelasticity}, J. Differential Equations \textbf{7}
  (1970), 554--569.



\bibitem{Daf1}
\bysame, \emph{Asymptotic stability in viscoelasticity}, Arch. Rational Mech.
  Anal. \textbf{37} (1970), 297--308.



\bibitem{DPZ}
Francesco Di~Plinio, Vittorino Pata, and Sergey Zelik, \emph{On the strongly
  damped wave equation with memory}, Indiana Univ. Math. J. \textbf{57} (2008),
  no.~2, 757--780.



\bibitem{Fab1}
Mauro Fabrizio and Barbara Lazzari, \emph{On the existence and the asymptotic
  stability of solutions for linearly viscoelastic solids}, Arch. Rational
  Mech. Anal. \textbf{116} (1991), no.~2, 139--152.


\bibitem{FM}
Mauro Fabrizio and Angelo Morro, \emph{Mathematical Problems in Linear Viscoelasticity}, SIAM Studies in Applied Math-
ematics, vol. 12, Society for Industrial and Applied Mathematics (SIAM), Philadelphia, PA, 1992.


\bibitem{G-T}
Vladimir Georgiev and Grozdena Todorova, Existence of a solution of the wave equation with nonlinear damping and source terms, J. Differential Equations \textbf{109} (1994), no. 2, 295--308.




\bibitem{GR}
Yanqiu Guo and Mohammad~A. Rammaha, \emph{Systems of nonlinear wave equations
  with damping and supercritical boundary and interior sources}, Trans. Amer.
  Math. Soc. \textbf{366} (2014), no. ~5, 2265-2325.

\bibitem{GR1}
\bysame, \emph{Blow-up of solutions to systems of nonlinear wave equations with
  supercritical sources}, Appl. Anal. \textbf{92} (2013), no.~6, 1101--1115.






\bibitem{GR2}
\bysame, \emph{Global existence and decay of energy to systems of wave
  equations with damping and supercritical sources}, Z. Angew. Math. Phys.
  \textbf{64} (2013), 621--658.





\bibitem{GRSTT} Yanqiu Guo, Mohammad A. Rammaha, Sawanya Sakuntasathien, Edriss S. Titi, and Daniel Toundykov,
  \emph{Hadamard well-posedness for a hyperbolic equation of viscoelasticity with supercritical sources and damping}, J. Differential Equations \textbf{257} (2014), 3778--3812.



\bibitem{GRS} Yanqiu Guo, Mohammad A. Rammaha, and Sawanya Sakuntasathien, \emph{Blow-up of a hyperbolic equation of viscoelasticity with supercritical nonlinearities}, J. Differential Equations 262 (2017), 1956--1979.


\bibitem{KR} Nicholas J. Kass and Mohammad A. Rammaha, \emph{Local and global existence of solutions to a strongly damped wave equation of the p-Laplacian type},             
Commun. Pure Appl. Anal. 17 (2018), 1449--1478.



\bibitem{LMM}
Irena Lasiecka, Salim A. Messaoudi, and Muhammad I. Mustafa,  \emph{Note on intrinsic decay rates for abstract wave equations with memory},
J. Math. Phys. \textbf{54} (2013), no. 3, 031504, 18 pp.

\bibitem{LT}  Irena Lasiecka and Daniel Tataru, \emph{Uniform boundary stabilization of semilinear wave equations with nonlinear boundary damping}, Differential Integral Equations 6 (1993), no. 3, 507--533. 

\bibitem{LW} Irena Lasiecka and Xiaojun Wang, \emph{Intrinsic decay rate estimates for semilinear abstract second order equations with memory}, New prospects in direct, inverse and control problems for evolution equations, 271--303, Springer INdAM Ser., 10, Springer, Cham, 2014.

\bibitem{Pata}
Vittorino Pata and Adele Zucchi, \emph{Attractors for a damped hyperbolic
  equation with linear memory}, Adv. Math. Sci. Appl. \textbf{11} (2001),
  no.~2, 505--529.


\bibitem{PP} Pei Pei, \emph{Stability of Mindlin-Timoshenko plate with nonlinear boundary damping and boundary sources}, J. Math. Anal. Appl. 448 (2017), no. 2, 1467--1488.



\bibitem{PRT}
 Pei Pei, Mohammad A. Rammaha, and Daniel Toundykov,  \emph{Global well-posedness and stability of semilinear Mindlin-Timoshenko system}, J. Math. Anal. Appl. \textbf{418} (2014), no. 2, 535--568.



\bibitem{PRT2}
 Pei Pei, Mohammad A. Rammaha, and Daniel Toundykov, \emph{Weak solutions and blow-up for wave equations of p-Laplacian type with supercritical sources},
J. Math. Phys. \textbf{56} (2015), no. 8, 081503, 30 pp.

\bibitem{RHN} Michael Renardy, William J. Hrusa, and John A. Nohel, \emph{Mathematical Problems in Viscoelasticity}, Pitman Monographs
and Surveys in Pure and Applied Mathematics, vol. 35, Longman Scientific \& Technical/John Wiley \& Sons, Inc., Harlow/New York, 1987.

\bibitem{PZT}  P. S. Petrov, A. D. Zakharenko, and M. Yu. Trofimov, \emph{The wave equation with viscoelastic attenuation and its application in problems of shallow sea acoustics},  Acoustical Physics \textbf{58} (2012), no. 6, 700--707.


\bibitem{RW} Mohammad A. Rammaha and Zahava Wilstein, \emph{Hadamard well-posedness for wave equations with p-Laplacian damping and supercritical sources}, Adv. Differential Equations 17 (2012), 105--150.


\bibitem{RTW} Mohammad A. Rammaha, Daniel Toundykov, and Zahava Wilstein, \emph{Global existence and decay of energy for a nonlinear wave equation with p-Laplacian damping}, Discrete Contin. Dyn. Syst. 32 (2012), no. 12, 4361--4390.


\end{thebibliography}

\end{document}